%
%
%
%
%
%

\documentclass[a4paper,10pt]{article}
\usepackage{amsfonts}
\usepackage{amsmath}
\usepackage{amsthm}
\usepackage{amssymb}
\baselineskip=24pt

\newcommand{\br}{\mathbb R}

\def\eqldef{\overset{\text{\tiny def}}{=}}

\newtheorem{lemma}{Lemma}
\newtheorem{theorem}{Theorem}
\newtheorem{proposition}{Proposition}
\usepackage{pstcol}
\usepackage{pstricks}



\title{
Global existence for a 3D non-stationary Stokes flow with Coulomb's type friction boundary conditions}

\author{ 
 {Mahdi Boukrouche  
  and  Laetitia Paoli}\thanks{Lyon University, F-42023 
Saint-Etienne,
Institut Camille Jordan CNRS UMR 5208,
23 rue  Paul Michelon 42023 Saint-Etienne Cedex 2, France. 
Corresponding author: laetitia.paoli@univ-st-etienne.fr}
 }

\date{}

\topmargin       -0.0 cm   
\textwidth       13.5 cm 
\textheight  44truecc    
\oddsidemargin    2.54 cm   
\evensidemargin   2.54 cm   

\begin{document}
\maketitle \normalsize

\begin{abstract}
In this paper we study non stationary viscous incompressible fluid flows with nonlinear boundary slip conditions given by a subdifferential property of friction type. More precisely we assume that the tangential velocity vanishes as long as the shear stress remains below a threshold ${\cal F}$, that may depend on the time and the position variables but also on the stress tensor, allowing to consider  Coulomb's type friction laws. An existence and uniqueness theorem is obtained first 
when the threshold ${\cal F}$ is a data and sharp estimates are derived for the velocity and pressure fields as well as for the stress tensor. Then an existence result is proved for the non-local Coulomb's friction case by using a successive approximation technique with respect to the shear stress threshold.
\end{abstract}

\bigskip
\noindent{\bf Keywords:}
Stokes system, generalized dry friction, Tresca's friction law, Coulomb's friction law,  history-dependent boundary condition,
approximation and existence.

\bigskip
\noindent{\bf AMS :}
76D03,76D07, 35K86, 49J40, 35A35

 \renewcommand{\theequation}{1.\arabic{equation}}
 \setcounter{equation}{0}
 
 
  \section{Introduction} \label{intro}

  In the study of fluid flow, it is usually assumed that the fluid sticks to the boundary of the flow domain, leading to the so-called no-slip boundary condition. Such a behaviour has been mathematically justified by considering the microscopic asperities of the boundary (see \cite{richardson, bucur1, bucur2, bucur3, brezina}). Unfortunately experiments show that more complex boundary conditions may occur, especially in the case of non-wetting, hydrophobic or chemically patterned surfaces (\cite{barrat, baudry, tretheway, zhu2, hervet}), leading to linear slip conditions of Navier type (\cite{navier, bonaccurso, zhu1}) or nonlinear slip conditions of friction type when the tangential fluid velocity does not vanish only when a threshold is reached (\cite{magnin}). This last class of boundary conditions  has been considered first for Bingham fluids in \cite{ionescu-sofonea}. Then they have been introduced 
   for incompressible Newtonian  fluid flows by H.Fujita during his lectures at Coll\`ege de France in 1993 (\cite{fujita1}) and subsequently studied by H.Fujita who proved existence and uniqueness for the stationary Stokes problem and by N.Saito  who established some regularity properties for the solutions (\cite{fujita2, fujita4, saito2}). See also \cite{haslinger} for shape optimization issues.
  
   More precisely they consider  boundary conditions given by a subdifferential property, i.e.
\begin{eqnarray*}
\begin{array}{l}
v=0 \quad \hbox{ on} \  \Gamma \setminus \Gamma_0 \ \hbox{(no-slip condition on $\Gamma \setminus \Gamma_0$)}, \\
 v_n = 0   , \quad - \sigma_{t} \in {\cal F} \partial \bigl( |v_t| \bigr) \quad \hbox{ on} \quad \Gamma_0 \ \hbox{(slip condition on $\Gamma_0$)}
 \end{array}
\end{eqnarray*}
where the boundary of the flow domain is splitted into $ \Gamma = \Gamma_0 \cup (\Gamma \setminus \Gamma_0)$, 
${\cal F}$ is a given positive function on $\Gamma_0$, $\partial \bigl(| \cdot | \bigr)$ is the subdifferential of the function $| \cdot |$,
$v_n$, $v_t$ and $\sigma_t$ are the normal component of the velocity,  the tangential component of the velocity and the shear  stress respectively. By using the definition of the subdifferential of a convex function (\cite{rockafellar}), we may rewrite the boundary condition on $\Gamma_0$ as 
\begin{eqnarray*}
 v_n = 0   , \quad |\sigma_t| \le {\cal F}  \quad \hbox{ on} \quad \Gamma_0
\end{eqnarray*}
and  
\begin{eqnarray*} \label{intro.7}
\begin{array}{l}
|\sigma_t| < {\cal F} \Longrightarrow v_t = 0, \\
|\sigma_t| =  {\cal F} \Longrightarrow 
\exists\lambda\geq 0
\ \mbox{\rm s.t.}\  v_{t}=  -\lambda \sigma_{t}
\end{array}
\quad\mbox{\rm on}\quad \Gamma_0 
\end{eqnarray*}
which can be interpreted as a Tresca's friction condition on $\Gamma_0$ (\cite{duvaut2}). 

For the unsteady Stokes problem, existence  has been established by H.Fujita (\cite{fujita3}) when the density of body forces is equal to zero by using the non linear semigroup theory. For regularity properties the reader is referred to  N.Saito and  H.Fujita (\cite{saito1}).

\bigskip

The purpose of this paper is to extend these results to unsteady problems with  non-vanishing external forces and to  more general friction boundary conditions, like Coulomb's friction boundary conditions, where the  threshold ${\cal F}$ may depend on stress tensor $\sigma$. Indeed,  for solids in contact  with a sliding planar  surface $\Gamma_0$, Coulomb established experimentally (\cite{coulomb}) that
\begin{eqnarray*}
 v_n = 0   , \quad |\sigma_t| \le k |\sigma_n|  \quad \hbox{ on} \quad \Gamma_0
\end{eqnarray*}
and  
\begin{eqnarray*} \label{intro.7}
\begin{array}{l}
|\sigma_t| < k |\sigma_n| \Longrightarrow v_t = s, \\
|\sigma_t| =  k |\sigma_n| \Longrightarrow 
\exists\lambda\geq 0
\ \mbox{\rm s.t.}\  v_{t}=  s -\lambda \sigma_{t}
\end{array}
\quad\mbox{\rm on}\quad \Gamma_0 
\end{eqnarray*}
where $\sigma_n$ is the normal component of the stress vector, $s$ is the sliding velocity of the surface and $k>0$ is a friction coefficient.  Hence we get
\begin{eqnarray*}
 v_n = 0   , \quad - \sigma_{t} \in {\cal F} \partial \bigl( |v_t - s| \bigr) \quad \hbox{ on} \quad \Gamma_0
\end{eqnarray*}
with ${\cal F} =  k |\sigma_n|$.

\bigskip

In order to be able to take into account also possible anisotropic friction, we will consider in this paper shear stress thresholds of the form ${\cal F} ={\cal F} (x', t, \sigma)$.
 More precisely we consider a non-stationary Stokes flow described by the system
 \begin{eqnarray}\label{intro.1}
\frac{\partial v}{\partial t} -  div \bigl( 2 \mu D(v) \bigr) + \nabla p =  
f\quad\mbox{\rm in}\quad\Omega\times(0,T),
\end{eqnarray}
\begin{eqnarray}\label{intro.2}
div(v) = 0 \quad\mbox{\rm in}\quad\Omega\times(0,T),
\end{eqnarray}
 with the initial condition
\begin{eqnarray}\label{intro.3}
v(0) = v^{0}\quad\mbox{\rm in}\quad  \Omega.
\end{eqnarray}
Here $[0,T]$ is a given non-trivial time interval, $v$ and $p$ denote respectively the velocity and the pressure of the fluid, $\mu \in \br^*_+$ is its viscosity, $f$ is the density of body forces and $D(v)$ is the strain rate tensor defined as
\begin{eqnarray*}
D(v) = \bigl(d_{ij}(v) \bigr)_{1 \le i,j \le 3}, \quad 
d_{ij} (v) = \frac{1}{2} \left( \frac{\partial v_i}{\partial x_j} +  \frac{\partial v_j}{\partial x_i} \right) \quad 1 \le i,j \le 3.
\end{eqnarray*}
Motivated by lubrication or extrusion/injection problems, we assume that the fluid domain $\Omega$ is given by 
\begin{eqnarray*}
\Omega = \bigl\{(x',x_{3})\in \mathbb{R}^{2}\times\mathbb{R}
:\quad x'\in\omega,\quad 0< x_{3}< h(x')\bigr\},
\end{eqnarray*}
where $\omega$ is a non empty open bounded subset of $\br^{2}$ with a Lipschitz continuous boundary, 
and  $h$ is a  Lipschitz continuous function 
which is bounded from above and from below by some positive real numbers. 
We decompose the boundary of $\Omega$ as $\partial\Omega=\Gamma_0 \cup \Gamma_{L}\cup \Gamma_{1}$,
with
$\Gamma_0=\{(x',x_{3})\in\overline{\Omega}: x_{3}=0\}$, 
$\Gamma_{1}=\{(x',x_{3})\in\overline{\Omega}:
 x_{3}=h(x')\}$ and $\Gamma_{L}$  the lateral part of the boundary. Let us denote by $n=(n_{1}, n_{2}, n_{3})$ the unit outward normal  vector to  
$\partial \Omega$,
and by $u \cdot w$ (resp. $|u|$) the Euclidean inner product (resp. the Euclidean norm) of  vectors $u$ and $w$.
We define  the normal and the tangential velocities on 
$\partial\Omega$ by
\begin{eqnarray*}
v_{n}=v\cdot n = \sum_{i=1}^3 v_{i} n_{i},\quad v_{{t}} = \bigl( v_{{t}i} \bigr)_{1 
\le i \le 3}
\,  \mbox{\rm with} \,  \ v_{{t}i}=v_i -v_{n} n_i  \quad  1 \le i \le 3
\end{eqnarray*}
and the normal and the tangential components of the stress tensor $\sigma = -p {\rm Id} + 2 \mu D(v)$  by
\begin{eqnarray*}
\sigma_{n}  =\sum_{i,j =1}^3 \sigma_{ij} n_{i} n_{j},\quad 
\sigma_{{t}} = \bigl( \sigma_{{t} i}\bigr)_{1 \le i \le 3}
\  \mbox{with} \    \ \sigma_{{t} i}=\sum_{j=1}^3 \sigma_{ij} n_{j}-\sigma_{n} n_{i}  
\quad 1 \le i \le 3.
\end{eqnarray*}
 We introduce a  function $s: \Gamma_0 \to \br^2$ and a function  $g: \partial \Omega \to \mathbb{R}^3$ such that
\begin{eqnarray*}
\int_{\Gamma_{L}}g_n \,d \gamma =0, \quad g=0 \   \mbox{\rm  on }\  \Gamma_1 ,
\quad g_{n} = 0  \ \mbox{\rm  and } g_{t} = g- g_{n} n =(s, 0)  \  \mbox{\rm on } \Gamma_0.
\end{eqnarray*}

We assume that the upper part of the fluid domain is fixed while the lower part is moving with a shear velocity given by $s \zeta (t)$, where $\zeta: [0,T] \to \br$ is such that $\zeta (0) =1$. Then the fluid  velocity satisfies  the following non-homogeneous boundary condition on $\Gamma_1 \cup \Gamma_L$
\begin{eqnarray}\label{intro.4}
v = g \zeta\quad\mbox{on}\quad (\Gamma_1 \cup \Gamma_L) \times(0, T).
\end{eqnarray}
We assume furthermore that the flow satisfies a generalized dry friction law on $\Gamma_0$, i.e. 
\begin{eqnarray}\label{intro.5}
v_{n} =0\quad\mbox{\rm on}\quad \Gamma_0 \times (0,T),
\end{eqnarray}
\begin{eqnarray}\label{intro.6}
|\sigma_t|\le {\cal F} (x',t, \sigma ) \quad \mbox{\rm on}\quad \Gamma_0 \times (0,T)
\end{eqnarray}
and
\begin{eqnarray} \label{intro.7}
\begin{array}{l}
|\sigma_t| < {\cal F} (x',t, \sigma ) \Longrightarrow v_t = (s \zeta, 0), \\
|\sigma_t| =  {\cal F} (x',t, \sigma ) \Longrightarrow 
\exists\lambda\geq 0
\ \mbox{\rm s.t.}\  v_{t}= (s\zeta, 0) -\lambda \sigma_{t}
\end{array}
\quad\mbox{\rm on}\quad \Gamma_0 \times (0,T)
\end{eqnarray}
where ${\cal F}$ is a given non negative mapping on $\Gamma_0 \times (0,T) \times \br^{3 \times 3}$. 



\bigskip

\renewcommand{\theequation}{2.\arabic{equation}}
\setcounter{equation}{0}

\section{Mathematical formulation of the problem} \label{math_formulation}

In order to get a variational formulation of the problem we introduce the following functional spaces
\begin{eqnarray*}
{\cal V}_{0} = \left\{\varphi\in \bigl({ H}^{1}(\Omega)\bigr)^3:\  \varphi=0\ \mbox{on}\
\Gamma_{1}\cup\Gamma_{L}, \ \varphi_n=0\ \mbox{on}\ \Gamma_0\right\},
\end{eqnarray*}
\begin{eqnarray*}
{\cal V}_{0 div} = \bigl\{\varphi\in {\cal V}_0:\ div (\varphi) =0\ \mbox{in}\ 
\Omega\bigr\},
\end{eqnarray*}
 endowed  with the norm of 
  $\bigl( H^1(\Omega)\bigr)^3 $, 
and 
\begin{eqnarray*}
L^{2}_{0}(\Omega) = \left\{q\in  L^{2}(\Omega):\ \int_{\Omega} q\,dx = 
0\right\}
\end{eqnarray*}
endowed with the norm of $L^2(\Omega)$. 
We assume that 
\begin{eqnarray}\label{mu_et_f}
  f\in L^{2}\bigl(0,T; \bigl(L^{2}(\Omega))^3 \bigr),
\end{eqnarray}
\begin{eqnarray}\label{zeta}
\mu \in \br^*_+, \quad \zeta \in {\cal C}^{\infty} \bigl( [0, T], \br \bigr) 
   \mbox{ such that } \zeta (0)=1,
\end{eqnarray}
with $T>0$, and we define
\begin{eqnarray*}
\displaystyle a (u,v)  & = & \int_{\Omega} 2 \mu  D(u) : D(v) \, dx \\
&  = &
\displaystyle \int_{\Omega} 2 \mu \sum_{i,j =1}^3 d_{ij}(u)d_{ij}(v)\,dx \quad \forall (u, v)\in \bigl( {
H}^{1}(\Omega) \bigr)^3 \times \bigl({ H}^{1}(\Omega) \bigr)^3. 
\end{eqnarray*}
From Korn's inequality \cite{Korn} we infer that there exists $\alpha>0$ such that
\begin{eqnarray}\label{coercif}
\alpha \|u\|^2_{ {\bf H}^1 (\Omega)} \le \int_{\Omega} 2 \mu  D(u):D(u) \, dx  
\le  2 \mu \|u\|^2_{ {\bf H}^1 (\Omega)}
\quad \forall u \in  {\cal V}_{0}.
\end{eqnarray}

Moreover, in order to deal with homogeneous boundary conditions on $\Gamma_1 \cup \Gamma_L$, we 
assume  that  there exists an extension of $g$ to $\Omega$, denoted by $G_0$, 
such that
\begin{eqnarray}\label{G_0}
G_{0}\in \bigl( { H}^{2}(\Omega)\bigr)^3, \quad   div  (G_{0})=0 \  \mbox{in}\ \Omega, 
\quad  G_{0}=g\ \mbox{on}\ \partial \Omega,
\end{eqnarray}
and  we let
\begin{eqnarray*}\label{tildev}
 \widetilde{v}=v-G_{0}\zeta.
\end{eqnarray*}
By multiplying (\ref{intro.1}) by a test-function $\varphi \chi$, with $\varphi \in {\cal V}_0$ and $\chi \in {\cal D}(0,T)$, a formal integration by part  
leads to
\begin{eqnarray*}
\begin{array}{l}
\displaystyle - \int_0^T \int_{\Omega} div \bigl( 2 \mu D(v) \bigr) \cdot \varphi \chi \, dx dt + \int_0^T \int_{\Omega}  \nabla p \cdot \varphi \chi \, dx dt  \\
\displaystyle = - \int_0^T \int_{\Omega} div (\sigma) \cdot \varphi \chi  \, dx dt  \\
\displaystyle = \int_0^T \int_{\Omega} 2 \mu \sum_{i,j=1}^3 d_{ij} (v) \frac{\partial \varphi_i}{\partial x_j} \, dx dt - \int_0^T \int_{\Omega} p div(\varphi) \chi \, dx dt  \\
\displaystyle - \int_0^T \int_{\partial \Omega}\sum_{i, j = 1}^3 \sigma_{ij} \varphi_i n_j \chi \, d \gamma dt.
\end{array}
\end{eqnarray*}
Then we infer from (\ref{intro.5})-(\ref{intro.7}) that
\begin{eqnarray*}
\sigma_t \cdot( v - G_0 \zeta) + {\cal F} \bigl(x', t, \sigma \bigr) | v - G_0 \zeta | = 0 \quad \mbox{ on $\Gamma_0 \times (0,T)$}
\end{eqnarray*}
and recalling that $\varphi \in {\cal V}_0$  we get
\begin{eqnarray*}
\begin{array}{l}
\displaystyle \int_0^T \int_{\partial \Omega}\sum_{i, j = 1}^3 \sigma_{ij} \varphi_i n_j \chi \, d \gamma dt 
= \int_0^T \int_{\Gamma_0}  \sigma_{t} \cdot \varphi  \chi \, dx' dt \\
\displaystyle \ge - \int_0^T \int_{\Gamma_0}  {\cal F} \bigl(x', t, \sigma \bigr) |\widetilde v + \varphi \chi | \, dx' dt
+  \int_0^T \int_{\Gamma_0}  {\cal F} \bigl(x', t, \sigma_n \bigr) |\widetilde v  | \, dx' dt.
\end{array}
\end{eqnarray*}
Since we expect $\widetilde v$ to take its values in $L^2 \bigl( 0, T; {\cal V}_{0 div} \bigr)$ these integrals make sense if ${\cal F} (x', t, \sigma) \in L^{2} (0, T; L^{2}(\Gamma_0) \bigr)$.

Under this assumption  we consider the following problem

\noindent {\bf Problem $(P)$}
Find
$$\widetilde{v}\in L^{2} \bigl(0,T;{\cal V}_{0div} \bigr)\cap L^{\infty}\bigl(0,T; \bigl( L^{2}(\Omega) \bigr)^3 \bigr),\  \,
p\in H^{-1}\bigl(0,T ;L^{2}_{0}(\Omega) \bigr)$$
such that, for all $ \varphi\in {\cal V}_{0}$ and for all $\chi\in {\cal D}(0,T)$, we have
\begin{equation}\label{NS-25}
\begin{array}{ll}
\displaystyle
\left\langle \frac{d}{dt} \left( \widetilde{v}, \varphi \right) , \chi\right\rangle_{{\cal D}'(0,T), {\cal D}(0,T)}  
 - \bigl\langle \bigl(p,div(\varphi)  \bigr), \chi \bigr\rangle_{{\cal D}'(0,T), {\cal D}(0,T)} \\
\displaystyle + \int_0^T a(\widetilde{v},\varphi \chi ) \, dt +\Psi_{{\cal F}} (\widetilde{v}+ \varphi \chi )-\Psi_{{\cal F}} (\widetilde{v})
\displaystyle  \geq
\bigl\langle (f,\varphi ), \chi \bigr\rangle_{{\cal D}'(0,T), {\cal D}(0,T)}  \\
\displaystyle -  \int_0^T a(G_{0}\zeta ,\varphi \chi) \, dt  - \left\langle \left(G_{0}\frac{\partial \zeta}{\partial t},\varphi \right),
\chi \right\rangle_{{\cal D}'(0,T ), {\cal D}(0,T)} 
\end{array}
\end{equation}
where $\Psi_{{\cal F}}$ is given by
\begin{eqnarray*}
\Psi_{{\cal F}} (u) =
 \int_0^T \int_{\Gamma_0} {\cal F} ( x', t, \sigma) \bigl| u(x', t) \bigr| \, dx' dt
 \quad \forall u \in L^2 \bigl( 0,T; \bigl( L^2 (\Gamma_0) \bigr)^3 \bigr)
\end{eqnarray*}
together with the initial condition
\begin{eqnarray}\label{NS-25init}
\widetilde{v}(0, \cdot) = \widetilde{v}^{0} = v^0 - G_0 \zeta (0) = v^0 - G_0,
\end{eqnarray}
where
$(\cdot, \cdot)$ denotes the inner product in $\bigl( { L}^2(\Omega) \bigr)^3$. 
Let us emphasize that we identify $\widetilde{v} + \varphi \chi $ and $\widetilde{v} $ with their trace on $\Gamma_0$ in the definition of $\Psi_{{\cal F}}(\widetilde{v}+ \varphi \chi )$ and $\Psi_{{\cal F}}(\widetilde{v})$.

\bigskip

We may observe that, for any solution of problem $(P)$, the stress tensor $\sigma= -p {\rm Id} + 2 \mu D( {\widetilde v} + G_0 \zeta)$  belongs to $H^{-1} \bigl(0,T; \bigl( L^2(\Omega) \bigr)^{3 \times 3} \bigr)$. Thus we can not consider directly the Coulomb's friction case described by ${\cal F}(\cdot, \cdot, \sigma) = k |\sigma_n|$ since $\sigma_n$ is not necessarily well defined on $\Gamma_0$ and $|\sigma_n|$ does not belong to $L^2 \bigl( 0,T; L^2(\Gamma_0) \bigr)$. This kind of difficulty appears also in the study of frictional contact problems in solid mechanics and it has been encompassed by replacing $\sigma_n$ by some regularization $\sigma_n^*$. This idea introduced by G.Duvaut (\cite{duvaut2, duvaut3}) has led to the so-called non-local Coulomb's friction law. The regularization procedure $\sigma_n \mapsto \sigma_n^*$ is built by using a linear continuous operator from $H^{-1/2} ( \partial \Omega)$ to $L^2(\Gamma_0)$ which fits the mechanical meaning of $\sigma_n$ that is defined as the ratio of a force by a surface. Namely, for the static case in solid mechanics, it is easily proved by duality arguments that $\sigma_n$ belongs to  $H^{-1/2} ( \partial \Omega)$ and $\sigma_n^*$ is obtained by convolution of $\sigma_n$ with a smooth non-negative function (\cite{demkowicz}). See also \cite{consiglieri} in the framework of non-Newtonian fluids.

In our case, for the unsteady flow problem, we have to deal with two additional difficulties. Indeed, in order to define the normal trace of $\sigma$ on $\partial \Omega$, we have to establish some regularity properties for $div(\sigma)$. But, if we choose $\varphi \in \bigl({\cal D}(\Omega) \bigr)^3$ and $\chi \in {\cal D} (0,T)$ in (\ref{NS-25}) we get 
\begin{eqnarray*}
&&\left\langle \frac{d}{ d t} \left( \widetilde{v}, \varphi \right) ,\pm \chi\right\rangle_{{\cal D}'(0,T), {\cal D}(0,T)} 
+ \int_0^T a(\widetilde{v}+ G_0 \zeta,\varphi \chi) \, dt  \nonumber\\
&&
- \bigl\langle \bigl(p , div(\varphi)  \bigr), \pm \chi \bigr\rangle_{{\cal D}'(0,T), {\cal D}(0,T)} 
 \nonumber\\
&&
  \geq
\bigl\langle (f,\varphi ), \pm \chi \bigr\rangle_{{\cal D}'(0,T), {\cal D}(0,T)} 
 - \left\langle \left(G_{0} \frac{\partial \zeta}{\partial t},\varphi \right),
 \pm \chi \right\rangle_{{\cal D}'(0,T), {\cal D}(0,T)}  
\end{eqnarray*}
i.e.
\begin{eqnarray*}\label{sigma}
\begin{array}{ll}
\displaystyle 
\left\langle {d \over d t} \left( \widetilde{v}, \varphi \right) , \chi\right\rangle_{{\cal D}'(0,T), {\cal D}(0,T)} 
+ \int_0^{T} \int_{\Omega} \sum_{i, j =1}^3  \sigma_{ij} \frac{\partial \varphi_i}{\partial x_j} \chi \, dx dt
\\
\displaystyle  =
\int_0^ T \int_{\Omega} f \varphi \chi \, dx dt 
 - \int_0^ T \int_{\Omega} G_{0} \frac{\partial \zeta}{\partial t} \varphi \chi \, dx dt .
\end{array}
\end{eqnarray*}
Hence the regularity of $div(\sigma)$ is ``governed'' by  the regularity of $\displaystyle  {\partial  \widetilde{v} \over\partial t}$. Moreover, since $\sigma \in H^{-1} \bigl(0,T; \bigl( L^2(\Omega) \bigr)^{3 \times 3} \bigr)$, we may only expect $\sigma_n$ to belong to $H^{-1} \bigl(0,T; H^{-1/2} (\partial \Omega) \bigr)$. It follows that we will need to regularize $\sigma_n$ not only with respect to the space variable as in \cite{duvaut2, duvaut3, demkowicz} but also with respect to the time variable. Of course, in the case of an evolutionary problem, it will be a non sense to propose a convolution of $\sigma_n$ with respect to the time variable on $[0,T]$ and the most natural regularization seems to replace $\sigma_n$ by $\sigma_n^*$ and then to regularize $\sigma_n^*$ by a kind of truncated convolution on each time interval $[0,t]$, $t \in [0,T]$, leading to a space-time non-local friction law described by 
\begin{eqnarray*}
{\cal F}(x', t, \sigma) = k \int_0^t S(t-s) \bigl| \sigma_n^* (x', s) \bigr| \, ds
\end{eqnarray*}
for almost every $x' \in \Gamma_0$ and for all $t \in [0,T]$, where $S$ is a non negative smooth real function.

Let us emphasize that $S$ can also be interpreted as the kernel of some history-dependent shear stress threshold. Such kind of friction laws have been recently developped in the framework of solid mechanics (see \cite{sofonea} and the references therein for instance).

\bigskip

The outline of our paper is as follows. In the next section we consider the Tresca's case when the  mapping ${\cal F}$ is a given non-negative function of $(x',t)$ and does not depend on $\sigma$. We will establish an existence and uniqueness result for problem $(P)$ as well as some estimates of the solution, by using a Yosida's approximation of the slip condition (Theorem \ref{tresca-existence}). Then, under some compatibility assumptions on the initial velocity $v_0$, we prove additional regularity properties and sharp estimates for $\displaystyle \frac{\partial {\widetilde v}}{\partial t}$, $p$ and $\sigma$ (Theorem \ref{tresca-regularity} and Proposition \ref{proposition1}). Then, in Section \ref{coulomb} we consider the generalized Coulomb's friction case described by 
\begin{eqnarray*}
{\cal F}(x', t, \sigma) = {\cal F}^0 (x',t) + {\cal F}^{\sigma} (x',t) \int_0^t S(t-s) \bigl| \sigma_n^* (x', s) \bigr| \, ds
\end{eqnarray*}
for almost every $x' \in \Gamma_0$ and for all $t \in [0,T]$. If ${\cal F}^{\sigma} \equiv 0$ we recover the Tresca's friction case and when ${\cal F}^0 \equiv 0$ and ${\cal F}^{\sigma} \equiv k$, with $k>0$, we obtain the space-time non-local friction law introduced previously.

For this generalized Coulomb's friction law we prove an existence result by applying a successive approximation technique with respect to the shear stress threshold.


\renewcommand{\theequation}{3.\arabic{equation}}
 \setcounter{equation}{0}

\section{The Tresca's friction case} \label{tresca}

Let us assume from now on that ${\cal F}$ does not depend on its third argument, i.e.
\begin{eqnarray*} 
{\cal F }(x', t, \sigma) = \ell (x',t) \ \mbox{ on } \Gamma_0 \times (0, T), 
\end{eqnarray*}
 with
 \begin{eqnarray} \label{tresca.1}
 \quad \ell \in  L^{2}\bigl(0,T;  L^{2}(\Gamma_0; \br^+)  \bigr) 
  \end{eqnarray}
Let ${\bf H}$ be the closure in $\bigl( L^2 (\Omega) \bigr)^3$ of $\bigl\{ \varphi \in \bigl( {\cal D}(\Omega) \bigr)^3; div (\varphi) = 0 \bigr\}$.

\begin{theorem} \label{tresca-existence}
Let assumptions (\ref{mu_et_f})-(\ref{zeta})-(\ref{G_0})-(\ref{tresca.1}) hold. Then, for all ${\widetilde v}^0 \in {\bf H}$, problem $(P)$ admits an unique solution. Furthermore, $\displaystyle \frac{\partial \widetilde v}{\partial t} \in L^{2} \bigl(0,T; {\cal V}_{0div}' \bigr)$.
\end{theorem}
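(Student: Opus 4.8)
\medskip

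\noindent\textbf{The plan} is to regularize the non-differentiable friction functional $\Psi_{\ell}$ by a Yosida/Moreau approximation and to pass to the limit. First I would reformulate Problem $(P)$ as an evolution variational inequality set in ${\cal V}_{0div}$: using the Hilbert space ${\bf H}$ with its inner product $(\cdot,\cdot)$, and identifying ${\bf H}$ with its dual, one obtains the Gelfand triple ${\cal V}_{0div}\hookrightarrow {\bf H}\hookrightarrow {\cal V}_{0div}'$. Testing (\ref{NS-25}) against divergence-free $\varphi\in{\cal V}_{0div}$ kills the pressure term $(p,\mathrm{div}(\varphi))$, so the pressure can be recovered afterwards via De Rham's theorem; the core problem is to find $\widetilde v\in L^2(0,T;{\cal V}_{0div})\cap L^\infty(0,T;{\bf H})$ with $\partial_t\widetilde v\in L^2(0,T;{\cal V}_{0div}')$ solving
\begin{eqnarray*}
\langle \partial_t\widetilde v,\varphi-\widetilde v\rangle + a(\widetilde v,\varphi-\widetilde v) + \psi_\ell(\varphi)-\psi_\ell(\widetilde v) \ge \langle L(t),\varphi-\widetilde v\rangle
\end{eqnarray*}
for a.e.\ $t$ and all $\varphi\in{\cal V}_{0div}$, where $\psi_\ell(w,t)=\int_{\Gamma_0}\ell(x',t)|w_t|\,dx'$ and $L(t)$ collects $f$, $-a(G_0\zeta,\cdot)$ and $-(G_0\zeta',\cdot)$, with $\widetilde v(0)=\widetilde v^0$.

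\medskip

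\noindent\textbf{Key steps, in order.} (1) For $\e>0$ replace $\psi_\ell$ by a Gateaux-differentiable approximation, e.g.\ $\psi_\ell^\e(w,t)=\int_{\Gamma_0}\ell(x',t)\sqrt{|w_t|^2+\e^2}\,dx'$ (or the Moreau–Yosida regularization of the convex functional), whose derivative $j_\e'$ is Lipschitz and monotone from ${\cal V}_{0div}$ (via the trace on $\Gamma_0$) into ${\cal V}_{0div}'$. (2) Solve the regularized equation $\partial_t\widetilde v_\e + A\widetilde v_\e + j_\e'(\widetilde v_\e) = L(t)$, $\widetilde v_\e(0)=\widetilde v^0$, where $A:{\cal V}_{0div}\to{\cal V}_{0div}'$ is the bounded, coercive operator induced by $a$; existence and uniqueness follow from the standard theory of monotone parabolic equations (Lions), using a Galerkin scheme in ${\cal V}_{0div}$ together with the coercivity (\ref{coercif}) and the monotonicity of $j_\e'$. (3) Derive $\e$-uniform a priori estimates: testing with $\widetilde v_\e$ and using (\ref{coercif}), Cauchy–Schwarz, Young, the trace inequality and Gronwall gives bounds for $\widetilde v_\e$ in $L^\infty(0,T;{\bf H})\cap L^2(0,T;{\cal V}_{0div})$ independent of $\e$; the equation then bounds $\partial_t\widetilde v_\e$ in $L^2(0,T;{\cal V}_{0div}')$ once one checks $j_\e'(\widetilde v_\e)$ is bounded in that space (using $|j_\e'|\le \ell$ pointwise on $\Gamma_0$ and $\ell\in L^2(0,T;L^2(\Gamma_0))$). (4) Extract a weakly(-$*$) convergent subsequence $\widetilde v_\e\rightharpoonup\widetilde v$; by Aubin–Lions the trace on $\Gamma_0$ converges strongly in $L^2(0,T;(L^2(\Gamma_0))^3)$, which lets one pass to the limit in the nonlinear term. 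The convexity of $w\mapsto\psi_\ell(w,t)$ combined with the elementary inequality $\psi_\ell^\e(\widetilde v_\e)\le\psi_\ell(\widetilde v_\e)+\e\|\ell\|_{L^1(\Gamma_0)}$ and $\psi_\ell^\e(\varphi)\ge\psi_\ell(\varphi)$ converts the regularized \emph{equation} into the limiting \emph{inequality} after integration against a nonnegative test function and use of weak lower semicontinuity of $\int_0^T a(\widetilde v,\widetilde v)$ and of $\psi_\ell$. (5) Recover the pressure $p\in H^{-1}(0,T;L^2_0(\Omega))$ from the variational inequality restricted to general $\varphi\in{\cal V}_0$ via De Rham, and verify the initial condition (\ref{NS-25init}) using $\widetilde v\in C([0,T];{\bf H})$ (which follows from $\widetilde v\in L^2(0,T;{\cal V}_{0div})$, $\partial_t\widetilde v\in L^2(0,T;{\cal V}_{0div}')$). (6) Uniqueness: take two solutions, subtract, test with the difference; the monotonicity of the subdifferential term makes $\psi_\ell(\widetilde v_1)-\psi_\ell(\widetilde v_2)+\psi_\ell(\widetilde v_2)-\psi_\ell(\widetilde v_1)\le 0$ cancel favorably, and coercivity plus Gronwall give $\widetilde v_1=\widetilde v_2$, hence also $p_1=p_2$.

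\medskip

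\noindent\textbf{Main obstacle.} I expect the delicate point to be step (4): passing to the limit in the friction term and, more precisely, showing that the limit $\widetilde v$ satisfies the \emph{inequality} form of $(P)$ rather than merely a weak equation. This requires the strong convergence of traces on $\Gamma_0$ (hence a careful Aubin–Lions argument exploiting the compactness of ${\cal V}_{0div}\hookrightarrow{\bf H}$ and the continuity of the trace ${\cal V}_{0div}\to H^{1/2}(\Gamma_0)\hookrightarrow L^2(\Gamma_0)$, the first embedding in the chain being compact) together with a liminf argument for the convex functional $\psi_\ell$ that is uniform in the time-dependent weight $\ell(\cdot,t)$. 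A secondary technical nuisance is the regularity $\partial_t\widetilde v\in L^2(0,T;{\cal V}_{0div}')$: one must make sure the nonlinear boundary term, viewed in ${\cal V}_{0div}'$, stays bounded in $L^2$ in time, which hinges on $\ell\in L^2(0,T;L^2(\Gamma_0))$ and the boundedness of the trace operator — this is exactly why the hypothesis (\ref{tresca.1}) is imposed. The pressure recovery and uniqueness are comparatively routine once these are in hand. \eproof
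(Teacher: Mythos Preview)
Your proposal is correct and follows essentially the same route as the paper: Yosida-type regularization $\sqrt{\varepsilon^2+|\cdot|^2}$ of the friction functional, Galerkin approximation in ${\cal V}_{0div}$, uniform energy estimates plus the bound on $\partial_t\widetilde v_\varepsilon$ in $L^2(0,T;{\cal V}_{0div}')$ via $|j'_\varepsilon|\le\ell$, Aubin--Lions compactness to get strong convergence of the traces on $\Gamma_0$, passage to the limit in the inequality, De Rham's theorem for the pressure, and uniqueness by subtraction. The only organizational difference is that the paper recovers $p_\varepsilon$ already at the $\varepsilon$-level (via the primitive $\pi_\varepsilon$ and a divergence-free lifting argument to extend from ${\cal V}_{0div}$ to ${\cal V}_0$) before letting $\varepsilon\to 0$, whereas you defer the pressure to the very end; both orderings work.
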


\begin{proof}
 For the sake of notational simplicity, let us denote  ${\bf X}$ instead of $X^3$ for any functional space $X$. For any $\varepsilon >0$ we consider the following approximate problem $(P_{\varepsilon})$

\noindent {\bf Problem $(P_{\varepsilon})$}
Find
\begin{eqnarray*}
\widetilde{v}_{\varepsilon}\in L^{2} \bigl(0,T;{\cal V}_{0div} \bigr)\cap L^{\infty}\bigl(0,T; {\bf L}^{2}(\Omega) \bigr),\  \, 
p_{\varepsilon} \in H^{-1}\bigl(0,T;L^{2}_{0}(\Omega) \bigr)
\end{eqnarray*}
such that, for all $ \varphi\in {\cal V}_{0}$ and for all $\chi\in {\cal D}(0,T)$, we have
\begin{equation}\label{NS-25bis}
\begin{array}{ll}
\displaystyle
\left\langle \frac{d}{dt} \left( \widetilde{v}_{\varepsilon}, \varphi \right) , \chi\right\rangle_{{\cal D}'(0,T), {\cal D}(0,T)}  
- \bigl\langle \bigl(p_{\varepsilon},div(\varphi) \bigr), \chi \bigr\rangle_{{\cal D}'(0,T), {\cal D}(0,T)} \\
\displaystyle +\int_0^T a(\widetilde{v}_{\varepsilon},\varphi\chi) \, dt 
+ \int_0^{T} \int_{\Gamma_0} \ell \frac{\widetilde{v}_{\varepsilon} \cdot \varphi}{\sqrt{\varepsilon^{2} + |\widetilde{v}_{\varepsilon}|^{2}}} \chi \,dx' dt
\displaystyle  =
\bigl\langle (f,\varphi ), \chi \bigr\rangle_{{\cal D}'(0,T), {\cal D}(0,T)} \\ 
\displaystyle - \int_0^T a(G_{0}\zeta ,\varphi \chi) \, dt 
\displaystyle
- \left\langle \left(G_{0}\frac{\partial \zeta}{\partial t},\varphi \right), \chi \right\rangle_{{\cal D}'(0,T), {\cal D}(0,T)}
\end{array}
\end{equation}
with the initial condition
\begin{eqnarray}\label{NS-25bisinit}
\widetilde{v}_{\varepsilon}(0, \cdot) = \widetilde{v}^{ 0}  .
\end{eqnarray}

As a first step we solve $(P_{\varepsilon})$ by using Galerkin's method. Recalling that ${\bf H} = \bigl\{ \varphi \in {\bf L}^2(\Omega); \ div(\varphi) =0 \ \mbox{ in $\Omega$}, \ u_n = 0 \ \mbox{ on $\partial \Omega$} \bigr\}$ (see \cite{temam} for instance) we infer that there exists a Hilbertian basis $(w_i)_{i \ge 1}$ of ${\bf H}$ which is orthonormal for the inner product of ${\bf L}^2(\Omega)$, and such that $(w_i)_{i \ge 1}$ is also a Hilbertian basis of ${\cal V}_{0 div}$ which is orthogonal for the inner product of ${\bf H}^1(\Omega)$.
Then, for all $m \ge 1$,  we look for a function $\widetilde{v}_{\varepsilon m}$ given by
\begin{eqnarray*}\label{NS34}
\widetilde{v}_{\varepsilon m}(t,x) = \sum_{j=1}^{m} g_{\varepsilon j} (t)w_{j}(x),\quad  \forall t\in(0,T),\   \forall  x\in\Omega,
\end{eqnarray*}
such that, for all $k \in \{1, \dots, m\}$, we have
\begin{equation}\label{NS35}
\begin{array}{ll}
\displaystyle
\left(\frac{\partial \widetilde{v}_{\varepsilon m}}{\partial t},w_{k}\right)
+a(\widetilde{v}_{\varepsilon m}, w_{k})  
+ \int_{\Gamma_0} \ell \frac{\widetilde{v}_{\varepsilon m} \cdot w_k}{\sqrt{ \varepsilon^2 + |\widetilde{v}_{\varepsilon m}|^2}} \, dx'
\\
\displaystyle =
(f,w_{k}) 
 - a(G_{0} \zeta ,w_{k})
  - \left( G_{0} \frac{\partial \zeta}{\partial t},w_{k} \right) 
\quad \mbox{\rm a.e. in } (0, T)
\end{array}
\end{equation}
with  the initial condition
\begin{eqnarray}\label{NS35-1}
\widetilde{v}_{\varepsilon m}(0, \cdot)=\widetilde{v}_{ m}^{ 0}
\end{eqnarray}
where   $\widetilde{v}_{ m}^{ 0}$ is  the orthogonal projection of $\widetilde{v}^{ 0}$ in $\bigl( L^{2}(\Omega) \bigr)^3$ on  ${\rm Span} \bigl\{w_{1}\ldots w_{m}\bigr\}$.

By using Caratheodory's theorem (see \cite{coddington}), we obtain that (\ref{NS35})-(\ref{NS35-1}) admits 
a unique maximal solution
$\widetilde{v}_{\varepsilon m} \in W^{1,2} (0, \tau_m; {\cal V}_{0 div})$, with $\tau_{m}\in (0, T]$.
 As usual we may establish some a priori estimates independent of $m$ and  $\varepsilon$
 which allow us to extend this solution to the whole interval $[0,T]$.

\begin{lemma}\label{lemma1}
Assume that (\ref{mu_et_f}), (\ref{zeta}), (\ref{G_0}), (\ref{tresca.1})  hold and that ${\widetilde v}^0 \in {\bf H}$.  Then, for all $m \ge 1$, the problem  (\ref{NS35})-(\ref{NS35-1}) admits a unique solution $\widetilde{v}_{\varepsilon m} \in W^{1,2} \bigl(0, T; {\cal V}_{0 div } \bigr)$ which satisfies the following estimates:
\begin{eqnarray}\label{NS36}
\|\widetilde{v}_{\varepsilon m} \|_{L^{\infty}(0,T; {\bf L}^{2}(\Omega))} \leq C_1
\end{eqnarray}
\begin{eqnarray}\label{NS37}
\|\widetilde{v}_{\varepsilon m} \|_{L^{2}(0,T; {\bf H}^1 (\Omega) )} \leq C_1
\end{eqnarray}
where $C_1$ is a positive constant independent of   $m$ and $\varepsilon$.
\end{lemma}


\begin{proof}
We multiply  equation (\ref{NS35}) by $g_{\varepsilon k}(t)$ and we add from $k=1$ to $m$. Observing that $\ell$ is a  non negative mapping we obtain
\begin{eqnarray*}
\begin{array}{ll}
\displaystyle{ \left(\frac{\partial \widetilde{v}_{\varepsilon m} }{\partial t},\widetilde{v}_{\varepsilon m} \right)
+ \int_{\Omega} 2 \mu D(\widetilde{v}_{\varepsilon m} ) : D(\widetilde{v}_{\varepsilon m}) \, dx } \\
\displaystyle{
\le  ( f ,\widetilde{v}_{\varepsilon m} )
- \int_{\Omega} 2 \mu D (G_{0} \zeta) : D ( \widetilde{v}_{\varepsilon m}  ) \, dx
- \left( G_{0} \frac{\partial \zeta}{\partial t}, \widetilde{v}_{\varepsilon m}  \right) } 
\quad \mbox{\rm a.e. in } (0, \tau_m) .
\end{array}
\end{eqnarray*}
We integrate from  $0$ to $s$, with $0<s<\tau_m$. Then, by using (\ref{coercif}) and Young's inequalities, we obtain
\begin{eqnarray*}
\begin{array}{ll}
\displaystyle{\frac{1}{2}\|\widetilde{v}_{\varepsilon m} (s)\|^{2}_{{\bf L}^{2}(\Omega)}
+ \frac{\alpha}{2}\int_{0}^{s}\|\widetilde{v}_{\varepsilon m} \|^{2}_{{\bf H}^{1}(\Omega)}\,dt
\leq\frac{1   }{2}\|\widetilde{v}_{\varepsilon m} (0)\|^{2}_{{\bf L}^{2}(\Omega)} +\frac{1}{2}\int_{0}^{s}\|f\|_{{\bf L}^{2}(\Omega)}^{2}\,dt }\\
\displaystyle{ +\frac{2 \mu^{2}}{\alpha}\|G_{0}\|_{{\bf H}^{1}(\Omega)}^{2}\int_{0}^{s}|\zeta|^{2}\,dt+\frac{1}{2}\|G_{0}\|_{{\bf L}^{2}(\Omega)}^{2}\int_{0}^{s}\left|\frac{\partial \zeta}{\partial t}\right|^{2}\,dt+ \int_{0}^{s}\|\widetilde{v}_{\varepsilon m} \|_{{\bf L}^{2}(\Omega)}^{2}\,dt. }
\end{array}
\end{eqnarray*}

Recalling that $\widetilde{v}_{\varepsilon m } (0)$ is defined as the orthogonal projection of $\widetilde{v}^{ 0} $ in ${\bf L}^{2}(\Omega) $ on  ${\rm Span} \bigl\{w_{1} , \ldots ,  w_{m}\bigr\}$, we have
\begin{eqnarray}
\label{NS36-2}
\displaystyle \frac{1}{2}\|\widetilde{v}_{\varepsilon m} (s)\|^{2}_{{\bf L}^{2}(\Omega)}
+ \frac{\alpha}{2}\int_{0}^{s}\|\widetilde{v}_{\varepsilon m} \|^{2}_{{\bf H}^{1}(\Omega)}\,dt
\leq
 C'_{1} 
 \displaystyle + \int_{0}^{s}\|\widetilde{v}_{\varepsilon m} \|_{{\bf L}^{2}(\Omega)}^{2}\,dt,
\end{eqnarray}
where $C'_{1}$ is given by 
\begin{eqnarray*}
\begin{array}{ll}
\displaystyle C'_{1}=\frac{1}{2} \|\widetilde{v}^0\|^{2}_{{\bf L}^{2}(\Omega)} +\frac{1}{2}\int_{0}^{T}\|f\|_{{\bf L}^{2}(\Omega)}^{2}\,dt +
\frac{2 \mu^{2}}{\alpha}\|G_{0}\|_{{\bf H}^{1}(\Omega)}^{2}\int_{0}^{T}|\zeta|^{2}\,dt \\
\displaystyle 
+ \frac{1}{2}\|G_{0}\|_{{\bf L}^{2}(\Omega)}^{2}\int_{0}^{T}\left|\frac{\partial \zeta}{\partial t}\right|^{2}\,dt
\end{array}
\end{eqnarray*}

With Gr\"onwall's lemma, we get
\begin{eqnarray}\label{estimL2}
\|\widetilde{v}_{\varepsilon m} (s)\|^{2}_{{\bf L}^{2}(\Omega)} \leq 2C'_{1}\exp\left(2s \right)  \le 2C'_{1}\exp\left(2 T \right) \quad \forall s \in [0, \tau_m).
\end{eqnarray}
By definition of the maximal solution, we may conclude  that $\tau_m=T$ and 
 (\ref{NS36}) follows from (\ref{estimL2}). By inserting (\ref{estimL2}) in (\ref{NS36-2}) with $s=T$, we obtain (\ref{NS37}).

\end{proof}

As a corollary, we obtain also a uniform  estimate of $\displaystyle \frac{\partial \widetilde{v}_{\varepsilon m}}{\partial t} $ in $L^{2}(0,\tau;{\cal V}_{0 div}')$.

\begin{lemma} \label{lemma2}
Assume that (\ref{mu_et_f}), (\ref{zeta}),  (\ref{G_0}),  (\ref{tresca.1})  hold and that ${\widetilde v}^0 \in {\bf H}$.  Then there exists a positive real number $C_2$, independent of $m$ and $\varepsilon$, such that
\begin{eqnarray}\label{eq47}
\left\|\frac{\partial \widetilde{v}_{\varepsilon m}}{\partial t} \right\|_{L^{2}(0,T;{\cal V}_{0 div}')}\leq C_2.
\end{eqnarray}
\end{lemma}

\begin{proof}
Let $\varphi \in {\cal V}_{0 div}$. For all $m \ge 1$, we define $\varphi_{m}$ as the orthogonal projection with respect to the  inner product of
${\bf H}^{1}(\Omega)$
of $\varphi$ on   ${\rm Span} \bigl\{w_{1},\ldots,w_{m} \bigr\}$.
With (\ref{NS35}) we get
\begin{eqnarray}
\label{v'43}
\begin{array}{ll}
\displaystyle{   \left(\frac{ \partial \widetilde{v}_{\varepsilon m} }{\partial t} ,\varphi_{m}\right)
= 
- \int_{\Omega} 2 \mu  D(\widetilde{v}_{\varepsilon m} ) : D(\varphi_m) \, dx
- \int_{\Gamma_0} \ell \frac{\widetilde{v}_{\varepsilon m}  \cdot \varphi_m}{\sqrt{\varepsilon^2 + |\widetilde{v}_{\varepsilon m}|^2}} \, dx'}
\\
\displaystyle{ +
(f, \varphi_m)
- \int_{\Omega}  2 \mu D(G_0 \zeta): D (\varphi_m) \, dx
  - \left( G_{0} \frac{\partial \zeta}{\partial t}, \varphi_m \right) 
\quad \mbox{\rm a.e. in } (0, T).}
\end{array}
\end{eqnarray}
We estimate all the terms in the right hand side of the previous equality, we obtain
\begin{eqnarray*}
\begin{array}{ll}
\displaystyle{\left|\left( \frac{\partial \widetilde{v}_{\varepsilon m} }{\partial t},\varphi_{m}\right)\right|
\leq
2 \mu \| \widetilde{v}_{\varepsilon m} \|_{{\bf H}^{1}(\Omega)}\| \varphi_{m}\|_{{\bf H}^{1}(\Omega)}
+\|\ell\|_{{\bf L}^{2}(\Gamma_0)}\|\varphi_{m}\|_{{\bf L}^{2}(\Gamma_0)}
}
\\
\displaystyle{
+ \|f\|_{{\bf L}^{2}(\Omega)}\|\varphi_{m}\|_{{\bf L}^{2}(\Omega)}+ 2 \mu |\zeta|\|G_{0}\|_{{\bf H}^{1}(\Omega)}\|\varphi_{m}\|_{{\bf H}^{1}(\Omega)} } \\
\displaystyle{
+   \left|\frac{\partial\zeta}{\partial t}\right|\|G_{0}\|_{{\bf L}^{2}(\Omega)}\|\varphi_{m}\|_{{\bf L}^{2}(\Omega)} }
 \quad \mbox{\rm a.e. in } (0, T).
\end{array}
\end{eqnarray*}

As $(w_{i})_{i\geq 1}$ is an orthogonal family of  ${\bf L}^{2}(\Omega)$ and $\varphi_{m}$ is the orthogonal
projection with respect to the  inner product of
${\bf H}^{1}(\Omega)$
of $\varphi$ on   ${\rm Span} \bigl\{w_{1},\ldots,w_{m} \bigr\}$,
 we have $\|\varphi_{m}\|_{{\bf H}^{1}(\Omega)}\leq \|\varphi\|_{{\bf H}^{1}(\Omega)}$ and
\begin{eqnarray*}
\left( \frac{ \widetilde{v}_{\varepsilon m}}{\partial t} ,\varphi_{m}\right)=\left( \frac{\partial \widetilde{v}_{\varepsilon m}}{\partial t} ,\varphi_{k} \right) \quad \forall k \ge m.
\end{eqnarray*}
Moreover $(w_{i})_{i\geq 1}$ is a Hilbertian basis of ${\cal V}_{0 div}$, so the sequence $(\varphi_k)_{k \ge 1}$ converges strongly to $\varphi$ in ${\bf H}^1(\Omega)$ and we get
\begin{eqnarray*}
\left( \frac{ \widetilde{v}_{\varepsilon m} }{\partial t} ,\varphi_{m}\right)=\left( \frac{\partial \widetilde{v}_{\varepsilon m} }{\partial t} ,\varphi \right) .
\end{eqnarray*}
Then, we obtain
\begin{eqnarray*}
\begin{array}{ll}
\displaystyle{
\left|\left( \frac{\partial \widetilde{v}_{\varepsilon m} }{\partial t},\varphi\right)\right|
\leq
2 \mu \|\widetilde{v}_{\varepsilon m}\|_{{\bf H}^{1}(\Omega)}\|\varphi\|_{{\bf H}^{1}(\Omega)}
+  c (\gamma_0) \| \ell\|_{{\bf L}^{2}(\Gamma_0)}
\|\varphi\|_{{\bf H}^{1}(\Omega)}
} \\
\displaystyle{+\left(\|f\|_{{\bf L}^{2}(\Omega)}+ 2 \mu |\zeta|\|G_{0}\|_{{\bf H}^{1}(\Omega)}+   \left|\frac{\partial\zeta}{\partial t}\right|\|G_{0}\|_{{\bf L}^{2}(\Omega)}\right)\|\varphi\|_{{\bf H}^{1}(\Omega)}  \ \mbox{\rm a.e. in $(0, T)$,} }
\end{array}
\end{eqnarray*}
where $ c (\gamma_0)$ is the norm of the trace operator $\gamma_0: {\bf H}^{1}(\Omega) \to {\bf L}^2 (\Gamma_0)$.
Hence
\begin{eqnarray*}
\begin{array}{ll}
\displaystyle{
\left\| \frac{\partial \widetilde{v}_{\varepsilon m} }{\partial t} \right\|_{{\cal V}_{0 div}'}
 \leq
2 \mu \|\widetilde{v}_{\varepsilon m}\|_{{\bf H}^{1}(\Omega)} 
+  c (\gamma_0)  \| \ell \|_{{\bf L}^{2}(\Gamma_0)}+\|f\|_{{\bf L}^{2}(\Omega)}
}
\\
\displaystyle{
 + 2 \mu |\zeta|\|G_{0}\|_{{\bf H}^{1}(\Omega)}+   \left|\frac{\partial\zeta}{\partial t}\right|\|G_{0}\|_{{\bf L}^{2}(\Omega)}  \quad \mbox{\rm a.e. in } (0, T)}
\end{array}
\end{eqnarray*}
and the conclusion follows from the estimates of Lemma \ref{lemma1}.  

\end{proof}

Now we can pass to the limit as $m$ tends to $+ \infty$. Indeed there exists a subsequence of $(\widetilde{v}_{\varepsilon m})_{m \ge 1}$, still denoted $(\widetilde{v}_{\varepsilon m})_{m \ge 1}$, such that
\begin{eqnarray*}\label{conver2}
\displaystyle{
\widetilde{v}_{\varepsilon m} \rightharpoonup \widetilde{v}_{\varepsilon} \quad\mbox{weakly star in $L^{\infty}\bigl(0,T;{\bf L}^{2}(\Omega) \bigr)$ and weakly in $ L^{2}(0,T;{\cal V}_{0 div}) $}}
\end{eqnarray*}
and 
\begin{eqnarray*}\label{conver3}
\frac{\partial \widetilde{v}_{\varepsilon m}}{\partial t} \rightharpoonup \frac{\partial \widetilde{v}_{\varepsilon }}{\partial t} 
\quad\mbox{weakly in $ L^{2}(0,T;{\cal V}_{0 div}') $.}
\end{eqnarray*}
By using Aubin's lemma we infer that, possibly extracting another subsequence, 
\begin{eqnarray*}
\widetilde{v}_{\varepsilon m} \rightarrow \widetilde{v}_{\varepsilon} \quad\mbox{strongly in $L^{2}(0,T;{\bf H}^{s}(\Omega))$}
\end{eqnarray*}
with $\frac{1}{2} < s <1$ and thus
\begin{eqnarray} \label{conver6}
\widetilde{v}_{\varepsilon m} \rightarrow \widetilde{v}_{\varepsilon} \quad\mbox{strongly in $L^{2}(0,T;{\bf L}^{2}(\Gamma_0))$ and a.e. on $\Gamma_0 \times (0,T)$.}
\end{eqnarray}

Let $\chi\in L^2(0,T )$ and $\varphi\in {\cal V}_{0 div }$. For all $m \ge 1$ we define again $\varphi_{m}$ as the orthogonal projection of $\varphi$ with respect to the  inner product of
${\bf H}^{1}(\Omega)$
 on   ${\rm Span} \bigl\{w_{1},\ldots,w_{m} \bigr\}$. We multiply (\ref{v'43}) by $\chi$, we integrate on $(0,T)$ and we pass to the limit as $m$ tends to $+ \infty$.  We get 
\begin{eqnarray} \label{v'43bis}
\begin{array}{ll}
\displaystyle \int_0^T  \left\langle \frac{ \partial \widetilde{v}_{\varepsilon } }{\partial t} ,\varphi \right\rangle_{{\cal V}_{0 div}', {\cal V}_{0 div}}  \chi  \, dt 
 + \int_0^T \int_{\Omega} 2 \mu  D(\widetilde{v}_{\varepsilon } ) : D(\varphi  \chi ) \, dx dt \\
 \displaystyle
+ \int_0^T \int_{\Gamma_0} \ell \frac{\widetilde{v}_{\varepsilon }  \cdot \varphi}{\sqrt{\varepsilon^2 + |\widetilde{v}_{\varepsilon }|^2}} \chi \, dx' dt 
=
\int_0^T (f, \varphi \chi ) \, dt 
\\
\displaystyle{ 
- \int_0^T  \int_{\Omega}  2 \mu D(G_0 \zeta): D (\varphi \chi ) \, dx dt
  - \int_0^T \left( G_{0} \frac{\partial \zeta}{\partial t}, \varphi \chi  \right)  \, dt.
}
\end{array}
\end{eqnarray}
Furthermore, by using Simon's lemma and  possibly extracting another subsequence,   we have
\begin{eqnarray*}\label{lemsimonc3}
\widetilde{v}_{\varepsilon m}\rightarrow \widetilde{v}_{\varepsilon}\quad\mbox{strongly in ${\cal C}^{0}\bigl( [0,T]; {\cal H} \bigr)$,}
\end{eqnarray*}
for any Banach space ${\cal H}$  such that ${\bf L}^{2}(\Omega)\subset {\cal H} \subset {\cal V}_{0 div}'$ with continuous injections and compact embedding of ${\bf L}^{2}(\Omega)$ into ${\cal H}$. Recalling that 
\begin{eqnarray*}
\widetilde{v}_{\varepsilon m} (0)  = \widetilde{v}^{0}_{ m} \rightarrow \widetilde{v}^{ 0}  \quad \mbox{strongly in ${\bf L}^2 (\Omega)$}
\end{eqnarray*}
we infer that $\widetilde{v}_{\varepsilon} (0) = \widetilde{v}^{ 0}$.

\bigskip

Finally, using De Rham's theorem, we obtain that there exists $ p_{\varepsilon}$ such that $(\widetilde v_{\varepsilon}, p_{\varepsilon})$ is a solution of problem $(P_{\varepsilon})$.
Indeed, possibly modifying $\widetilde v_{\varepsilon}$ on a negligible subset of $[0,T]$, we have $\widetilde v_{\varepsilon} \in C^0 \bigl( [0,T]; { \bf H} \bigr)$. Hence 
for all $t \in [0,T]$, we may define $F_{\varepsilon} \in C^0 \bigl( [0,T]; {\bf H}^{-1}(\Omega) \bigr)$ by
\begin{eqnarray*}
\begin{array}{ll}
\displaystyle \bigl\langle F_{\varepsilon} (t) , \varphi \bigr\rangle_{{\bf H}^{-1}(\Omega), {\bf H}^1_0(\Omega) }
= - \bigl( \widetilde{v}_{\varepsilon} (t) - \widetilde{v}^0, \varphi \bigr) - \int_0^t \int_{\Omega} 2 \mu D( \widetilde{v}_{\varepsilon} + G_0 \zeta) : D(\varphi) \, dx dt \\
\displaystyle + \int_0^t  ( f, \varphi) - \int_0^t \left(G_0 \frac{ \partial \zeta}{\partial t}, \varphi \right)  \, dt \quad \forall \varphi \in {\bf H}^1_0(\Omega), \ \forall t \in [0,T].
\end{array}
\end{eqnarray*}
With (\ref{v'43bis}) we obtain that
\begin{eqnarray*}
\bigl\langle F_{\varepsilon} (t) , \varphi \bigr\rangle_{{\bf H}^{-1}(\Omega) , {\bf H}^1_0(\Omega) } = 0 
\end{eqnarray*}
for all $\varphi \in {\bf H}^1_0 (\Omega)$ such that $ div (\varphi) = 0$ and we infer that there exists ${ \pi}_{\varepsilon}  \in C^0 \bigl( [0,T];  L^2_0(\Omega) \bigr)$ such that 
\begin{eqnarray} \label{pression.1}
\bigl\langle F_{\varepsilon} (t) , \varphi \bigr\rangle_{{\bf H}^{-1} (\Omega) , {\bf H}^1_0(\Omega) } = \bigl\langle \nabla { \pi}_{\varepsilon} (t) , \varphi \bigr\rangle_{{\cal D}' (\Omega), {\cal D}(\Omega)} \quad \forall \varphi \in {\cal D}(\Omega), \  \forall t \in [0,T].
\end{eqnarray}
Let us denote now by $p_{\varepsilon}$ the time derivative of ${ \pi}_{\varepsilon}$ in the distribution sense. From (\ref{pression.1}) we obtain
 \begin{eqnarray}  \label{pression.2}
\begin{array}{ll}
\displaystyle \frac{d }{d t}( \widetilde{v}_{\varepsilon} , \varphi ) + a ( \widetilde{v}_{\varepsilon} + G_0 \zeta ,\varphi) + 
  \bigl\langle \nabla { p}_{\varepsilon}  , \varphi \bigr\rangle_{{\cal D}' (\Omega), {\cal D}(\Omega)} \\
\displaystyle =  ( f, \varphi) -  \left(G_0 \frac{ \partial \zeta}{\partial t}, \varphi \right)  \, dt \quad \forall \varphi \in {\bf {\cal D}}(\Omega)
\end{array}
\end{eqnarray}
in ${\cal D}'(0,T)$.

\begin{lemma} \label{lemma3}
We have $ p_{\varepsilon} \in H^{-1} (0,T;L^{2}_0(\Omega))$ and  there exists a constant $C_3$ independent of $\varepsilon$ such that
\begin{eqnarray*}
\| p_{\varepsilon} \|_{H^{-1} (0,T;L^{2}(\Omega))} \le C_3.
\end{eqnarray*}
\end{lemma}

\begin{proof}
With (\ref{pression.2}) we get
 \begin{eqnarray*} 
\begin{array}{ll}
\displaystyle  \bigl\langle \bigl( { p}_{\varepsilon}  , div( \varphi ) \bigr),  \chi \bigr\rangle_{{\cal D}'(0,T), {\cal D}(0,T) } = 
- \int_0^T ( \widetilde{v}_{\varepsilon} , \varphi ) \chi'  \, dt  + \int_0^T a ( \widetilde{v}_{\varepsilon} + G_0 \zeta ,\varphi) \chi \, dt  
   \\
\displaystyle - \int_0^T  ( f, \varphi)\chi \, dt  +  \int_0^T \left(G_0 \frac{ \partial \zeta}{\partial t}, \varphi \right)  \chi \, dt \quad \forall \varphi \in {\bf {\cal D}} (\Omega), \ \forall \chi \in  {\cal D} (0,T).
\end{array}
\end{eqnarray*}
By density the same equality is still valid for all $\varphi \in {\bf H}^1_0 (\Omega)$. Now let $\widetilde w \in L^2(\Omega)$ and
\begin{eqnarray*}
w= \widetilde w - \frac{1}{|\Omega|} \int_{\Omega} \widetilde w \, dx .
\end{eqnarray*}
By construction we have $w \in L^2_0(\Omega)$ and $\|w\|_{L^2(\Omega)} \le \| \tilde w\|_{L^2(\Omega)}$. Moreover there exists a linear continuous operator $P: L^2_0(\Omega) \to {\bf H}^1_0 (\Omega)$ such that  
\begin{eqnarray*}
P(w) = \varphi \in {\bf H}^1_0 (\Omega), \quad div (\varphi) = w \quad \forall w \in L^2_0(\Omega)
\end{eqnarray*}
 (see \cite{girault}). 
It follows that 
 \begin{eqnarray*} 
\begin{array}{ll}
\displaystyle 
\left| \bigl\langle ( { p}_{\varepsilon}  , \widetilde w ),  \chi \bigr\rangle_{{\cal D}'(0,T), {\cal D}(0,T) } \right|
 = \left| \bigl\langle ( { p}_{\varepsilon}  , w ) ,  \chi \bigr\rangle_{{\cal D}'(0,T), {\cal D}(0,T) } \right| \\
 \displaystyle 
 =  \left| \bigl\langle \bigl( { p}_{\varepsilon}  , div( \varphi ) \bigr),  \chi \bigr\rangle_{{\cal D}'(0,T), {\cal D}(0,T) } \right|
\\
\displaystyle  \le \| \widetilde{v}_{\varepsilon} \|_{L^{2} (0,T; {\bf L}^{2} (\Omega))} \| P(w) \chi' \|_{L^{2} (0,T;{\bf L}^{2}(\Omega))} \\
\displaystyle + 2 \mu  \bigl( \| \widetilde{v}_{\varepsilon} \|_{L^{2} (0,T;{\bf H}^1(\Omega) )} + \sqrt{T} \|G_0\|_{{\bf H}^1(\Omega)} \| \zeta \|_{C^0([0,T]} \bigr) \| P(w) \chi \|_{L^{2} (0,T; {\bf H}^{1}(\Omega))} \\
\displaystyle  + \left( \|f\|_{L^{2} (0,T;{\bf L}^{2}(\Omega))} + \sqrt{T} \|G_0\|_{{\bf L}^2(\Omega)} \left\| \frac{\partial \zeta}{\partial t} \right\|_{C^0([0,T])} \right) \| P(w) \chi \|_{L^{2} (0,T; {\bf L}^{2}(\Omega))} \\
\displaystyle \forall \chi \in  {\cal D} (0,T).
\end{array}
\end{eqnarray*}
Since the estimates obtained in Lemma \ref{lemma1}  are independent of $m$ and $\varepsilon$, we infer that the sequence $(\widetilde{v}_{\varepsilon })_{\varepsilon  >0}$ is  bounded in $ L^{2}(0,T;{\cal V}_{0 div}) \cap L^{\infty}\bigl(0,\tau;{\bf L}^{2}(\Omega) \bigr) $. Finally, by using the continuity of the operator $P$ and the density of ${\cal D}(0,T) \otimes L^2(\Omega)$ into $H^1_0 \bigl( 0,T; L^2(\Omega) \bigr)$, we may conclude.

\end{proof}

It follows that $\sigma_{\varepsilon} = - p_{\varepsilon} {\rm Id} + 2 \mu D(\widetilde{v}_{\varepsilon} + G_0 \zeta) $ belongs to $H^{-1} \bigl(0,T; \bigl(L^{2}(\Omega)\bigr)^{3 \times 3} \bigr)$ and is bounded in $H^{-1} \bigl(0,T; \bigl(L^{2}(\Omega)\bigr)^{3 \times 3} \bigr)$ uniformly with respect to   $\varepsilon$. Moreover, we have
\begin{eqnarray*}
&&\left\langle {d \over d t} \left( \widetilde{v}_{\varepsilon}, \varphi \right) , \chi\right\rangle_{{\cal D}'(0,T), {\cal D}(0,T)} 
+ \int_0^T a(\widetilde{v}_{\varepsilon}+ G_0 \zeta,\varphi \chi) \, dt  \nonumber\\
&&
- \bigl\langle \bigl(p_{\varepsilon} , div(\varphi)  \bigr),  \chi \bigr\rangle_{{\cal D}'(0,T), {\cal D}(0,T)} 
  =
\bigl\langle (f,\varphi ),  \chi \bigr\rangle_{{\cal D}'(0,T), {\cal D}(0,T)} 
\nonumber\\
&&
 - \left\langle \left(G_{0} \frac{\partial \zeta}{\partial t},\varphi \right),
  \chi \right\rangle_{{\cal D}'(0,T), {\cal D}(0,T)}  
 \quad \forall \varphi \in {\bf{\cal D}} (\Omega), \  \forall \chi \in  {\cal D} (0,T)
\end{eqnarray*}
i.e.
 \begin{eqnarray} \label{pression.3bis}
\begin{array}{ll}
\displaystyle  
- \int_0^T ( \widetilde{v}_{\varepsilon} , \varphi ) \chi'  \, dt  - \int_0^T  \bigl\langle div (\sigma_{\varepsilon}), \varphi \bigr\rangle_{{\bf{ \cal D}}' (\Omega), {\bf {\cal D}} (\Omega) } \chi \, dt  
   \\
\displaystyle = \int_0^T  ( f, \varphi)\chi \, dt  -  \int_0^T \left(G_0 \frac{ \partial \zeta}{\partial t}, \varphi \right)  \chi \, dt \quad \forall \varphi \in {\bf{\cal D}} (\Omega), \  \forall \chi \in  {\cal D} (0,T)
\end{array}
\end{eqnarray}
and we infer that $div(\sigma_{\varepsilon})$ belongs to $H^{-1} \bigl(0,T; {\bf L}^{2}(\Omega) \bigr)$ and is bounded in $H^{-1} \bigl(0,T; {\bf L}^{2}(\Omega) \bigr)$ uniformly with respect to   $\varepsilon$.


\bigskip

Let us consider now $\varphi \in {\cal V}_0$ in (\ref{pression.3bis}). 
Recalling that  $\sigma_{\varepsilon}$ and $div(\sigma_{\varepsilon})$ belong to $H^{-1} \bigl(0,T; \bigl(L^{2}(\Omega)\bigr)^{3 \times 3} \bigr)$ and 
$ H^{-1} \bigl( 0,T; {\bf L}^{2}(\Omega) \bigr)$ respectively, we may apply Green's formula (see \cite{girault}) and we get 
 \begin{eqnarray*} \label{pression.3}
\begin{array}{ll}
\displaystyle  
- \int_0^T ( \widetilde{v}_{\varepsilon} , \varphi ) \chi'  \, dt  +   \int_0^T a ( \widetilde{v}_{\varepsilon} + G_0 \zeta ,\varphi) \chi \, dt 
 - \int_0^T \bigl( { p}_{\varepsilon}  , div( \varphi ) \bigr) \chi \, dt \\
 \displaystyle 
 - \int_0^T \int_{\Gamma_0} \sum_{i,j =1}^3 \sigma_{i j \varepsilon} n_j \varphi_i \chi \, dx' dt \\
\displaystyle  = \int_0^T  ( f, \varphi)\chi \, dt 
-  \int_0^T \left(G_0 \frac{ \partial \zeta}{\partial t}, \varphi \right)  \chi \, dt \quad  \forall \chi \in  {\cal D} (0,T).
\end{array}
\end{eqnarray*}
But for any $\varphi \in {\cal V}_0$ we have
\begin{eqnarray*}
\int_{\partial \Omega} \varphi \cdot n \, d \gamma = 0
\end{eqnarray*}
and there exists ${\widetilde \varphi} \in {\bf H}^1 (\Omega)$ such that
\begin{eqnarray*}
{\widetilde \varphi} = \varphi \ \mbox{on $\partial \Omega$,} \quad div({\widetilde \varphi}) = 0 \ \mbox{ in $\Omega$}
\end{eqnarray*}
since $\Omega$ is connected (see \cite{girault}). Then we get
 \begin{eqnarray*} \label{pression.3}
\begin{array}{ll}
\displaystyle  
- \int_0^T ( \widetilde{v}_{\varepsilon} , {\widetilde \varphi} ) \chi'  \, dt  +   \int_0^T a ( \widetilde{v}_{\varepsilon} + G_0 \zeta , {\widetilde \varphi}) \chi \, dt 
 - \int_0^T \int_{\Gamma_0} \sum_{i,j =1}^3 \sigma_{i j \varepsilon } n_j {\widetilde \varphi}_i \chi \, dx' dt
 \\
= 
\displaystyle  \int_0^T  ( f, {\widetilde \varphi})\chi \, dt  -  \int_0^T \left(G_0 \frac{ \partial \zeta}{\partial t}, {\widetilde \varphi} \right)  \chi \, dt \quad  \forall \chi \in  {\cal D} (0,T).
\end{array}
\end{eqnarray*}
By comparing with (\ref{v'43bis}), we obtain
\begin{eqnarray*}
 - \int_0^T \int_{\Gamma_0} \sum_{i,j =1}^3 \sigma_{i j \varepsilon } n_j {\widetilde \varphi}_i \chi \, dx' dt 
 = \int_0^T \int_{\Gamma_0} \ell  \frac{\widetilde{v}_{\varepsilon} \cdot {\widetilde \varphi}}{\sqrt{\varepsilon^{2} + |\widetilde{v}_{\varepsilon}|^{2}}} \chi \,dx' dt \quad \forall \chi \in {\cal D}(0,T).
 \end{eqnarray*}
 Owing that ${\widetilde \varphi} = \varphi$ on $\Gamma_0$ we infer that
 \begin{eqnarray*}
 - \int_0^T \int_{\Gamma_0} \sum_{i,j =1}^3 \sigma_{i j \varepsilon} n_j { \varphi}_i \chi \, dx' dt 
 = \int_0^T \int_{\Gamma_0} \ell  \frac{\widetilde{v}_{\varepsilon} \cdot { \varphi}}{\sqrt{\varepsilon^{2} + |\widetilde{v}_{\varepsilon}|^{2}}} \chi \,dx' dt \quad \forall \chi \in {\cal D}(0,T)
 \end{eqnarray*}
 and finally $(\widetilde v_{\varepsilon}, p_{\varepsilon})$ is a solution of problem $(P_{\varepsilon})$.

%


\bigskip

Now, observing that 
\begin{eqnarray*}
\displaystyle \int_0^{T} \int_{\Gamma_0} \ell \bigl( \sqrt{ \varepsilon^2 + |\widetilde v_{\varepsilon} + \varphi \chi|^2} 
- \sqrt{ \varepsilon^2 + |\widetilde v_{\varepsilon} |^2} \bigr) \, dx' dt 
\displaystyle \ge \int_0^T \int_{\Gamma_0} \ell \frac{\widetilde{v}_{\varepsilon} \cdot \varphi}{\sqrt{\varepsilon^{2} + |\widetilde{v}_{\varepsilon}|^{2}}} \chi \,dx' dt
\end{eqnarray*}
we get from (\ref{NS-25bis}) the following variational inequality
\begin{equation} \label{NS-25ter}
\begin{array}{ll}
\displaystyle
\left\langle \frac{d}{dt} \left( \widetilde{v}_{\varepsilon}, \varphi \right) , \chi\right\rangle_{{\cal D}'(0,T), {\cal D}(0,T)} 
- \bigl\langle \bigl(p_{\varepsilon},div(\varphi) \bigr), \chi \bigr\rangle_{{\cal D}'(0,T), {\cal D}(0,T)}
+ \int_0^T a(\widetilde{v}_{\varepsilon},\varphi\chi)  \, dt \\
\displaystyle
+ \int_0^{T} \int_{\Gamma_0} \ell  \sqrt{ \varepsilon^2 + |\widetilde v_{\varepsilon} + \varphi \chi|^2} \, dx' dt
-\int_0^{T} \int_{\Gamma_0} \ell  \sqrt{ \varepsilon^2 + |\widetilde v_{\varepsilon} |^2} \, dx' dt \\
\displaystyle  \ge
\bigl\langle (f,\varphi ), \chi \bigr\rangle_{{\cal D}'(0,T), {\cal D}(0,T)} - \int_0^T a (G_{0}\zeta ,\varphi \chi) \, dt \\
\displaystyle - \left\langle \left(G_{0}\frac{\partial \zeta}{\partial t},\varphi \right), \chi \right\rangle_{{\cal D}'(0,T), {\cal D}(0,T)}
 \quad \forall \varphi\in {\cal V}_{0}, \  \forall \chi\in {\cal D}(0,T).
\end{array}
\end{equation}

Let us pass now to the limit as $\varepsilon$ tends to zero. Since the estimates obtained in Lemma \ref{lemma1}, Lemma \ref{lemma2} and Lemma \ref{lemma3} are independent of $m$ and $\varepsilon$, we infer that the sequences $(\widetilde{v}_{\varepsilon })_{\varepsilon  >0}$, $\displaystyle \left(\frac{\partial \widetilde{v}_{\varepsilon }}{\partial t} \right)_{\varepsilon  >0}$ and $(p_{\varepsilon })_{\varepsilon  >0}$ are bounded in $ L^{2}(0,T;{\cal V}_{0 div}) \cap L^{\infty}\bigl(0,T;{\bf L}^{2}(\Omega) \bigr) $,  $ L^{2}(0,T;{\cal V}_{0 div}')$ and $H^{-1} \bigl( 0,T; L^2_0 (\Omega) \bigr)$ respectively. Hence  we have the following the convergence results  
\begin{eqnarray*}
\displaystyle{
\widetilde{v}_{\varepsilon } \rightharpoonup \widetilde{v} \quad\mbox{weakly star in $L^{\infty}\bigl(0,T;{\bf L}^{2}(\Omega) \bigr)$ and weakly in $ L^{2}(0,T;{\cal V}_{0 div})$,}}
\end{eqnarray*}
\begin{eqnarray*}
\displaystyle{
\frac{\partial \widetilde{v}_{\varepsilon }}{\partial t} \rightharpoonup \frac{\partial \widetilde{v}}{\partial t} \quad\mbox{weakly in $ L^{2}(0,T;{\cal V}_{0 div}') $, }}
\end{eqnarray*}
\begin{eqnarray*}
\displaystyle{
\widetilde{p}_{\varepsilon } \rightharpoonup \widetilde{p}  \quad\mbox{weakly  in $H^{-1}\bigl(0,T;{ L}^{2}_0(\Omega) \bigr)$.}}
\end{eqnarray*}
Moreover, possibly extracting another subsequence, we have
\begin{eqnarray*}
\widetilde{v}_{\varepsilon} \rightarrow \widetilde{v} \quad\mbox{strongly in $ L^{2} \bigl(0,T;{\bf L}^{2}(\Gamma_0) \bigr)$,}
\end{eqnarray*}
and
\begin{eqnarray*}
\widetilde{v}_{\varepsilon } \rightarrow \widetilde{v}  \quad\mbox{strongly in ${\cal C}^{0}\bigl( [0,T]; {\cal H} \bigr)$,}
\end{eqnarray*}
for any Banach space ${\cal H}$  such that ${\bf L}^{2}(\Omega)\subset {\cal H} \subset {\cal V}_{0 div}'$ with continuous injections and compact  embedding of ${\bf L}^{2}(\Omega)$ into ${\cal H}$.
Then we may use the same arguments as previously to pass to the limit as $\varepsilon$ tends to zero in problem (\ref{NS-25ter}). Indeed, for the boundary term, we may apply the following property
\begin{eqnarray*}
\begin{array}{ll}
\displaystyle \left| \int_0^{T} \int_{\Gamma_0} \ell  \sqrt{ \varepsilon^2 + |\widetilde v_{\varepsilon} + \varphi \chi|^2}   \, dx' dt 
 - \int_0^{T} \int_{\Gamma_0} \ell |\widetilde v  + \varphi \chi| \, dx' dt \right| \\
 \displaystyle \le \|\ell\|_{L^2(0, T; {\bf L}^2(\Gamma_0))} \Bigl( \| \widetilde v_{\varepsilon} - \widetilde v \|_{L^2(0, T; {\bf L}^2(\Gamma_0))} + \varepsilon \sqrt{T {\rm meas}(\Gamma_0)}  \Bigr)
 \end{array}
 \end{eqnarray*}
for all $\varphi\in {\cal V}_{0}$ and for all $ \chi\in {\cal D}(0,T)$, which allows us to conclude that $(\widetilde v, p)$ is a solution of problem $(P)$.

\bigskip

There remains now to prove uniqueness. Let ($\widetilde{v}_{1}, p_{1}$) and $(\widetilde{v}_{2}, p_{2}$) be two solutions
of problem $(P)$. Since $\displaystyle \frac{ \partial \widetilde v_i}{\partial t} \in L^2 (0,T; {\cal V}_{0 div}')$, $i=1,2$, we may rewrite (\ref{NS-25}) as
\begin{eqnarray*}
&&\int_0^{T}
\bigr\langle \frac{\partial  \widetilde{v}_{i}}{\partial t} ,   \varphi\chi \bigl\rangle_{{\cal V}'_{0, div} , {\cal V}_{0, div}} \,  dt
 + \int_0^{T} a(\widetilde{v}_{i}+ G_0 \zeta ,\varphi\chi) \, dt
\\
&&
 +\int_{0}^{T}\int_{\Gamma_0} \ell \left(
|\widetilde{v}_{i} + \varphi \chi|-|\widetilde{v}_{i} |\right) dx' dt
\geq
\int_0^{T} (f,\varphi\chi )\, dt
- \int_0^{T}  \left(G_{0} \frac{\partial \zeta}{\partial t},\varphi\chi \right) \, dt 
\end{eqnarray*}
for any $\varphi \in {\cal V}_{0div}$ and $\chi \in {\cal D} (0, T)$. By density of ${\cal D}(0, T)\otimes{\cal V}_{0div}$  into $L^{2}(0 , T; {\cal V}_{0div})$ we may replace $\varphi \chi$ by $ (\widetilde{v}_{j}-\widetilde{v}_{i}) {\bf 1}_{[0,s]}$ with $i, j \in \{1, 2\}$, $i \not=j$ and $s \in [0, T]$.  By adding the two variational inequalities and using (\ref{coercif}) we get
\begin{eqnarray*}
 \frac{1}{2}\int_{0}^{s}\frac{d}{d t}\|\widetilde{v}_{1}-\widetilde{v}_{2}\|^{2}_{{\bf L}^{2}(\Omega)}dt
+ \alpha\int_{0}^{s}\|\widetilde{v}_{1}-\widetilde{v}_{2}\|^{2}_{{\bf H}^{1}(\Omega)} dt
 \leq 0.
\end{eqnarray*}
As $\widetilde{v}_{1}(0)=\widetilde{v}_{2}(0)=\widetilde{v}^{0}$, we obtain
\begin{eqnarray*}
 \|\widetilde{v}_{1}(s)-\widetilde{v}_{2}(s)\|^{2}_{ {\bf L}^{2}(\Omega)} \le 0 \quad \forall s \in [0, T].
\end{eqnarray*}
Then, with (\ref{NS-25}) we have
\begin{eqnarray*}
\bigl\langle \bigl(p_1 -p_2, div(\varphi)  \bigr), \chi \bigr\rangle_{{\cal D}'(0,T), {\cal D}(0,T)}  =0 \quad \forall \varphi \in {\bf H}^1_0 (\Omega), \ \forall \chi \in {\cal D}(0, T).
\end{eqnarray*}
Now let $\widetilde w \in L^2(\Omega)$ and
\begin{eqnarray*}
w= \widetilde w - \frac{1}{|\Omega|} \int_{\Omega} \widetilde w \, dx  \in L^2_0(\Omega).
\end{eqnarray*}
There exists $\varphi = P(w) \in {\bf H}^1_0 (\Omega)$ such that $div (\varphi) = w$ (see \cite{girault}) and thus
\begin{eqnarray*}
\begin{array}{ll}
\displaystyle  \bigl\langle (p_1 -p_2, \tilde w ), \chi \bigr\rangle_{{\cal D}'(0,T), {\cal D}(0,T)} \\
\displaystyle 
 =  \bigl\langle (p_1 -p_2,  w ), \chi \bigr\rangle_{{\cal D}'(0,T), {\cal D}(0,T)}  =0  \quad \forall \chi \in {\cal D}(0, T).
 \end{array}
\end{eqnarray*}
By density of ${\cal D}(0, T) \otimes L^2(\Omega)$ into $H^1_0 \bigl( 0, T; L^2(\Omega) \bigr)$ we get
\begin{eqnarray*}
\bigl\langle p_1 - p_2, \eta \bigr\rangle_{H^{-1}(0, T; L^2(\Omega)), H^1_0 (0, T; L^2(\Omega))} =0 \quad \forall \eta \in H^1_0 \bigl( 0,T; L^2(\Omega) \bigr)
\end{eqnarray*}
and thus $p_1 = p_2$. 

\end{proof}

\bigskip

Let us assume now that the following compatibility condition for the initial velocity is satisfied
\begin{eqnarray}\label{v0-1}
v^0 \in {\bf H}^2 (\Omega) , \quad div(v^0) =0 \  \mbox{\rm in} \ \Omega, \quad v^0 = g \ \mbox{\rm on} \ \partial \Omega
\end{eqnarray}
and
\begin{eqnarray} \label{v0}
\frac{\partial v^0}{\partial x_3} =0 \ \mbox{\rm on} \ \Gamma_0.
\end{eqnarray}
Then we may choose $G_0 =v^0$ and the initial condition (\ref{NS-25init}) becomes
\begin{eqnarray*}
\widetilde{v}(0, \cdot) = v^0 -  G_0 = \widetilde{v}^{0} = 0 \in  {\bf H}.
\end{eqnarray*}
Let us assume also that
\begin{eqnarray}\label{ell}
f \in  W^{1,2} \bigl(0,T;  {\bf L}^{ 2} (\Omega) \bigr), \quad  \ell \in W^{1,2} \bigl(0,T;  { L}^{ 2}(\Gamma_0; \br^+) \bigr) .
\end{eqnarray}

Then we can prove further regularity properties for the unique solution of problem $(P)$.

\begin{theorem} \label{tresca-regularity}
Let assumptions (\ref{zeta})-(\ref{v0-1})-(\ref{v0})-(\ref{ell}) hold.
Then the unique solution $(\widetilde v, p)$ of problem $(P)$ satisfies the following regularity properties
\begin{eqnarray*}
\frac{\partial \widetilde v}{\partial t} \in L^{\infty} \bigl( 0,T; {\bf L}^2 (\Omega) \bigr) \cap L^2 (0,T; {\cal V}_{0 div}), \quad p \in L^{\infty} \bigl(0,T; L^2_0 (\Omega) \bigr)
\end{eqnarray*}
and
\begin{eqnarray*}
 \frac{\partial^2 \widetilde v}{\partial t^2} \in  L^2 \bigl(0,T; \bigl( {\bf H}^1_{0 div}(\Omega) \bigr)' \bigr)
\end{eqnarray*}
with ${\bf H}^1_{0 div} (\Omega) = \bigl\{ \varphi \in {\bf H}^1 (\Omega): \ \varphi=0 \ \mbox{on $\partial \Omega$ and } div(\varphi) = 0 \ \mbox{in $\Omega$} \bigr\}$.
\end{theorem}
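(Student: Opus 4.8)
The plan is to regularize again — but now exploiting the vanishing initial datum and the extra regularity of $f$, $\ell$ and $G_0 = v^0$ — and then to differentiate the approximate equations with respect to time. More precisely, I would go back to the Galerkin scheme $(P_{\varepsilon m})$ of Theorem \ref{tresca-existence}, whose solution $\widetilde v_{\varepsilon m}$ lies in $W^{1,2}(0,T;{\cal V}_{0div})$. Since $f \in W^{1,2}(0,T;{\bf L}^2(\Omega))$, $\ell \in W^{1,2}(0,T;L^2(\Gamma_0;\br^+))$, $\zeta \in C^\infty$, and the map $u \mapsto u/\sqrt{\varepsilon^2+|u|^2}$ is smooth with bounded first derivative (bounded by $1/\varepsilon$ on $\Gamma_0$), the right-hand side and the friction term in (\ref{NS35}) are Lipschitz in a way that lets me differentiate (\ref{NS35}) in $t$: set $z_{\varepsilon m} = \partial \widetilde v_{\varepsilon m}/\partial t$, which solves a linear-in-$z$ ODE system on ${\rm Span}\{w_1,\dots,w_m\}$. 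The crucial point is the initial value $z_{\varepsilon m}(0)$: evaluating (\ref{NS35}) at $t=0$ and using $\widetilde v_{\varepsilon m}(0) = \widetilde v^0_m = 0$ (because $\widetilde v^0 = 0$), the friction term vanishes at $t=0$, so
\begin{eqnarray*}
\bigl(z_{\varepsilon m}(0), w_k\bigr) = (f(0),w_k) - a(G_0,w_k) - \Bigl(G_0 \tfrac{\partial\zeta}{\partial t}(0),w_k\Bigr),
\end{eqnarray*}
and since $G_0 = v^0 \in {\bf H}^2(\Omega)$ with $\partial v^0/\partial x_3 = 0$ on $\Gamma_0$, the quantity $\mathrm{div}(2\mu D(v^0))$ makes sense in ${\bf L}^2(\Omega)$ and an integration by parts shows $z_{\varepsilon m}(0)$ is the ${\bf L}^2$-projection of $f(0) + \mathrm{div}(2\mu D(v^0)) - G_0\frac{\partial\zeta}{\partial t}(0)$; the boundary term $\int_{\Gamma_0}\sigma_t^0 \cdot w_k$ drops thanks to (\ref{v0}) (which forces the shear stress of $v^0$ to be tangential-normal-derivative-free, hence the contribution vanishes against $w_k\in{\cal V}_{0div}$ with $w_{kn}=0$). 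In particular $\|z_{\varepsilon m}(0)\|_{{\bf L}^2(\Omega)}$ is bounded uniformly in $m,\varepsilon$.

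Next I would run the same energy estimate on the differentiated equation that was used in Lemma \ref{lemma1}, i.e. multiply the equation for $z_{\varepsilon m}$ by $g'_{\varepsilon k}(t)$ and sum over $k$. The friction term contributes $\int_{\Gamma_0} \ell\, \partial_t\bigl(\widetilde v_{\varepsilon m}/\sqrt{\varepsilon^2+|\widetilde v_{\varepsilon m}|^2}\bigr)\cdot z_{\varepsilon m}\,dx' + \int_{\Gamma_0}\frac{\partial\ell}{\partial t}\frac{\widetilde v_{\varepsilon m}\cdot z_{\varepsilon m}}{\sqrt{\varepsilon^2+|\widetilde v_{\varepsilon m}|^2}}\,dx'$; the first integral is \emph{nonnegative} because the Jacobian of $u\mapsto u/\sqrt{\varepsilon^2+|u|^2}$ is symmetric positive semidefinite, so it can be dropped, and the second is controlled by $\|\partial\ell/\partial t\|_{{\bf L}^2(\Gamma_0)}\|z_{\varepsilon m}\|_{{\bf L}^2(\Gamma_0)}$, hence by $C\|\partial\ell/\partial t\|_{{\bf L}^2(\Gamma_0)}\|z_{\varepsilon m}\|_{{\bf H}^1(\Omega)}$ via the trace inequality, absorbed using Young and (\ref{coercif}). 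Combined with the $W^{1,2}$ bounds on $f$ and the smoothness of $\zeta$, Korn's inequality (\ref{coercif}) and Gr\"onwall's lemma then give $\|z_{\varepsilon m}\|_{L^\infty(0,T;{\bf L}^2(\Omega))} + \|z_{\varepsilon m}\|_{L^2(0,T;{\bf H}^1(\Omega))} \le C$, uniformly in $m$ and $\varepsilon$. Passing to the limit $m\to+\infty$ and then $\varepsilon\to 0$ (using the already-established weak/weak-$*$ and strong convergences of $\widetilde v_{\varepsilon m}$, plus a.e. convergence on $\Gamma_0\times(0,T)$ to identify the limit of the friction term, exactly as in Theorem \ref{tresca-existence}) yields $\partial\widetilde v/\partial t \in L^\infty(0,T;{\bf L}^2(\Omega))\cap L^2(0,T;{\cal V}_{0div})$ by weak lower semicontinuity of the norms. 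For the pressure, I would redo the argument of Lemma \ref{lemma3} but now with $\partial\widetilde v_\varepsilon/\partial t$ playing the role previously played by $\widetilde v_\varepsilon$: from (\ref{pression.2}) tested against $P(w)$, $\langle(p_\varepsilon,\widetilde w),\chi\rangle$ is bounded by terms involving $\|\partial\widetilde v_\varepsilon/\partial t\|_{{\bf L}^2}$, $\|\widetilde v_\varepsilon\|_{{\bf H}^1}$, $\|f\|_{{\bf L}^2}$ — this time \emph{without} a $\chi'$ appearing, because $d_t(\widetilde v_\varepsilon,\varphi)$ is now a genuine ${\bf L}^2$ pairing — so one gets a uniform $L^\infty(0,T;L^2_0(\Omega))$ bound on $p_\varepsilon$ and, in the limit, $p\in L^\infty(0,T;L^2_0(\Omega))$.

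Finally, for $\partial^2\widetilde v/\partial t^2 \in L^2(0,T;({\bf H}^1_{0div}(\Omega))')$, I would test the limiting equation (or the differentiated $\varepsilon$-equation before passing to the limit) with $\varphi \in {\bf H}^1_{0div}(\Omega)$: since such $\varphi$ vanishes on all of $\partial\Omega$, the boundary friction term disappears entirely, and one is left with $\langle \partial^2_t\widetilde v,\varphi\rangle = -a(\partial_t\widetilde v,\varphi) + (\partial_t f,\varphi) - (G_0\frac{\partial^2\zeta}{\partial t^2},\varphi)$ (the pressure term also drops because $\mathrm{div}(\varphi)=0$), whence $\|\partial^2_t\widetilde v\|_{({\bf H}^1_{0div})'} \le 2\mu\|\partial_t\widetilde v\|_{{\bf H}^1(\Omega)} + \|\partial_t f\|_{{\bf L}^2(\Omega)} + |\frac{\partial^2\zeta}{\partial t^2}|\,\|G_0\|_{{\bf L}^2(\Omega)}$, and the right-hand side is in $L^2(0,T)$ by the bounds just obtained together with (\ref{ell}) and (\ref{zeta}). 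I expect the main obstacle to be the rigorous justification of the identity for $z_{\varepsilon m}(0)$ and of the differentiated energy estimate: one must be careful that the friction term's Jacobian is genuinely positive semidefinite (so it may be discarded with the correct sign), and that the compatibility conditions (\ref{v0-1})–(\ref{v0}) are exactly what is needed to make $\mathrm{div}(2\mu D(v^0))\in{\bf L}^2(\Omega)$ and to kill the boundary contribution at $t=0$ — without (\ref{v0}) the bound on $z_{\varepsilon m}(0)$ would fail and the whole scheme collapses.
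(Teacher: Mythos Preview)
Your proposal is correct and follows essentially the same route as the paper: differentiate the Galerkin equation (\ref{NS35}) in time, use the positive semidefiniteness of the Jacobian of $u\mapsto u/\sqrt{\varepsilon^2+|u|^2}$ to discard that friction contribution, control the $\partial_t\ell$ term by the trace inequality, bound $z_{\varepsilon m}(0)$ via the compatibility conditions (\ref{v0-1})--(\ref{v0}) and an integration by parts, apply Gr\"onwall, then upgrade the pressure estimate and test against ${\bf H}^1_{0div}(\Omega)$ for the second time derivative. One small slip: in your final dual bound you dropped the term $-a(v^0\partial_t\zeta,\varphi)$ coming from differentiating $a(G_0\zeta,\varphi)$, so the estimate should also include $2\mu|\partial_t\zeta|\,\|v^0\|_{{\bf H}^1(\Omega)}$, which is harmless.
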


\begin{proof}
Let us adopt the same notations as in the previous proof. Recalling that the trace operator maps ${\bf H}^1 (\Omega)$ into ${\bf L}^4 (\partial \Omega)$ (see \cite{roubicek} for instance), 
 we infer from (\ref{ell}) that, for all $\varepsilon >0$ and for all $m \ge 1$, we have $\widetilde{v}_{\varepsilon m} \in W^{2,2}\bigl(0, T; {\rm Span} \{w_1, \dots, w_m \} \bigr)$ and (\ref{NS35}) holds for all $t \in [0,T]$.
We may differentiate all the terms of (\ref{NS35}) with respect to the time variable and we obtain
\begin{eqnarray}\label{eq38bis}
\begin{array}{ll}
\displaystyle  \left(\frac{\partial^{2} \widetilde{v}_{\varepsilon m}}{\partial t^{2}} , w_k \right)
 +
a\left( {\partial \widetilde{v}_{\varepsilon m} \over\partial t} , w_k \right)
+\int_{\Gamma_0} {\partial \ell\over\partial t}
\frac{\widetilde{v}_{\varepsilon m}  \cdot w_k
}{\sqrt{\varepsilon^{2}+|\widetilde{v}_{\varepsilon m} |^{2}}}\,dx'
%
\\
\displaystyle 
+ \int_{\Gamma_0}\ell
\left(
\frac{{\partial\widetilde{v}_{\varepsilon m}  \over\partial t} \cdot w_k}{\sqrt{\varepsilon^{2}+|\widetilde{v}_{\varepsilon m}|^{2}}}
- \frac{ \left( \widetilde{v}_{\varepsilon m} \cdot {\partial\widetilde{v}_{\varepsilon m}  \over\partial t} \right) ( \widetilde{v}_{\varepsilon m} \cdot w_k)}{ \bigl(\varepsilon^{2}+|\widetilde{v}_{\varepsilon m}|^{2}\bigr)^{3/2}}
 \right) \, dx'
%
%
%
\\
\displaystyle =
\left( {\partial f\over\partial t} , w_k \right)
-
 a \left( v^{0} {\partial \zeta\over \partial t} , w_k \right)
-
 \left( v^{0} {\partial^2 \zeta\over \partial t^2} , w_k \right)
 \quad \mbox{\rm  a.e. in $(0, T)$, }
 \end{array}
\end{eqnarray}
for all $k \in \{1, \dots, m\}$. 
Now we multiply (\ref{eq38bis}) by $g'_{\varepsilon k} (t)$ and we add from $k=1$ to $m$. We obtain
\begin{eqnarray*}
\int_{\Gamma_0}\ell
\left(
\frac{\left|{\partial\widetilde{v}_{\varepsilon m}  \over\partial t}\right|^2}{\sqrt{\varepsilon^{2}+|\widetilde{v}_{\varepsilon m}|^{2}}}
- \frac{ \left( \widetilde{v}_{\varepsilon m} \cdot {\partial\widetilde{v}_{\varepsilon m}  \over\partial t} \right)^2}{ \bigl(\varepsilon^{2}+|\widetilde{v}_{\varepsilon m}|^{2}\bigr)^{3/2}}
 \right) \, dx'
  \ge
 \int_{\Gamma_0}\ell\varepsilon^{2}
\frac{|{\partial\widetilde{v}_{\varepsilon m}  \over\partial t}|^{2}}{\left(\varepsilon^{2}
+|\widetilde{v}_{\varepsilon m} |^{2}\right)^{\frac{3}{2}}}\,dx'\geq 0
\end{eqnarray*}
and with (\ref{coercif}) we get
\begin{eqnarray}\label{eq39}
&& \frac{1}{2}\frac{d}{d t} \left\|
 {\partial \widetilde{v}_{\varepsilon m} \over\partial t}\right\|^{2}_{{\bf L}^{2}(\Omega)}
+ \alpha \left\| {\partial \widetilde{v}_{\varepsilon m} \over\partial t}\right\|^{2}_{{\bf H}^1(\Omega)}
\le
- \int_{\Gamma_0} {\partial \ell\over\partial t}
\frac{\widetilde{v}_{\varepsilon m} \cdot
{\partial\widetilde{v}_{\varepsilon m}  \over\partial t}
}{\sqrt{\varepsilon^{2}+|\widetilde{v}_{\varepsilon m} |^{2}}}\,dx'
\nonumber\\
&&
+ \left( {\partial f\over\partial t} ,  {\partial \widetilde{v}_{\varepsilon m}   \over\partial t}    \right)
-  
a\left( v^{0} {\partial \zeta\over \partial t} ,   {\partial  \widetilde{v}_{\varepsilon m}  \over\partial t}    \right)
-
\left( v^{0} {\partial^{2} \zeta \over\partial t^{2}} ,   {\partial  \widetilde{v}_{\varepsilon m} \over\partial t} \right)
 \  \mbox{\rm  a.e. in $(0, T)$.}
\end{eqnarray}
Let us  estimate now the right side of  (\ref{eq39}). We obtain
\begin{eqnarray*}
\begin{array}{ll}
\displaystyle \left|\int_{\Gamma_0}
\frac{\partial \ell}{\partial t}
\frac{\widetilde{v}_{\varepsilon m} \cdot \frac{\partial\widetilde{v}_{\varepsilon m}}{\partial t}}{\sqrt{\varepsilon^{2}
+|\widetilde{v}_{\varepsilon m}|^{2}}}\,dx'\right|
\\ 
\displaystyle \leq  c(\gamma_0)
\left\| \frac{\partial \ell}{\partial t} \right\|_{L^{2}(\Gamma_0)} \left\| \frac{\partial\widetilde{v}_{\varepsilon m}}{\partial t}\right\|_{{\bf H}^{1}(\Omega)}
\leq \frac{\alpha}{4}\left\| \frac{\partial\widetilde{v}_{\varepsilon m}}{\partial t}\right\|_{{\bf H}^{1}(\Omega)
}^{2}
+\frac{c(\gamma_0)^{2}}{\alpha} \left\| \frac{\partial \ell}{\partial t}\right\|_{L^{2}(\Gamma_0)}^{2}
\end{array}
\end{eqnarray*}
where  we recall that $c(\gamma_0)$ is the norm of the  trace operator from
${\bf H}^{1}(\Omega)$ to ${\bf L}^{2}(\Gamma_0)$,
\begin{eqnarray*}
\left|\left({\partial f\over\partial t},   {\partial\widetilde{v}_{\varepsilon m} \over\partial t}\right)\right|
\leq \left\|{\partial f\over\partial t}\right\|_{{\bf L}^{2}(\Omega)}
     \left\|{\partial\widetilde{v}_{\varepsilon m}\over\partial t}\right\|_{{\bf L}^{2}(\Omega)}
\leq
\left\|{\partial f\over\partial t}\right\|_{{\bf L}^{2}(\Omega)}^{2}
+\frac{1}{4}
\left\|{\partial\widetilde{v}_{\varepsilon m} \over\partial t}\right\|_{{\bf L}^{2}(\Omega)}^{2},
\end{eqnarray*}

\begin{eqnarray*}
\begin{array}{ll}
\displaystyle  \left| 
  a\left(v^{0} {\partial \zeta\over\partial t},  {\partial\widetilde{v}_{\varepsilon m} \over\partial t}\right)\right|
\leq 2 \mu \left|{\partial \zeta\over\partial t} \right|
\|v^{0}\|_{{\bf H}^{1}(\Omega)}
\left\|{\partial\widetilde{v}_{\varepsilon m} \over\partial t} \right\|_{{\bf H}^{1}(\Omega)}
\\
\displaystyle  
\leq \frac{\alpha}{4}\left\|{\partial\widetilde{v}_{\varepsilon m}\over\partial t} \right\|^{2}_{{\bf H}^{1}(\Omega)}
+\frac{ 4 \mu^{2}}{\alpha} \left|{\partial \zeta\over\partial t} \right|^{2}
\|v^{0}\|_{{\bf H}^{1}(\Omega)}^{2},
\end{array}
\end{eqnarray*}
\begin{eqnarray*}
\begin{array}{ll}
\displaystyle  \left|
\left(v^{0}  {\partial^{2} \zeta\over\partial t^{2}} , {\partial\widetilde{v}_{\varepsilon m}\over\partial t}\right)\right|
\leq
\left|{\partial^{2} \zeta\over\partial t^{2}} \right|
\|v^{0}\|_{{\bf L}^{2}(\Omega)}
\left\|{\partial\widetilde{v}_{\varepsilon m} \over\partial t}\right\|_{{\bf L}^{2}(\Omega)}
\\
\displaystyle 
\leq
 \left| {\partial^{2} \zeta\over\partial t^{2}} \right|^{2}
\|v^{0}\|_{{\bf L}^{2}(\Omega)}^{2}
+\frac{1}{4}
\left\|{\partial\widetilde{v}_{\varepsilon m} \over\partial t}\right\|_{{\bf L}^{2}(\Omega)}^{2}.
\end{array}
\end{eqnarray*}
Gathering all these estimates, we infer
\begin{eqnarray*}\label{NS47D}
 \frac{1}{2}\frac{d }{d t}
        \left\|{\partial\widetilde{v}_{\varepsilon m}\over\partial t}\right\|^{2}_{{\bf L}^{2}(\Omega)}
+ {\alpha\over 2}
         \left\| {\partial\widetilde{v}_{\varepsilon m}\over\partial t} \right\|^{2}_{{\bf H}^{1}(\Omega)}
 \leq   A_1 +  \frac{1}{2} \left\|{\partial\widetilde{v}_{\varepsilon m} \over\partial t}\right\|^{2}_{{\bf L}^{2}(\Omega)} \quad \mbox{\rm a.e. in $(0,T)$}
\end{eqnarray*}
with
\begin{eqnarray*}
A_1  = \frac{ c(\gamma_0)^{2}}{\alpha}\left\|{\partial \ell\over\partial t} \right\|_{L^{2}(\Gamma_0)}^{2}
+\left\|{\partial f\over\partial t} \right\|_{{\bf L}^{2}(\Omega)}^{2}
+ \frac{4 \mu^{2}}{\alpha} \left|{\partial \zeta\over\partial t}\right|^{2}
\|v^{0}\|_{{\bf H}^{1}(\Omega)}^{2}
 + \left|{\partial^2 \zeta\over\partial t^2}\right|^{2}
\|v^{0}\|_{{\bf L}^{2}(\Omega)}^{2}.
\end{eqnarray*}
Let $s \in [0, T]$. By integration we have
\begin{eqnarray} \label{NS47Dint}
\begin{array}{ll}
\displaystyle  \left\| {\partial\widetilde{v}_{\varepsilon m} \over\partial t}(s) \right\|^{2}_{{\bf L}^{2}(\Omega)}
+ \alpha
         \int_{0}^{s} \left\| {\partial\widetilde{v}_{\varepsilon m} \over\partial t} \right\|^{2}_{{\bf H}^{1}(\Omega)} \, dt \\ 
\displaystyle \leq    \left\| {\partial\widetilde{v}_{\varepsilon m} \over\partial t}(0) \right\|^{2}_{{\bf L}^{2}(\Omega)} + 2 \int_0^s A_1 \, dt  

   +  \int_{0}^{s}  \left\|{\partial\widetilde{v}_{\varepsilon m} \over\partial t} \right\|^{2}_{{\bf L}^{2}(\Omega)} \, dt.
\end{array}
 \end{eqnarray}
 
 
 Moreover, taking $t=0$ in (\ref{NS35})  we  obtain
\begin{eqnarray*}
&& \left( {\partial \widetilde{v}_{\varepsilon m} \over\partial t}(0),w_{k}\right)
=
-
a \bigl(\widetilde{v}_{\varepsilon m} (0) + v^0 \zeta(0),w_{k} \bigr)
-
\int_{\Gamma_0} \ell(0) \frac{\widetilde{v}_{\varepsilon m} (0) \cdot w_{k} }{\sqrt{\varepsilon^{2}
+ |\widetilde{v}_{\varepsilon m}(0)|^{2}}}\,dx'
\nonumber\\
&&+
\bigl(f(0), w_{k} \bigr)
 - \left( v^{0} \frac{\partial \zeta}{\partial t}(0) , w_{k} \right) \quad \forall k \in \{1, \dots, m\}.
\end{eqnarray*}
Reminding that $\zeta(0)=1$ and $G_0 = v^0$, we get $\widetilde{v}_{\varepsilon m} (0) = \widetilde{v}_{ m}^{ 0} =0 $ for all $m \ge 1$ and 
\begin{eqnarray*}
\int_{\Gamma_0} \ell(0) \frac{\widetilde{v}_{\varepsilon m} (0) \cdot w_{k} }
{\sqrt{\varepsilon^{2}
+ |\widetilde{v}_{\varepsilon m}(0)|^{2}}}\,dx' =0 .
\end{eqnarray*}
We multiply the previous equality by $g'_{\varepsilon k} (0)$ and we add from $k=1$ to $m$.  With Green's formula and (\ref{v0-1})-(\ref{v0}), we get
\begin{eqnarray*}\label{eq4.1}
&& \left\|
 {\partial \widetilde{v}_{\varepsilon m} \over\partial t}(0)\right\|^{2}_{{\bf L}^{2}(\Omega)}
= \int_{\Omega} 2 \mu \sum_{i,j =1}^3 \frac{\partial}{\partial x_j}\bigl( d_{ij}(v^0)  \bigr)
{\partial \widetilde{v}_{i \varepsilon m} \over\partial t}(0)
\, dx
\nonumber\\
&&  +
\left(f(0), {\partial \widetilde{v}_{\varepsilon m} \over\partial t}(0)\right)
 -   \left( v^0 \frac{\partial \zeta}{\partial t}(0),
 {\partial \widetilde{v}_{\varepsilon m} \over\partial t}(0) \right).
\end{eqnarray*}
It follows that
\begin{eqnarray*}
\left\|
{\partial \widetilde{v}_{\varepsilon m} \over\partial t}(0)
\right\|_{{\bf L}^{2}(\Omega)}
\leq A_0  =  2 \sqrt{3} \mu \| v^0 \|_{{\bf H}^{2}(\Omega)} +
\bigl\|f(0) \bigr\|_{{\bf L}^{2}(\Omega)}
 +\left|\frac{\partial \zeta}{\partial t}(0)\right| \| v^0 \|_{{\bf L}^{2}(\Omega)} .
\end{eqnarray*}
Finally, with (\ref{NS47Dint}) we get
\begin{eqnarray*} 
\begin{array}{ll}
\displaystyle  \left\| {\partial\widetilde{v}_{\varepsilon m} \over\partial t}(s) \right\|^{2}_{{\bf L}^{2}(\Omega)}
+ \alpha
         \int_{0}^{s} \left\| {\partial\widetilde{v}_{\varepsilon m} \over\partial t} \right\|^{2}_{{\bf H}^{1}(\Omega)} \, dt  
         \\ 
\displaystyle 
         \leq  \Bigl(  A_0^2 +  \frac{2 c(\gamma_0)^{2}}{\alpha}\left\|{\partial \ell\over\partial t} \right\|_{L^2(0,T; L^{2}(\Gamma_0))}^{2} 
         +2 \left\|{\partial f\over\partial t} \right\|_{L^2(0,T; {\bf L}^{2}(\Omega))}^{2}
         \\ 
\displaystyle   
+ \frac{8 \mu^{2}}{\alpha} T \left\|{\partial \zeta\over\partial t}\right\|^{2}_{{\cal C}([0,T])}
\|v^{0}\|_{{\bf H}^{1}(\Omega)}^{2} 
+ 2 T \left\|{\partial^2 \zeta\over\partial t^2}\right\|^{2}_{{\cal C}([0,T])}
\|v^{0}\|_{{\bf L}^{2}(\Omega)}^{2} 
\Bigr) \mbox{exp} (s) \quad \forall s \in [0,T].
\end{array}
 \end{eqnarray*}
Hence
 $\displaystyle \left( {\partial\widetilde{v}_{\varepsilon m} \over\partial t} \right)_{m\ge 1, \varepsilon>0}$ is  bounded in $L^{\infty} \bigl( 0,T; {\bf L}^{2}(\Omega) \bigr)$ and in $L^2 \bigl( 0,T; {\bf H}^{1}(\Omega) \bigr)$ uniformly with respect to $m$ and $\varepsilon$.
 
It follows that we can pass to the limit  as $m$ tends to $+ \infty$.
We obtain  
\begin{eqnarray} \label{derivee_v}
\frac{\partial \widetilde v_{\varepsilon m}}{\partial t} \rightharpoonup  \frac{\partial \widetilde v_{\varepsilon}}{\partial t} \quad \hbox{ weakly star in $ L^{\infty} \bigl( 0,T; {\bf L}^2 (\Omega) \bigr)$ and weakly in $ L^2 (0,T; {\cal V}_{0 div})$.}
\end{eqnarray}
Hence $\displaystyle  
\widetilde v_{\varepsilon} \in W^{1, \infty}  \bigl( 0,T; {\bf L}^2 (\Omega) \bigr) \cap W^{1,2} (0,T; {\cal V}_{0 div})$ and with a straightforward adaptation of Lemma \ref{lemma3} we get 
 \begin{eqnarray*} 
\begin{array}{ll}
\displaystyle 
\left| \bigl\langle ( { p}_{\varepsilon}  , \widetilde w ),  \chi \bigr\rangle_{{\cal D}'(0,T), {\cal D}(0,T) } \right|
\displaystyle  \le \left\| \frac{ \partial \widetilde{v}_{\varepsilon}}{\partial t} \right\|_{L^{\infty} (0,T; {\bf L}^{2} (\Omega))} \| P(w) \chi \|_{L^{1} (0,T; {\bf L}^{2}(\Omega))} \\
\displaystyle + 2 \mu  \bigl( \| \widetilde{v}_{\varepsilon} \|_{L^{\infty} (0,T;{\bf H}^1(\Omega) )} +  \|v^0\|_{{\bf H}^1(\Omega)} \| \zeta \|_{C^0([0,T]} \bigr) \| P(w) \chi \|_{L^{1} (0,T; {\bf H}^{1}(\Omega))} \\
\displaystyle  + \left( \|f\|_{L^{\infty} (0,T;{\bf L}^{2}(\Omega))} + \|v^0\|_{{\bf L}^2(\Omega)} \left\| \frac{\partial \zeta}{\partial t} \right\|_{C^0([0,T]} \right) \| P(w) \chi \|_{L^{1} (0,T; {\bf L}^{2}(\Omega))}
\\
\displaystyle \forall \chi \in  {\cal D} (0,T).
\end{array}
\end{eqnarray*}
Hence $p_{\varepsilon} \in L^{\infty}  \bigl(0,T; L^2_0 (\Omega) \bigr)$
and the sequence $\displaystyle \left( \frac{\partial \widetilde v_{\varepsilon}}{\partial t} , p_{\varepsilon} \right)_{\varepsilon >0}$ is uniformly bounded in $ \Bigl(  L^{\infty} \bigl( 0,T; {\bf L}^2 (\Omega) \bigr) \cap L^2 (0,T; {\cal V}_{0 div}) \Bigr) \times L^{\infty}  \bigl(0,T; L^2_0 (\Omega) \bigr)$.
Moreover, for all $\varphi \in {\cal V}_{0 div}$ and $m \ge 1$ we may define $\varphi_{m}$ as the othogonal projection of $\varphi$ with respect to the  inner product of
${\bf H}^{1}(\Omega)$
 on   ${\rm Span} \bigl\{w_{1},\ldots,w_{m} \bigr\}$. We multiply  (\ref{eq38bis})
 by $\chi$ and we integrate on $(0,T)$. We obtain 
 \begin{eqnarray} \label{eq38ter}
&& - \int_0^T \left(\frac{\partial \widetilde{v}_{\varepsilon m}}{\partial t} , \varphi_m \right) \chi' \, dt
 +
\int_0^T a\left( {\partial \widetilde{v}_{\varepsilon m} \over\partial t} , \varphi_m \right) \chi \, dt 
\nonumber\\
&& 
+ \int_0^T \int_{\Gamma_0} {\partial \ell\over\partial t}
\frac{\widetilde{v}_{\varepsilon m}  \cdot \varphi_m
}{\sqrt{\varepsilon^{2}+|\widetilde{v}_{\varepsilon m} |^{2}}} \chi \,dx' dt
\nonumber\\
&& 
+ \int_0^T \int_{\Gamma_0}\ell
\left(
\frac{{\partial\widetilde{v}_{\varepsilon m}  \over\partial t} \cdot \varphi_m}{\sqrt{\varepsilon^{2}+|\widetilde{v}_{\varepsilon m}|^{2}}}
- \frac{ \left( \widetilde{v}_{\varepsilon m} \cdot {\partial\widetilde{v}_{\varepsilon m}  \over\partial t} \right) ( \widetilde{v}_{\varepsilon m} \cdot \varphi_m)}{ \bigl(\varepsilon^{2}+|\widetilde{v}_{\varepsilon m}|^{2}\bigr)^{3/2}}
 \right) \chi \, dx' dt
\\
&& =
\int_0^T \left( {\partial f\over\partial t} , \varphi_m \right) \chi \, dt 
-
\int_0^T a \left( v^{0} {\partial \zeta\over \partial t} , \varphi_m \right) \chi \, dt 
\nonumber\\
&& 
-
\int_0^T \left( v^{0} {\partial^2 \zeta\over \partial t^2} , \varphi_m \right) \chi \,  dt 
 \quad \mbox{\rm  for all $m \ge 1$. } \nonumber
\end{eqnarray}
Since the trace operator is a linear continuous mapping from ${\bf H}^1(\Omega)$ into ${\bf L}^4(\partial \Omega)$ we infer from (\ref{derivee_v}) that
 \begin{eqnarray*} 
\frac{\partial \widetilde v_{\varepsilon m}}{\partial t} \rightharpoonup  \frac{\partial \widetilde v_{\varepsilon}}{\partial t} \quad \hbox{  weakly in $ L^2 (0,T; {\bf L}^4 \bigl(\Gamma_0) \bigr)$}
\end{eqnarray*}
under the convention that we identify $\displaystyle \frac{\partial \widetilde v_{\varepsilon m}}{\partial t}$ (resp. $\displaystyle \frac{\partial \widetilde v_{\varepsilon }}{\partial t}$) with its trace on $\Gamma_0$.  Moreover, with (\ref{conver6}) we have 
\begin{eqnarray*}
\begin{array}{ll}
\displaystyle \ell
\left(
\frac{ \varphi_m}{\sqrt{\varepsilon^{2}+|\widetilde{v}_{\varepsilon m}|^{2}}}
- \frac{  \widetilde{v}_{\varepsilon m} ( \widetilde{v}_{\varepsilon m} \cdot \varphi_m)}{ \bigl(\varepsilon^{2}+|\widetilde{v}_{\varepsilon m}|^{2}\bigr)^{3/2}}
 \right) \\
 \displaystyle 
 \to 
 \ell
\left(
\frac{ \varphi }{\sqrt{\varepsilon^{2}+|\widetilde{v}_{\varepsilon}|^{2}}}
- \frac{  \widetilde{v}_{\varepsilon } ( \widetilde{v}_{\varepsilon } \cdot \varphi)}{ \bigl(\varepsilon^{2}+|\widetilde{v}_{\varepsilon }|^{2}\bigr)^{3/2}}
 \right) \quad \hbox{ strongly in $L^2 (0,T; {\bf L}^{4/3} \bigl(\Gamma_0) \bigr)$.}
 \end{array}
 \end{eqnarray*}
Hence we can pass to the limit in all the terms of (\ref{eq38ter}) and we get
\begin{eqnarray*} 
&& - \int_0^T \left( {\partial \widetilde{v}_{\varepsilon} \over\partial t } , \varphi  \right) \chi' \, dt 
 +
\int_0^T a\left( {\partial \widetilde{v}_{\varepsilon } \over\partial t} , \varphi \right) \chi \, dt
+ \int_0^T \int_{\Gamma_0} {\partial \ell\over\partial t}
\frac{\widetilde{v}_{\varepsilon }  \cdot \varphi
}{\sqrt{\varepsilon^{2}+|\widetilde{v}_{\varepsilon } |^{2}}}  \chi \,dx' dt
%
\nonumber\\
&& 
+ \int_0^T \int_{\Gamma_0}\ell
\left(
\frac{{\partial\widetilde{v}_{\varepsilon }  \over\partial t} \cdot \varphi }{\sqrt{\varepsilon^{2}+|\widetilde{v}_{\varepsilon }|^{2}}}
- \frac{ \left( \widetilde{v}_{\varepsilon } \cdot {\partial\widetilde{v}_{\varepsilon }  \over\partial t} \right) ( \widetilde{v}_{\varepsilon } \cdot \varphi )}{ \bigl(\varepsilon^{2}+|\widetilde{v}_{\varepsilon }|^{2}\bigr)^{3/2}}
 \right) \chi \, dx' dt
\nonumber\\
&& =
\int_0^T \left( {\partial f\over\partial t} , \varphi \right) \chi \, dt 
-
\int_0^T a \left( v^{0} {\partial \zeta\over \partial t} , \varphi \right) \chi \, dt
-
\int_0^T \left( v^{0} {\partial^2 \zeta\over \partial t^2} , \varphi \right) \chi \, dt
\end{eqnarray*}
for all $\varphi \in {\cal V}_{0 div}$ and for all $\chi \in {\cal D}(0,T)$. Let us choose now  $\varphi \in {\bf H}^1_{0 div} (\Omega)$. We obtain
\begin{eqnarray*} 
&& \left\langle  \frac{d}{dt} \left( {\partial \widetilde{v}_{\varepsilon} \over\partial t } , \varphi  \right),  \chi \right\rangle_{{\cal D}' (0,T), {\cal D}(0,T)} 
 +
\int_0^T a\left( {\partial \widetilde{v}_{\varepsilon } \over\partial t} , \varphi \right) \chi \, dt
\nonumber\\
&& =
\int_0^T \left( {\partial f\over\partial t} , \varphi \right) \chi \, dt 
-
\int_0^T a \left( v^{0} {\partial \zeta\over \partial t} , \varphi \right) \chi \, dt
\nonumber\\
&& 
-
\int_0^T \left( v^{0} {\partial^2 \zeta\over \partial t^2} , \varphi \right) \chi \, dt
 \quad  \forall \chi \in {\cal D}(0,T)
\end{eqnarray*}
and we infer from the previous estimates that $\displaystyle  \frac{\partial^2 {\widetilde v}_{\varepsilon} }{\partial t^2} $ belongs to $  L^2 \bigl(0,T; \bigl( {\bf H}^1_{0 div}(\Omega) \bigr)' \bigr)$ and is uniformly bounded with respect to $\varepsilon$ in $  L^2 \bigl(0,T; \bigl( {\bf H}^1_{0 div}(\Omega) \bigr)' \bigr)$.

 Finally, by using the techniques of Theorem \ref{tresca-existence}, we may conclude that the unique solution $(\widetilde v, p)$ of problem $(P)$ satisfies 
\begin{eqnarray*}
\frac{\partial \widetilde v}{\partial t} \in L^{\infty} \bigl( 0,T; {\bf L}^2 (\Omega) \bigr) \cap L^2 (0,T; {\cal V}_{0 div}), \quad p \in L^{\infty}  \bigl(0,T; L^2_0 (\Omega) \bigr)
\end{eqnarray*}
and
\begin{eqnarray*}
\quad \frac{\partial^2 \widetilde v}{\partial t^2} \in  L^2 \bigl(0,T; \bigl( {\bf H}^1_{0 div}(\Omega) \bigr)' \bigr).
\end{eqnarray*}
 
\end{proof}

As a corollary we obtain

\begin{proposition} \label{proposition1}
Let us assume that  (\ref{zeta})-(\ref{v0-1})-(\ref{v0})-(\ref{ell})  hold.
 Then the stress tensor  $\sigma = - p {\rm Id} + 2 \mu D(\widetilde v + v_0 \zeta)$ belongs to  $ L^{\infty} \bigl( 0,T; \bigl( L^2(\Omega) \bigr)^{3 \times 3} \bigr)$ and $div(\sigma)$ belongs to $ L^{\infty} \bigl( 0, T; {\bf L}^{2} (\Omega) \bigr)$.
\end{proposition}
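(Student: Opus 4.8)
The statement is genuinely a corollary of Theorem~\ref{tresca-regularity} together with the stress formulation of the momentum balance already obtained in the course of the proof of Theorem~\ref{tresca-existence}; the plan is simply to combine the $L^{\infty}$-in-time bounds that are now at hand. First I would deal with $\sigma$ itself. By Theorem~\ref{tresca-regularity} we already know that $p \in L^{\infty}\bigl(0,T;L^2_0(\Omega)\bigr)$, so it only remains to bound $D(\widetilde v + v^0 \zeta)$ in $L^{\infty}\bigl(0,T;\bigl(L^2(\Omega)\bigr)^{3 \times 3}\bigr)$. Since $v^0 \in {\bf H}^2(\Omega)$ by (\ref{v0-1}) and $\zeta \in {\cal C}^{\infty}([0,T])$ by (\ref{zeta}), the contribution $v^0 \zeta$ obviously lies in $L^{\infty}\bigl(0,T;{\bf H}^1(\Omega)\bigr)$. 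For $\widetilde v$ itself I would combine the two facts already proved, namely $\widetilde v \in L^2\bigl(0,T;{\cal V}_{0div}\bigr)$ (Theorem~\ref{tresca-existence}) and $\partial \widetilde v/\partial t \in L^2\bigl(0,T;{\cal V}_{0div}\bigr)$ (Theorem~\ref{tresca-regularity}): this gives $\widetilde v \in H^1\bigl(0,T;{\cal V}_{0div}\bigr) \hookrightarrow {\cal C}^0\bigl([0,T];{\cal V}_{0div}\bigr)$, hence $\widetilde v \in L^{\infty}\bigl(0,T;{\bf H}^1(\Omega)\bigr)$ and $D(\widetilde v) \in L^{\infty}\bigl(0,T;\bigl(L^2(\Omega)\bigr)^{3 \times 3}\bigr)$. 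Adding the pressure term yields $\sigma = -p\,{\rm Id} + 2\mu D(\widetilde v + v^0 \zeta) \in L^{\infty}\bigl(0,T;\bigl(L^2(\Omega)\bigr)^{3 \times 3}\bigr)$.

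Then I would identify $div(\sigma)$ through the momentum equation. Passing to the limit as $\varepsilon \to 0$ in (\ref{pression.3bis}) — which is legitimate thanks to the uniform bounds and convergences already collected in the proofs of Theorems~\ref{tresca-existence} and~\ref{tresca-regularity} — and integrating by parts in time, one obtains
\begin{eqnarray*}
div(\sigma) = \frac{\partial \widetilde v}{\partial t} - f + v^0 \frac{\partial \zeta}{\partial t} \quad \mbox{in } {\cal D}'\bigl(\Omega \times (0,T)\bigr).
\end{eqnarray*}
Each term of the right-hand side belongs to $L^{\infty}\bigl(0,T;{\bf L}^2(\Omega)\bigr)$: $\partial \widetilde v/\partial t$ by Theorem~\ref{tresca-regularity}; $f$ because $f \in W^{1,2}\bigl(0,T;{\bf L}^2(\Omega)\bigr) \hookrightarrow {\cal C}^0\bigl([0,T];{\bf L}^2(\Omega)\bigr)$ by (\ref{ell}); and $v^0\,\partial \zeta/\partial t$ because $v^0 \in {\bf L}^2(\Omega)$ and $\partial \zeta/\partial t \in {\cal C}^{\infty}([0,T])$ by (\ref{zeta}). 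Hence $div(\sigma) \in L^{\infty}\bigl(0,T;{\bf L}^2(\Omega)\bigr)$, which finishes the argument.

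There is no real obstacle here, the proposition being genuinely a corollary; the only two points deserving some care are, first, the upgrade of $\widetilde v$ from $L^2\bigl(0,T;{\cal V}_{0div}\bigr)$ to ${\cal C}^0\bigl([0,T];{\cal V}_{0div}\bigr)$ via the time-derivative estimate of Theorem~\ref{tresca-regularity}, and, second, the fact that the distribution $div(\sigma)$ — a priori only known to live in $H^{-1}\bigl(0,T;{\bf L}^2(\Omega)\bigr)$, just as $\sigma$ a priori only lives in $H^{-1}\bigl(0,T;\bigl(L^2(\Omega)\bigr)^{3 \times 3}\bigr)$ — is actually represented by the ${\bf L}^2(\Omega)$-valued function furnished by the momentum balance, so that its $L^{\infty}$-in-time regularity is inherited from that of $\partial \widetilde v/\partial t$, $f$ and $v^0\,\partial \zeta/\partial t$.
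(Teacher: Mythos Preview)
Your proposal is correct and follows essentially the same route as the paper: the first assertion is deduced from the $L^{\infty}$-in-time bounds on $p$ and $\widetilde v$ provided by Theorem~\ref{tresca-regularity}, and the second from the momentum balance identifying $div(\sigma)$ with $\partial \widetilde v/\partial t - f + v^0\,\partial\zeta/\partial t$. The only cosmetic difference is that the paper obtains this identity directly from (\ref{NS-25}) by taking $\varphi\in\bigl({\cal D}(\Omega)\bigr)^3$ and testing with $\pm\chi$ to turn the inequality into an equality, whereas you reach it by passing to the limit in (\ref{pression.3bis}); both yield the same conclusion.
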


\begin{proof}
The first part of the result is an immediate consequence of Theorem \ref{tresca-regularity}. Now let us choose $\varphi \in \bigl({\cal D}(\Omega)\bigr)^3$ and $\chi \in {\cal D}(0, T)$. With (\ref{NS-25}) we have
\begin{eqnarray*}
&&\left\langle {d \over d t} \left( \widetilde{v}, \varphi \right) ,\pm \chi\right\rangle_{{\cal D}'(0,T), {\cal D}(0,T)} 
+ \int_0^T a(\widetilde{v}+ v^0 \zeta, \pm \varphi \chi) \, dt 
 \nonumber\\
&&
- \bigl\langle \bigl(p , div(\varphi)  \bigr), \pm \chi \bigr\rangle_{{\cal D}'(0,T), {\cal D}(0,T)} 
 \nonumber\\
&&
  \geq
\bigl\langle (f,\varphi ), \pm \chi \bigr\rangle_{{\cal D}'(0,T), {\cal D}(0,T)} 
 - \left\langle \left(v^{0} \frac{\partial \zeta}{\partial t},\varphi \right),
 \pm \chi \right\rangle_{{\cal D}'(0,T), {\cal D}(0,T)}  .
\end{eqnarray*}
It follows that
\begin{eqnarray*}\label{sigma}
\begin{array}{ll}
\displaystyle 
\int_0^T \int_{\Omega} {\partial  \widetilde{v} \over\partial t} \varphi \chi \, dx dt 
+ \int_0^{T} \int_{\Omega} \sum_{i, j =1}^3  \sigma_{ij} \frac{\partial \varphi_i}{\partial x_j} \chi \, dx dt
\\
\displaystyle  =
\int_0^ T \int_{\Omega} f \varphi \chi \, dx dt 
 - \int_0^ T \int_{\Omega} v^{0} \frac{\partial \zeta}{\partial t} \varphi \chi \, dx dt 
\end{array}
\end{eqnarray*}
and thus
\begin{eqnarray*}
&& \left|  \int_0^{T} \int_{\Omega} \sum_{i, j = 1}^3 \sigma_{ij} \frac{\partial \varphi_i}{\partial x_j} \chi \, dx dt  \right| 
 \le
\left(
\left\| \frac{\partial \widetilde{v} }{\partial t} \right\|_{L^{\infty} (0, T; {\bf L}^2(\Omega))}
+ \|f\|_{L^{\infty} (0, T; {\bf L}^2(\Omega))} \right. \\
&& \left.
+ \left\| \frac{\partial \zeta }{\partial t} \right\|_{C^{0} ([0, T]} \|v^0\|_{{\bf L}^2(\Omega)} \right) \| \varphi \chi \|_{L^1 (0, T; {\bf L}^2(\Omega))}
.
\end{eqnarray*}
Hence $div(\sigma) \in \bigl(  L^1\bigl( 0, T; {\bf L}^{2} (\Omega) \bigr) \bigr)'= L^{\infty}\bigl( 0, T; {\bf L}^{2} (\Omega) \bigr)$.

\end{proof}


\renewcommand{\theequation}{4.\arabic{equation}}
 \setcounter{equation}{0}

\section{The generalized Coulomb's friction case} \label{coulomb}

Let us assume now that ${\cal F}$ can be decomposed as 
\begin{eqnarray}\label{coulomb.1}
 {\cal F}(x', t, \sigma) ={\cal F}^0(x',t) +  {\cal F}^{\sigma} (x',t)   \int_{0}^{t} S(t-s) \bigl| {\cal R}(\sigma^3(\cdot, s))(x') \bigr| \, ds 
\end{eqnarray}
 for almost every $x' \in \Gamma_0$ and for all $t \in [0,T]$, where ${\cal R}$ is a regularization operator
 which will be described below, $\sigma^3$ is the vector $(\sigma_{3j})_{1\le j \le 3}$ and 
\begin{eqnarray} \label{coulomb.2}
{\cal F}^0 \in W^{1,2} \bigl( 0,T; L^2 (\Gamma_0; \br^+) \bigr), \quad  {\cal F}^{\sigma} \in W^{1, p} \bigl( 0,T; L^{2} (\Gamma_0; \br^+) \bigr) \ \hbox{ with $p>2$,}
\end{eqnarray}
and
\begin{eqnarray} \label{coulomb.2bis}
 S \in C^1(\br^+; \br^+).
\end{eqnarray}
Then, possibly modifying ${\cal F}^0$ and ${\cal F}^{\sigma}$ on a negligible subset of $[0,T]$, we have  
\begin{eqnarray*}
{\cal F}^0 \in C^0 \bigl( [0,T] ; L^2 (\Gamma_0; \br^+) \bigr), \  {\cal F}^{\sigma} \in C^0 \bigl( [0,T] ; L^{2} (\Gamma_0; \br^+) \bigr) .
\end{eqnarray*}
Similarly, recalling that $f \in W^{1,2} \bigl(0,T; {\bf L}^2(\Omega) \bigr)$, possibly modifying $f$ on a negligible subset of $[0,T]$, we have also $f \in C^0 \bigl( [0,T]; {\bf L}^2(\Omega) \bigr)$.

\bigskip

We will prove an existence result for problem $(P)$ by using a successive approximation technique. Indeed, for any given Tresca's friction threshold $ \ell_k \in W^{1,2} \bigl( 0,T; L^2 (\Gamma_0; \br^+) \bigr)$, we consider 

\noindent {\bf Problem $(P_k)$}
Find
$$\widetilde{v}_k\in L^{2} \bigl(0,T;{\cal V}_{0div} \bigr)\cap L^{\infty}\bigl(0,T;{\bf L}^{2}(\Omega) \bigr),\  \,
p_k\in H^{-1}\bigl(0,T;L^{2}_{0}(\Omega) \bigr)$$
such that, for all $ \varphi\in {\cal V}_{0}$ and for all $\chi\in {\cal D}(0,T)$, we have
\begin{equation}\label{NS-25-k}
\begin{array}{ll}
\displaystyle
\left\langle \frac{d}{dt} \left( \widetilde{v}_k, \varphi \right) , \chi\right\rangle_{{\cal D}'(0,T), {\cal D}(0,T)}  
 - \bigl\langle \bigl(p_k,div(\varphi)  \bigr), \chi \bigr\rangle_{{\cal D}'(0,T), {\cal D}(0,T)} \\
\displaystyle + \int_0^T a(\widetilde{v}_k,\varphi \chi ) \, dt 
+  \int_0^T \int_{\Gamma_0} \ell_k \bigl( \bigl| \widetilde{v}_k+ \varphi \chi  \bigr| - \bigl| \widetilde{v}_k \bigr|  \bigr) \, dx' dt \\
\displaystyle  \geq
\bigl\langle (f,\varphi ), \chi \bigr\rangle_{{\cal D}'(0,T), {\cal D}(0,T)} -  \int_0^T a(v^{0}\zeta ,\varphi \chi) \, dt  
\displaystyle - \left\langle \left(v^{0}\frac{\partial \zeta}{\partial t},\varphi \right),
\chi \right\rangle_{{\cal D}'(0,T ), {\cal D}(0,T)} 
\end{array}
\end{equation}
and 
\begin{eqnarray}\label{NS-25init-k}
\widetilde{v}_k(0, \cdot) = \widetilde{v}^{0} =0.
\end{eqnarray}

With the results of Section \ref{tresca} we know that problem $(P_k)$ admits an unique solution and  $\sigma_k \eqldef - p_k {\rm Id} + 2 \mu D(\widetilde v_k + v_0 \zeta)$ belongs to  $ L^{\infty} \bigl( 0,T; \bigl( L^2(\Omega) \bigr)^{3 \times 3} \bigr)$ with $div(\sigma_k) \in L^{\infty} \bigl( 0, T; {\bf L}^{2} (\Omega) \bigr)$. 
For all $i \in \{1, 2, 3 \}$ we denote by $\sigma_k^i$ the vector $(\sigma_{ ij k})_{1 \le j \le 3}$ and we introduce the following functional space 
\begin{eqnarray*}
E(\Omega) = \bigl\{ \widetilde \sigma \in {\bf L}^2(\Omega); \  div( \widetilde \sigma) \in L^2(\Omega) \}
\end{eqnarray*}
endowed with the norm $\| \cdot \|_{E(\Omega)}$ given by
\begin{eqnarray*}
\| \widetilde \sigma \|_{E(\Omega)} = \bigl( \| \widetilde \sigma \|^2_{{\bf L}^2(\Omega)} + \bigl\| div(\widetilde \sigma) \bigr\|^2_{L^2(\Omega)} \bigr)^{1/2} \quad \forall \widetilde \sigma \in E(\Omega).
\end{eqnarray*}
With the previous results we know that $\sigma_k^i \in L^{\infty} \bigl(0,T; E(\Omega) \bigr)$ for all $i \in \{1, 2, 3 \}$ and for all $k \ge 0$. Let us recall that there exists a trace operator $\gamma_n : E(\Omega) \to H^{-1/2} (\partial \Omega)$ such that, for all $\widetilde \sigma \in E(\Omega)$,  the following Green's formula holds
\begin{eqnarray*}
\int_{\Omega} div (\widetilde \sigma) \psi \, dx + \int_{\Omega} \widetilde \sigma \cdot \nabla \psi \, dx = \bigl\langle \gamma_n(\widetilde \sigma), \psi \bigr\rangle_{H^{-1/2} (\partial \Omega), H^{1/2}(\partial \Omega) } \quad \forall \psi \in H^1(\Omega)
\end{eqnarray*}
and $\gamma_n (\widetilde \sigma)$ is called the normal component of $\widetilde \sigma$ on $\partial \Omega$ (see \cite{girault} for instance). Then, following \cite{demkowicz} we introduce a regularization operator ${\cal R} : E ( \Omega) \to C^0(\Gamma_0)$ given by
\begin{eqnarray} \label{calR}
\begin{array}{ll} 
\displaystyle {\cal R} (\widetilde \sigma) (x')  = \bigl\langle \gamma_n ( \widetilde \sigma ) , f_{x'} \bigr\rangle_{H^{-1/2} (\partial \Omega), H^{1/2}(\partial \Omega) } \\
\displaystyle = \int_{\Omega} div (\widetilde \sigma) f_{x'} \, dx + \int_{\Omega} \widetilde \sigma \cdot \nabla f_{x'} \, dx \quad \forall x' \in \Gamma_0, \ \forall \widetilde \sigma \in E(\Omega)
\end{array}
\end{eqnarray} 
where $f$ is a function belonging to $ C^{\infty}_0 (\Gamma_0 \times \br^3; \br^+)$ and $f_{x'}: \Omega \to \br$ is defined by $f_{x'}(x)  = f(x', x' - x)$ for all $x \in \Omega$ and for all $x' \in \Gamma_0$.
%
Since $\gamma_n (\widetilde \sigma) = \widetilde \sigma \cdot n$ on $\partial \Omega$ for all $\widetilde \sigma \in \bigl( {\cal D} ({\overline \Omega}) \bigr)^3$ and $n=(0,0, -1)$ on $\Gamma_0$, we obtain that 
\begin{eqnarray*}
\sigma_n = \sum_{i, j =1}^3 \sigma_{ij} n_i n_j =  - \gamma_n (\sigma^3) \quad \hbox{ on $\Gamma_0$}
\end{eqnarray*}
for any $\sigma \in \bigl( {\cal D} ({\overline \Omega}) \bigr)^{3 \times 3}$. Then  we let
\begin{eqnarray*}\label{coulomb.3}
\ell_{k+1} (x', t) =  {\cal F}(x', t, \sigma_k) ={\cal F}^0(x',t) +  {\cal F}^{\sigma} (x',t)   \int_{0}^{t} S(t-s) \bigl| {\cal R}(\sigma_{k}^3(\cdot, s)) (x') \bigr| \, ds 
\end{eqnarray*}
 for almost every $x' \in \Gamma_0$ and for all $t \in [0,T]$, thus $\ell_{k+1} \in W^{1,2} \bigl( 0,T; L^2 (\Gamma_0; \br^+) \bigr)$. Starting from a given $\ell_0 \in W^{1,2} \bigl( 0,T; L^2 (\Gamma_0, \br^+) \bigr)$, we construct a sequence $(\widetilde v_k, p_k, \sigma_k)_{k \ge 0}$ such that 
\begin{eqnarray*}
 \widetilde v_k  \in W^{1,\infty} \bigl( 0,T; {\bf L}^2 (\Omega) \bigr) \cap W^{1,2} (0,T; {\cal V}_{0 div}), 
 \end{eqnarray*}
 \begin{eqnarray*}
  p_k \in L^{\infty} \bigl(0,T; L^2_0 (\Omega) \bigr),
  \end{eqnarray*}
  \begin{eqnarray*}
\sigma_k \in L^{\infty} \bigl( 0,T; \bigl( L^2(\Omega) \bigr)^{3 \times 3} \bigr), \quad div(\sigma_k) \in L^{\infty} \bigl( 0, T; {\bf L}^{2} (\Omega) \bigr)
\end{eqnarray*}
and we expect that $(\widetilde v_k, p_k)_{k \ge 0}$ converges towards a solution of problem $(P)$. 

\bigskip

More precisely, let us assume that problem $(P)$ admits a solution $(\bar v, \bar p)$ on some time interval $[0, \tau]$, with $0 \le \tau <T$. 
Let $\bar \sigma = - \bar p {\rm Id} + 2 \mu D(\bar v + v^0 \zeta)$ and assume that $\bar \sigma \in L^{\infty} \bigl( 0,T; \bigl( L^2(\Omega) \bigr)^{3 \times 3} \bigr)$, $ div(\bar \sigma) \in L^{\infty} \bigl( 0, T; {\bf L}^{2} (\Omega) \bigr)$.
We define $\ell_0$ by
\begin{eqnarray*} \label{coulomb.4}
\ell_0 (x',t) =
{\cal F}^0(x',t) +  {\cal F}^{\sigma} (x',t)   \int_{0}^{t} S(t-s) \bigl| {\cal R}( \bar \sigma^{3}(\cdot, s)) (x') \bigr| \, ds 
\end{eqnarray*}
for almost every $x' \in \Gamma_0$ and for all $t \in [0,\tau]$ and 
\begin{eqnarray*} \label{coulomb.5}
\ell_0 (x',t) = 
{\cal F}^0(x',t)  +  {\cal F}^{\sigma} (x',\tau)   \int_{0}^{\tau} S(\tau -s) \bigl| {\cal R}(\bar \sigma^{3}(\cdot, s)) (x') \bigr| \, ds 
\end{eqnarray*}
for almost every $x' \in \Gamma_0$ and for all $t \in [\tau, T]$. 

\bigskip

Let $(\widetilde v_0, p_0)$ be the unique solution of problem $(P_0)$ 
on $[0,T]$. Then $(\widetilde v_0, p_0)$ satisfies also problem $(P)$ with $  {\cal F} ( \cdot, \cdot, \bar \sigma) =  {\ell_0}_{|[0,\tau]} $ on $[0,\tau]$ and, by uniqueness of the solution (see Theorem \ref{tresca-existence}), we infer that $\widetilde v_0 = \bar v$ and $p_0 = \bar p$ on $(0,\tau)$.

Next let $(\ell_k)_{k \ge 0}$ be given by the iteration procedure described previously i.e.
\begin{eqnarray*}
\ell_{k+1} (x',t)  =  {\cal F}(x', t, \sigma_{k}) ={\cal F}^0(x',t) +  {\cal F}^{\sigma} (x',t)   \int_{0}^{t} S(t-s) \bigl| {\cal R}( \sigma_{k}^3(\cdot, s)) (x') \bigr| \, ds 
\end{eqnarray*}
for almost every $x' \in \Gamma_0$ and for all $t \in [0,T]$, with $\sigma_k = - p_k {\rm Id} + 2 \mu D(\widetilde v_k + v^0 \zeta)$  for all $k \ge 0$. Let $(\widetilde v_1, p_1)$ be the unique solution of problem $(P_1)$ 
on $[0,T]$. We observe that $ \sigma_0 = \bar \sigma $ on $(0,\tau)$, hence
$\ell_1=\ell_0$ on $(0,\tau)$ and we obtain $\widetilde v_1 = \widetilde v_0 = \bar v$ and $p_1 = p_0= \bar p$ on $(0,\tau)$. By an immediate induction we get
\begin{eqnarray*}
\ell_k=\ell_0, \quad \widetilde v_k = \bar v, \quad  p_k = \bar p \quad \mbox{on $(0,\tau)$, for all $k \ge 0$.}
\end{eqnarray*}

\bigskip

Now let $\tau' \in (\tau, T]$. By using the results of Theorem \ref{tresca-existence}, Theorem \ref{tresca-regularity} and Proposition \ref{proposition1}
we obtain that there exists a positive real number $C_{ \rm data} = C_{\rm data}(\mu, \zeta, f, v^0)$, depending only on the data $\mu$, $\zeta$, $f$ and $v^0$, such that $\widetilde v_k $, $\displaystyle \frac{\partial^2 \widetilde v_k}{\partial t^2}$, $p_k$, $\sigma_k$ and $div(\sigma_k)$ are bounded by $C_{\rm data} \bigl( 1 + \| \ell_k\|_{W^{1,2}(0, \tau'; L^2(\Gamma_0))} \bigr) $ in $W^{1,\infty} \bigl(0, \tau'; {\bf L}^2(\Omega) \bigr) \cap W^{1,2} (0, \tau'; {\cal  V}_{0 div}) $, $L^2 \bigl(0, \tau', \bigl({\bf H}^1_{0 div} (\Omega) \bigr)' \bigr)$, $L^{\infty} \bigl(0, \tau'; L^2_0(\Omega) \bigr)$, $L^{\infty} \bigl(0, \tau';  (L^2(\Omega)^{3\times 3} \bigr)$ and $L^{\infty} \bigl(0, \tau'; {\bf L}^2(\Omega) \bigr)$ respectively.
It follows that 
%
%
\begin{eqnarray*} 
\| \sigma_k^3 \|_{L^{\infty}(0, \tau'; E( \Omega))} \le \sqrt{2} C_{\rm data} \bigl( 1 + \| \ell_k\|_{W^{1,2}(0, \tau'; L^2(\Gamma_0))} \bigr) \quad \forall k \ge 0.
\end{eqnarray*}


Moreover, let $\widetilde \ell_k = \ell_k - {\cal F}^0$ for all $k \ge 0$. We have:

\begin{proposition} \label{estimation}
Under the previous assumptions, there exist a constant $C_{\tau}$, depending only on $\tau$, and a constant $C'_{\rm data} = C'_{\rm data}(\mu, \zeta, f, v^0)$, depending only on the data $\mu$, $\zeta$, $f$ and $v^0$,  such that
\begin{eqnarray*} \label{point-fixe2}
\| \widetilde \ell_{k+1} \|_{W^{1,2}(0, \tau'; L^2(\Gamma_0))} \le C_{\tau}
 +  C'_{\rm data}\bigl(  \sqrt{\tau' - \tau } + (\tau' - \tau)^{\frac{p-2}{2p}} \bigr)
 \| \widetilde \ell_{k} \|_{W^{1,2}(0, \tau'; L^2(\Gamma_0))} 
\end{eqnarray*}
for all $k \ge 0$.
\end{proposition}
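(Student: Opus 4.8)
The plan is to decompose $\widetilde\ell_{k+1}$ on $(0,\tau')$ into a part that does not depend on $k$ and a part supported on $(\tau,\tau')$ which becomes small with $\tau'-\tau$. By the induction already carried out, $\sigma_k=\bar\sigma$ on $\Omega\times(0,\tau)$ for every $k\ge 0$; writing $\bar g(x',s)=\bigl|{\cal R}(\bar\sigma^3(\cdot,s))(x')\bigr|$ and $g_k(x',s)=\bigl|{\cal R}(\sigma_k^3(\cdot,s))(x')\bigr|$, we thus have $g_k=\bar g$ on $\Gamma_0\times(0,\tau)$, whence, for a.e. $x'\in\Gamma_0$ and all $t\in(0,\tau')$,
\[
\widetilde\ell_{k+1}(x',t)=\widetilde\ell^{\,\mathrm{fix}}(x',t)+\widetilde\ell^{\,\mathrm{var}}_{k+1}(x',t),
\]
where $\widetilde\ell^{\,\mathrm{fix}}(x',t)={\cal F}^{\sigma}(x',t)\int_0^{\min(t,\tau)}S(t-s)\bar g(x',s)\,ds$ is independent of $k$, $\widetilde\ell^{\,\mathrm{var}}_{k+1}(x',t)=0$ for $t\le\tau$, and $\widetilde\ell^{\,\mathrm{var}}_{k+1}(x',t)={\cal F}^{\sigma}(x',t)\int_\tau^t S(t-s)g_k(x',s)\,ds$ for $t\in(\tau,\tau')$. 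First I would bound $\|\widetilde\ell^{\,\mathrm{fix}}\|_{W^{1,2}(0,\tau';L^2(\Gamma_0))}$ by a constant $C_\tau$ that does not depend on $k$ or on $\tau'$: this uses ${\cal F}^{\sigma}\in W^{1,p}(0,T;L^2(\Gamma_0))\hookrightarrow C^0([0,T];L^2(\Gamma_0))$, $S\in C^1(\br^+;\br^+)$, the bound on $\|\bar\sigma^3\|_{L^\infty(0,\tau;E(\Omega))}$ furnished by Theorem~\ref{tresca-regularity} and Proposition~\ref{proposition1}, and the continuity of ${\cal R}:E(\Omega)\to C^0(\Gamma_0)$, which gives $\|\bar g(\cdot,s)\|_{L^\infty(\Gamma_0)}\le c\,\|\bar\sigma^3(\cdot,s)\|_{E(\Omega)}$ for a.e. $s$.

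The heart of the matter is the estimate of $\|\widetilde\ell^{\,\mathrm{var}}_{k+1}\|_{W^{1,2}(0,\tau';L^2(\Gamma_0))}=\|\widetilde\ell^{\,\mathrm{var}}_{k+1}\|_{W^{1,2}(\tau,\tau';L^2(\Gamma_0))}$. Set $J_k(x',t)=\int_\tau^t S(t-s)g_k(x',s)\,ds$, so that $\widetilde\ell^{\,\mathrm{var}}_{k+1}={\cal F}^{\sigma}J_k$ on $(\tau,\tau')$. Since $\sigma_k^3$ need not admit a trace on $\Gamma_0$, the crucial observation is that the $C^0(\Gamma_0)$-continuity of ${\cal R}$ yields the pointwise-in-$\Gamma_0$ bound $\|g_k(\cdot,s)\|_{L^\infty(\Gamma_0)}\le c\,\|\sigma_k^3(\cdot,s)\|_{E(\Omega)}\le c\,\|\sigma_k^3\|_{L^\infty(0,\tau';E(\Omega))}$ for a.e. $s\in(0,\tau')$. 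Because $S\in C^1$, it follows that $J_k$ and $\partial_tJ_k=S(0)g_k(\cdot,t)+\int_\tau^t S'(t-s)g_k(\cdot,s)\,ds$ are both bounded in $L^\infty\bigl((\tau,\tau')\times\Gamma_0\bigr)$ by $c'\,\|\sigma_k^3\|_{L^\infty(0,\tau';E(\Omega))}$, with $c'$ depending on $\|S\|_{C^1([0,T])}$ and $T$. Applying the Leibniz rule $\partial_t\widetilde\ell^{\,\mathrm{var}}_{k+1}=(\partial_t{\cal F}^{\sigma})J_k+{\cal F}^{\sigma}\,\partial_tJ_k$ and estimating in $L^2(\tau,\tau';L^2(\Gamma_0))$, the terms ${\cal F}^{\sigma}J_k$ and ${\cal F}^{\sigma}\partial_tJ_k$ are controlled by $c''\,\|{\cal F}^{\sigma}\|_{L^2(\tau,\tau';L^2(\Gamma_0))}\,\|\sigma_k^3\|_{L^\infty(0,\tau';E(\Omega))}\le c''\sqrt{\tau'-\tau}\,\|{\cal F}^{\sigma}\|_{C^0([0,T];L^2(\Gamma_0))}\,\|\sigma_k^3\|_{L^\infty(0,\tau';E(\Omega))}$, which accounts for the factor $\sqrt{\tau'-\tau}$; the term $(\partial_t{\cal F}^{\sigma})J_k$ is controlled by $c''\,\|\partial_t{\cal F}^{\sigma}\|_{L^2(\tau,\tau';L^2(\Gamma_0))}\,\|\sigma_k^3\|_{L^\infty(0,\tau';E(\Omega))}$, and since $p>2$ Hölder's inequality in time gives $\|\partial_t{\cal F}^{\sigma}\|_{L^2(\tau,\tau';L^2(\Gamma_0))}\le(\tau'-\tau)^{\frac{p-2}{2p}}\|\partial_t{\cal F}^{\sigma}\|_{L^p(0,T;L^2(\Gamma_0))}$, which accounts for the factor $(\tau'-\tau)^{\frac{p-2}{2p}}$. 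Hence $\|\widetilde\ell^{\,\mathrm{var}}_{k+1}\|_{W^{1,2}(\tau,\tau';L^2(\Gamma_0))}\le C''_{\rm data}\bigl(\sqrt{\tau'-\tau}+(\tau'-\tau)^{\frac{p-2}{2p}}\bigr)\,\|\sigma_k^3\|_{L^\infty(0,\tau';E(\Omega))}$ with $C''_{\rm data}$ depending only on the data.

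Finally I would insert the a priori bound $\|\sigma_k^3\|_{L^\infty(0,\tau';E(\Omega))}\le\sqrt2\,C_{\rm data}\bigl(1+\|\ell_k\|_{W^{1,2}(0,\tau';L^2(\Gamma_0))}\bigr)$ recalled just above, use $\|\ell_k\|_{W^{1,2}(0,\tau';L^2(\Gamma_0))}\le\|{\cal F}^0\|_{W^{1,2}(0,T;L^2(\Gamma_0))}+\|\widetilde\ell_k\|_{W^{1,2}(0,\tau';L^2(\Gamma_0))}$, and add back $\widetilde\ell^{\,\mathrm{fix}}$ through $\widetilde\ell_{k+1}=\widetilde\ell^{\,\mathrm{fix}}+\widetilde\ell^{\,\mathrm{var}}_{k+1}$. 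Collecting all the terms that do not carry the factor $\|\widetilde\ell_k\|_{W^{1,2}}$ and bounding $\tau'-\tau\le T$ in them produces a constant which is absorbed into $C_\tau$, while $C'_{\rm data}=\sqrt2\,C_{\rm data}\,C''_{\rm data}$ multiplies the remaining term; this is exactly the announced inequality. I expect the estimate of $\widetilde\ell^{\,\mathrm{var}}_{k+1}$ to be the only delicate step: one has to pass through the regularization operator ${\cal R}$ with values in $C^0(\Gamma_0)$ rather than through a nonexistent trace of $\sigma_k^3$, and the assumption $p>2$ must be used in an essential way, since at $p=2$ the term involving $\partial_t{\cal F}^{\sigma}$ would contribute a fixed constant instead of a vanishing power of $\tau'-\tau$ and the scheme would fail to be contractive.
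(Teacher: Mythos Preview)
Your proposal is correct and follows essentially the same approach as the paper: both split at $t=\tau$ using $\sigma_k=\bar\sigma$ on $(0,\tau)$, bound $|{\cal R}(\sigma_k^3)|$ in $L^\infty(\Gamma_0)$ via the continuity of ${\cal R}:E(\Omega)\to C^0(\Gamma_0)$, use H\"older in time with the exponent $p>2$ for the $\partial_t{\cal F}^\sigma$ term, and close with the a priori bound $\|\sigma_k^3\|_{L^\infty(0,\tau';E(\Omega))}\le\sqrt2\,C_{\rm data}(1+\|\ell_k\|_{W^{1,2}})$. The only cosmetic difference is that the paper splits the time interval in the norm (using $\widetilde\ell_{k+1}=\widetilde\ell_0$ on $(0,\tau)$ and bounding the full $\int_0^t$ on $(\tau,\tau')$), whereas you split the function pointwise into $\widetilde\ell^{\,\mathrm{fix}}+\widetilde\ell^{\,\mathrm{var}}_{k+1}$; both lead to the same inequality with the same constants up to harmless factors.
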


\begin{proof}
For all $k \ge 0$ we have
\begin{eqnarray*}
\widetilde \ell_{k+1} (x',t) =  {\cal F}^{\sigma} (x',t)   \int_{0}^{t} S(t-s) \bigl| {\cal R}( \sigma_{k}^3(\cdot, s)) (x') \bigr| \, ds 
\end{eqnarray*}
for almost every $x' \in \Gamma_0$ and for all  $t \in [0,T]$ and 
\begin{eqnarray*}
\begin{array}{ll}
\displaystyle \frac{ \partial \widetilde \ell_{k+1}}{\partial t} (x',t) = \frac{\partial {\cal F}^{\sigma}}{\partial t} (x',t)   \int_{0}^{t} S(t-s) \bigl|  {\cal R}( \sigma_{k}^3(\cdot, s)) (x')\bigr| \, ds \\
\displaystyle + {\cal F}^{\sigma} (x',t)   S(0) \bigl| {\cal R}( \sigma_{k}^3(\cdot, t)) (x') \bigr| 
+ {\cal F}^{\sigma} (x',t)   \int_{0}^{t} S' (t-s) \bigl| {\cal R}( \sigma_{k}^3(\cdot, s)) (x') \bigr| \, ds  
\end{array}
\end{eqnarray*}
for almost every $x' \in \Gamma_0$ and for almost every  $t \in [0,T]$.

But ${\cal F}^{\sigma} \in W^{1, p} \bigl( 0,T; L^{2} (\Gamma_0; \br^+) \bigr)$, $S \in C^1 (\br^+ ; \br^+)$ and 
${\cal R} \in {\cal L}_c \bigl( E( \Omega) ; C^0(\Gamma_0 ) \bigr)$. So there exists three positive real numbers $ C_{{\cal F}^{\sigma}}$, $C_{S}$ and $C_{\cal R}$ such that
\begin{eqnarray*}
\bigl\| {\cal F}^{\sigma}  \bigr\|_{L^{\infty} (0,T; L^{2}(\Gamma_0))} \le C_{{\cal F}^{\sigma}}, \quad 
\left\| \frac{\partial {\cal F}^{\sigma}}{\partial t}  \right\|_{L^{p} (0,T; L^{2}(\Gamma_0))} \le C_{{\cal F}^{\sigma}},
\end{eqnarray*}
\begin{eqnarray*}
\bigl| S(t') \bigr| \le C_S, \quad \bigl| S' (t') \bigr| \le C_S \quad \forall t' \in [0,T],
\end{eqnarray*}
and 
\begin{eqnarray*}
\bigl\| {\cal R} (\widetilde \sigma) \bigr\|_{L^{\infty} (\Gamma_0)} \le C_{{\cal R}} \| \widetilde \sigma \|_{E ( \Omega)} \quad \forall \widetilde \sigma  \in E ( \Omega) .
\end{eqnarray*}
It follows that 
\begin{eqnarray*}
\begin{array}{ll}
\displaystyle \bigl\| \widetilde \ell_{k+1} \bigr\|^2_{L^2(0, \tau'; L^2(\Gamma_0) ) }  = \int_0^{\tau'} \left\|  {\cal F}^{\sigma} (x',t)   \int_{0}^{t} S(t-s) \bigl| {\cal R}( \sigma_{k}^3(\cdot, s)) (x') \bigr| \, ds \right\|^2_{L^2(\Gamma_0)} \, dt \\
\displaystyle =  \| \widetilde \ell_0 \|_{L^2(0, \tau; L^2(\Gamma_0))}^2 + \int_{\tau}^{\tau'} \left\|  {\cal F}^{\sigma} (x',t)   \int_{0}^{t} S(t-s) \bigl|  {\cal R}(\sigma_{k}^3(\cdot, s)) (x') \bigr| \, ds \right\|^2_{L^2(\Gamma_0)} \, dt \\
\displaystyle \le \| \widetilde \ell_0 \|_{L^2(0, \tau; L^2(\Gamma_0))}^2 + C_{{\cal F}^{\sigma}}^2 C_S^2 \int_{\tau}^{\tau'} t \left( \int_0^t \bigl\| {\cal R}(\sigma_{k}^3(\cdot, s)) (x') \bigr\|^2_{L^{\infty}(\Gamma_0)} \, ds \right) \, dt \\
\displaystyle \le \| \widetilde \ell_0 \|_{L^2(0, \tau; L^2(\Gamma_0))}^2 + C_{{\cal F}^{\sigma}}^2 C_S^2  (\tau' - \tau) T^2 C_{{\cal R}}^2 \| \sigma_k^3 \|^2_{L^{\infty} (0, \tau'; E ( \Omega))}.
\end{array}
\end{eqnarray*}
Similarly
\begin{eqnarray*}
\begin{array}{ll}
\displaystyle \left\| \frac{\partial \widetilde \ell_{k+1}}{\partial t}  \right\|^2_{L^2(0, \tau'; L^2(\Gamma_0) ) } 
= \left\| \frac{\partial  \widetilde \ell_0}{\partial t}  \right\|_{L^2(0, \tau; L^2(\Gamma_0))}^2 +  \left\| \frac{\partial  \widetilde \ell_{k+1}}{\partial t}  \right\|_{L^2(\tau, \tau'; L^2(\Gamma_0))}^2 \\
\displaystyle \le  \left\| \frac{\partial  \widetilde \ell_0}{\partial t}  \right\|_{L^2(0, \tau; L^2(\Gamma_0))}^2 
+ 3 \int_{\tau}^{\tau'} \left\| \frac{\partial {\cal F}^{\sigma}}{\partial t} (x',t)   \int_{0}^{t} S(t-s) \bigl|  {\cal R}(\sigma_{k}^3(\cdot, s)) (x') \bigr| \, ds \right\|^2_{L^2(\Gamma_0)} \, dt  \\
\displaystyle  + 3 \int_{\tau}^{\tau'}  \left\|  {\cal F}^{\sigma} (x',t)   S(0) \bigl| {\cal R}( \sigma_{k}^3(\cdot, t)) (x') \bigr| \right\|^2_{L^2(\Gamma_0)} \, dt \\
\displaystyle + 3 \int_{\tau}^{\tau'} \left\|  {\cal F}^{\sigma} (x',t)   \int_{0}^{t} S' (t-s) \bigl|  {\cal R}(\sigma_{k}^3(\cdot, s)) (x') \bigr| \, ds \right\|^2_{L^2(\Gamma_0)} \, dt \\
\displaystyle 
 \end{array}
 \end{eqnarray*}
 and thus 
\begin{eqnarray*}
\begin{array}{ll}
\displaystyle \left\| \frac{\partial \widetilde \ell_{k+1}}{\partial t}  \right\|^2_{L^2(0, \tau'; L^2(\Gamma_0) ) } \\
\displaystyle 
\le  \left\| \frac{\partial  \widetilde \ell_0}{\partial t}  \right\|_{L^2(0, \tau; L^2(\Gamma_0))}^2 
+ 3 C_S^2  T^2 C_{{\cal R}}^2 \| \sigma_k^3 \|^2_{L^{\infty} (0, \tau'; E ( \Omega))} \int_{\tau}^{\tau'}  \left\| \frac{\partial {\cal F}^{\sigma}}{\partial t} (x',t) \right\|^2_{L^2(\Gamma_0)} \, dt \\
\displaystyle
+ 
3 C_{{\cal F}^{\sigma}}^2 C_S^2  (\tau' - \tau) (  T^2 +1) C_{{\cal R}}^2 \| \sigma_k^3 \|^2_{L^{\infty} (0, \tau'; E ( \Omega))} \\
\displaystyle
\le  \left\| \frac{\partial  \widetilde \ell_0}{\partial t}  \right\|_{L^2(0, \tau; L^2(\Gamma_0))}^2 
+ 
3 C_{{\cal F}^{\sigma}}^2 C_S^2 (T^2 +1)   C_{{\cal R}}^2 \bigl(  (\tau' - \tau) + (\tau' - \tau)^{\frac{p-2}{p}} \bigr) \| \sigma_k^3 \|^2_{L^{\infty} (0, \tau'; E ( \Omega))} .
%
 \end{array}
 \end{eqnarray*}
 By combining these estimates we get 
\begin{eqnarray*}
\begin{array}{ll}
\displaystyle \bigl\| \widetilde \ell_{k+1} \bigr\|^2_{W^{1,2}(0, \tau'; L^2(\Gamma_0) ) }  
 \le \| \widetilde \ell_0 \|_{W^{1,2}(0, \tau; L^2(\Gamma_0))}^2  \\
 \displaystyle + C_{{\cal F}^{\sigma}}^2 C_S^2     C_{{\cal R}}^2 
2 (4 T^2 + 3)  C_{\rm data}^2 \bigl(  (\tau' - \tau) + (\tau' - \tau)^{\frac{p-2}{p}} \bigr) \\
\displaystyle
\times \bigl( 1 + \| {\cal F}^0 \|_{W^{1,2}(0,\tau'; L^2(\Gamma_0))} +  \| \widetilde \ell_k \|_{W^{1,2} (0, \tau'; L^2 (\Gamma_0))} \bigr)^2 
%
 \end{array}
 \end{eqnarray*}
 and we may conclude  with
\begin{eqnarray*}
\begin{array}{ll}
\displaystyle C_{\tau} =  \| \widetilde \ell_0 \|_{W^{1,2}(0, \tau; L^2(\Gamma_0))}  \\
\displaystyle + C_{{\cal F}^{\sigma}} C_S C_{{\cal R}}  \sqrt{ 2  (4 T^2 + 3)}  C_{\rm data} \bigl( \sqrt{T} + T^{\frac{p-2}{2p}} \bigr)  
  \bigl( 1 +\| {\cal F}^0 \|_{W^{1,2}(0,T; L^2(\Gamma_0))} 
  \bigr) 
\end{array}
 \end{eqnarray*}
 and
 \begin{eqnarray*}
\displaystyle C'_{\rm data}  =  C_{{\cal F}^{\sigma}} C_S  C_{{\cal R}}  \sqrt{2 (4 T^2 + 3) }
 C_{\rm data} .
 \end{eqnarray*}

\end{proof}

\bigskip

Let us fix now $\tau' > \tau$ such that 
\begin{eqnarray*} \label{tau}
 \sqrt{\tau' - \tau} + (\tau' - \tau)^{\frac{p-2}{2p}} 
\le  \frac{1}{ 2 C'_{\rm data}}.
\end{eqnarray*}
For instance we may choose
\begin{eqnarray*}
\begin{array}{ll}
\displaystyle \tau' = \tau + \frac{1}{ ( 4 C'_{\rm data})^2} \quad \hbox{ if $\displaystyle \frac{1}{ 4 C'_{\rm data}} \ge 1$,} \\
\displaystyle \tau' = \tau + \frac{1}{ ( 4 C'_{\rm data})^{\frac{2p}{p-2}}} \quad \hbox{ otherwise.}
\end{array}
\end{eqnarray*}
By observing that 
\begin{eqnarray*}
\displaystyle \| \widetilde \ell_0 \|_{W^{1,2}(0, \tau'; L^2(\Gamma_0))}  
\le C'_{\tau} \eqldef C_{\tau} +   C_{{\cal F}^{\sigma}} C_S   C_{{\cal R}} \tau \sqrt{T- \tau} \| \bar \sigma^3 \|_{L^{\infty}( 0,\tau; E(\Omega) )}
\end{eqnarray*} 
we obtain with an immediate induction that 
\begin{eqnarray*}
\bigl\| \widetilde \ell_{k} \bigr\|_{W^{1,2}(0, \tau'; L^2(\Gamma_0) ) } \le C'_{\tau} \sum_{m=0}^k \frac{1}{2^m} \le 2 C'_{\tau} \quad \forall k \ge 0.
\end{eqnarray*}
It follows that $\widetilde v_k $, $\displaystyle \frac{\partial^2 \widetilde v_k}{\partial t^2}$, $p_k$, $\sigma_k$ and $div(\sigma_k)$ are uniformly bounded   in $W^{1,\infty} \bigl(0, \tau'; {\bf L}^2(\Omega) \bigr) \cap W^{1,2} (0, \tau'; {\cal  V}_{0 div}) $, $L^2 \bigl(0, \tau', \bigl({\bf H}^1_{0 div} (\Omega) \bigr)' \bigr)$, $L^{\infty} \bigl(0, \tau'; L^2_0(\Omega) \bigr)$, $L^{\infty} \bigl(0, \tau';  (L^2(\Omega)^{3\times 3} \bigr)$ and in $L^{\infty} \bigl(0, \tau'; {\bf L}^2(\Omega) \bigr)$ respectively.

Hence, possibly modifying $\widetilde v_k $ and  $\displaystyle \frac{\partial \widetilde v_k}{\partial t}$ on a negligible subset of $[0, \tau']$,  we have $\widetilde v_k \in C^0 \bigl( [0, \tau']; {\cal V}_{0 div} \bigr)$ and $\displaystyle \frac{\partial \widetilde v_k}{\partial t} \in C^0 \bigl( [0, \tau']; \bigl({\bf H}^1_{0 div} (\Omega) \bigr)' \bigr)$. Moreover
\begin{eqnarray*}
\displaystyle
\widetilde{v}_{k } , \frac{\partial \widetilde{v}_{k }}{\partial t} \rightharpoonup \widetilde{v_*}, \frac{\partial \widetilde{v_*}}{\partial t}  \quad && \mbox{weakly star in $L^{\infty}\bigl(0, \tau' ;{\bf L}^{2}(\Omega) \bigr)$} \\
&& \mbox{and weakly in $ L^{2}(0,\tau' ;{\cal V}_{0 div})$,}
\end{eqnarray*}
\begin{eqnarray*}
\displaystyle{
\frac{\partial^2 \widetilde{v}_{k }}{\partial t^2} \rightharpoonup \frac{\partial^2 \widetilde{v_*}}{\partial t^2} \quad\mbox{weakly in $ L^{2} \bigl(0,\tau';\bigl({\bf H}^1_{0 div} (\Omega) \bigr)' \bigr) $, }}
\end{eqnarray*}
\begin{eqnarray*}
\displaystyle{
p_{k } \rightharpoonup p_*  \quad\mbox{weakly star in $L^{\infty}\bigl(0,\tau';{ L}^{2}_0(\Omega) \bigr)$,}}
\end{eqnarray*}
and 
\begin{eqnarray*}
\displaystyle{
\widetilde{\ell}_{k } \rightharpoonup \widetilde{\ell}_*  \quad\mbox{weakly  in $L^{2}\bigl(0,\tau';{ L}^{2}(\Gamma_0) \bigr)$.}}
\end{eqnarray*}
By using Aubin's and Simon's lemmas, and possibly extracting another subsequence, we have also 
\begin{eqnarray*} 
\widetilde{v}_{k} \rightarrow \widetilde{v}_* \quad\mbox{strongly in $ L^{2} \bigl(0,\tau';{\bf L}^{2}(\Gamma_0) \bigr)$,}
\end{eqnarray*}
\begin{eqnarray} \label{point-fixe5}
 \frac{\partial \widetilde{v}_{k}}{\partial t} \rightarrow \frac{\partial \widetilde{v}_*}{\partial t} \quad\mbox{strongly in $ L^{2} \bigl(0, \tau';{\bf L}^{2}(\Omega) \bigr)$,}
\end{eqnarray}
and
\begin{eqnarray} \label{point-fixe6}
\widetilde{v}_{k } \rightarrow \widetilde{v}_*  \quad\mbox{strongly in ${\cal C}^{0}\bigl( [0,\tau']; {\bf L}^2(\Omega) \bigr)$.}
\end{eqnarray}
Furthermore,  possibly modifying $\widetilde v_* $ and  $\displaystyle \frac{\partial \widetilde v_*}{\partial t}$ on a negligible subset of $[0, \tau']$,  we have $\widetilde v_* \in C^0 \bigl( [0, \tau']; {\cal V}_{0 div} \bigr)$ and $\displaystyle \frac{\partial \widetilde v_*}{\partial t} \in C^0 \bigl( [0, \tau']; \bigl({\bf H}^1_{0 div} (\Omega) \bigr)' \bigr)$.
By passing to the limit as $k$ tends to $+ \infty$ in (\ref{NS-25-k})-(\ref{NS-25init-k}) we get 
\begin{eqnarray*} 
\begin{array}{ll}
\displaystyle
\left\langle \frac{d}{dt} \left( \widetilde{v}_*, \varphi \right) , \chi\right\rangle_{{\cal D}'(0,\tau'), {\cal D}(0,\tau')}  
 - \bigl\langle \bigl(p_*,div(\varphi)  \bigr), \chi \bigr\rangle_{{\cal D}'(0,\tau'), {\cal D}(0,\tau')} \\
\displaystyle + \int_0^{\tau'} a(\widetilde{v}_*,\varphi \chi ) \, dt 
+  \int_0^{\tau'} \int_{\Gamma_0} \ell_* \bigl( | \widetilde{v}_*+ \varphi \chi \bigr| -  | \widetilde{v}_* | \bigr) \, dx' dt \\
\displaystyle  \geq
\bigl\langle (f,\varphi ), \chi \bigr\rangle_{{\cal D}'(0,\tau'), {\cal D}(0,\tau')} -  \int_0^{\tau'} a(v^{0}\zeta ,\varphi \chi) \, dt  
\displaystyle - \left\langle \left(v^{0}\frac{\partial \zeta}{\partial t},\varphi \right),
\chi \right\rangle_{{\cal D}'(0,\tau' ), {\cal D}(0,\tau')} 
\end{array}
\end{eqnarray*}
for all $\varphi \in {\cal V}_0$ and for all $\chi \in {\cal D}(0, \tau')$, with
\begin{eqnarray*} 
\widetilde{v}_*(0, \cdot) = \widetilde{v}^{0} =0.
\end{eqnarray*}
It follows that $(\widetilde v_*, p_*)$ is the unique solution on $[0, \tau']$ of problem $(P)$ with ${\cal F} = \ell_*= {\cal F}^0 + \widetilde{\ell}_* $ and $\sigma_* = - p_* {\rm Id} + 2 \mu D(\widetilde v_* + v^0 \zeta)$ satisfies $\sigma_* \in L^{\infty} \bigl( 0,\tau'; \bigl( L^2(\Omega) \bigr)^{3 \times 3} \bigr)$. Moreover, with a straighforward adaptation of Proposition \ref{proposition1}, we have also $div(\sigma_*) \in L^{\infty} \bigl( 0, \tau'; {\bf L}^{2} (\Omega) \bigr)$.
%

\bigskip

Let us prove now that 

\begin{lemma} \label{ell*}
Under the previous assumptions
\begin{eqnarray*}
\ell_{*} (x',t)  =  {\cal F}(x', t, \sigma_{*}) ={\cal F}^0(x',t) +  {\cal F}^{\sigma} (x',t)   \int_{0}^{t} S(t-s) \bigl| {\cal R}(\sigma_{*}^3(\cdot, s)) (x')\bigr| \, ds 
\end{eqnarray*}
for almost every $x' \in \Gamma_0$ and for all $t \in [0,\tau']$.
\end{lemma}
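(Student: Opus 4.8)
The plan is to show that the (sub)sequence $(\ell_{k+1})$ extracted above converges \emph{strongly} in $L^2(0,\tau';L^2(\Gamma_0))$ towards ${\cal F}^0 + {\cal F}^{\sigma}\int_0^\cdot S(\cdot-s)\,|{\cal R}(\sigma_*^3(\cdot,s))|\,ds$. Since we already know that $\widetilde\ell_k = \ell_k - {\cal F}^0 \rightharpoonup \widetilde\ell_*$ weakly in $L^2(0,\tau';L^2(\Gamma_0))$, uniqueness of the limit of $(\ell_{k+1})$ then gives $\ell_* = {\cal F}(\cdot,\cdot,\sigma_*)$ for almost every $x'\in\Gamma_0$; and since both sides lie in $C^0([0,\tau'];L^2(\Gamma_0))$ — the left-hand one because $\ell_*\in W^{1,2}(0,\tau';L^2(\Gamma_0))$, the right-hand one because ${\cal F}^{\sigma}\in C^0([0,\tau'];L^2(\Gamma_0))$ and the truncated convolution of $|{\cal R}(\sigma_*^3)|$ belongs to $C^0([0,\tau'];C^0(\Gamma_0))$ — the equality then holds for every $t\in[0,\tau']$. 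The reason this is not immediate is that the absolute value inside ${\cal R}$ forbids passing to the limit with only weak convergence of $\widetilde\ell_k$ (or of $\sigma_k$); the needed strong convergence of $\sigma_k^3$ must be produced, and it cannot be obtained circularly from the convergence of the thresholds.

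\medskip

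The first and main step is therefore to upgrade the weak convergence $\widetilde v_k\rightharpoonup\widetilde v_*$ in $L^2(0,\tau';{\cal V}_{0div})$ to strong convergence in $L^2(0,\tau';{\bf H}^1(\Omega))$, using a monotonicity argument on the variational inequality rather than compactness. By the discussion preceding the lemma, $(\widetilde v_*,p_*)$ solves problem $(P)$ with threshold $\ell_*$, so both $\widetilde v_k$ and $\widetilde v_*$ satisfy a variational inequality of type (\ref{NS-25-k}) over ${\cal V}_{0div}$. Testing the one for $\widetilde v_k$ with $\psi = \widetilde v_* - \widetilde v_k$ (admissible by density of ${\cal D}(0,\tau')\otimes{\cal V}_{0div}$ in $L^2(0,\tau';{\cal V}_{0div})$, as in the uniqueness part of Theorem \ref{tresca-existence}) and isolating $\int_0^{\tau'} a(\widetilde v_k,\widetilde v_k-\widetilde v_*)\,dt$, one bounds this quantity above by $\int_0^{\tau'}\big(\tfrac{\partial\widetilde v_k}{\partial t},\widetilde v_*-\widetilde v_k\big)dt + \int_0^{\tau'} a(v^0\zeta,\widetilde v_*-\widetilde v_k)\,dt + \int_0^{\tau'}\!\!\int_{\Gamma_0}\ell_k(|\widetilde v_*|-|\widetilde v_k|)\,dx'dt - \int_0^{\tau'}\big(f-v^0\tfrac{\partial\zeta}{\partial t},\widetilde v_*-\widetilde v_k\big)dt$. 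Each of these four terms tends to $0$: the first because $\partial_t\widetilde v_k\to\partial_t\widetilde v_*$ strongly in $L^2(0,\tau';{\bf L}^2(\Omega))$ by (\ref{point-fixe5}) and $\widetilde v_k\to\widetilde v_*$ strongly in $C^0([0,\tau'];{\bf L}^2(\Omega))$ by (\ref{point-fixe6}); the second and fourth by the weak convergence of $\widetilde v_k$; the third because $\widetilde v_k\to\widetilde v_*$ strongly in $L^2(0,\tau';{\bf L}^2(\Gamma_0))$ while $(\ell_k)$ is bounded in $L^2(0,\tau';L^2(\Gamma_0))$. Hence $\limsup_k\int_0^{\tau'}a(\widetilde v_k,\widetilde v_k)\,dt\le\int_0^{\tau'}a(\widetilde v_*,\widetilde v_*)\,dt$, and combined with the weak lower semicontinuity of $u\mapsto(\int_0^{\tau'}a(u,u)\,dt)^{1/2}$ and Korn's inequality (\ref{coercif}) this gives $\widetilde v_k\to\widetilde v_*$ strongly in $L^2(0,\tau';{\bf H}^1(\Omega))$.

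\medskip

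With this in hand, a straightforward adaptation of Lemma \ref{lemma3}, written pointwise in $t$ since $p_k\in L^\infty(0,\tau';L^2_0(\Omega))$ and using the operator $P$ of \cite{girault}, shows that $p_k\to p_*$ strongly in $L^2(0,\tau';L^2(\Omega))$, hence $\sigma_k\to\sigma_*$ strongly in $L^2\big(0,\tau';(L^2(\Omega))^{3\times3}\big)$; together with $div(\sigma_k) = \tfrac{\partial\widetilde v_k}{\partial t} + v^0\tfrac{\partial\zeta}{\partial t} - f \to div(\sigma_*)$ strongly in $L^2(0,\tau';{\bf L}^2(\Omega))$ this yields $\sigma_k^3\to\sigma_*^3$ strongly in $L^2(0,\tau';E(\Omega))$. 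Since ${\cal R}\in{\cal L}_c\big(E(\Omega);C^0(\Gamma_0)\big)$ and $z\mapsto|z|$ is $1$-Lipschitz, $|{\cal R}(\sigma_k^3)|\to|{\cal R}(\sigma_*^3)|$ strongly in $L^2(0,\tau';C^0(\Gamma_0))$; the truncated convolution $g\mapsto\int_0^\cdot S(\cdot-s)g(s)\,ds$ is continuous from $L^2(0,\tau';C^0(\Gamma_0))$ into $C^0([0,\tau'];C^0(\Gamma_0))$ (Cauchy--Schwarz and the boundedness of $S$), and multiplication by ${\cal F}^{\sigma}\in C^0([0,\tau'];L^2(\Gamma_0))$ is continuous into $L^2(0,\tau';L^2(\Gamma_0))$; composing these maps gives $\ell_{k+1}\to{\cal F}^0 + {\cal F}^{\sigma}\int_0^\cdot S(\cdot-s)|{\cal R}(\sigma_*^3(\cdot,s))|\,ds$ strongly in $L^2(0,\tau';L^2(\Gamma_0))$. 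Comparing with the weak limit $\widetilde\ell_*$ concludes the proof.

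\medskip

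I expect the second step — the passage from weak to strong convergence of $\widetilde v_k$ — to be the only real obstacle: it rests on the variational inequality (and on the fact, already established, that $\widetilde v_*$ is the Tresca solution for the \emph{limit} threshold $\ell_*$), not on any compactness in $L^2(0,\tau';{\bf H}^1(\Omega))$, and this is precisely why the strong convergence of $\sigma_k^3$ cannot be deduced from $\widetilde\ell_k\rightharpoonup\widetilde\ell_*$ alone. Everything after it is linear-continuity bookkeeping, the only mild care being the pointwise-in-$t$ version of the pressure estimate of Lemma \ref{lemma3}.
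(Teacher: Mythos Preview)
Your proof is correct and takes a genuinely different route from the paper's.

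The paper never upgrades the convergence of $\widetilde v_k$ beyond weak $L^2(0,\tau';{\cal V}_{0div})$. Instead it works pointwise in $t$: from $\partial_t\widetilde v_k\to\partial_t\widetilde v_*$ strongly in $L^2(0,\tau';{\bf L}^2(\Omega))$ it gets $div(\sigma_k)\to div(\sigma_*)$ strongly in $L^2$, extracts a.e.\ convergence in $t$, then invokes Helly's selection theorem to obtain $\widetilde v_k(t)\rightharpoonup\widetilde v_*(t)$ weakly in ${\cal V}_{0div}$ for every $t$, and from the pointwise equation for $p_k(t)$ deduces $p_k(t)\rightharpoonup p_*(t)$ weakly in $L^2(\Omega)$ for a.e.\ $t$. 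This yields $\sigma_k(t)\rightharpoonup\sigma_*(t)$ only \emph{weakly} in $E(\Omega)$ for a.e.\ $t$; to pass the absolute value the paper then exploits the explicit representation (\ref{calR}) of ${\cal R}(\widetilde\sigma)(x')$ as a duality pairing $\langle\gamma_n(\widetilde\sigma),f_{x'}\rangle$, which is weakly sequentially continuous on $E(\Omega)$, and concludes by dominated convergence.

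Your monotonicity step --- testing the inequality for $\widetilde v_k$ with $\widetilde v_*-\widetilde v_k$ and using the already available strong convergences (\ref{point-fixe5})--(\ref{point-fixe6}) together with the mere boundedness of $(\ell_k)$ --- bypasses all of this: once $\widetilde v_k\to\widetilde v_*$ strongly in $L^2(0,\tau';{\bf H}^1(\Omega))$, the pressure estimate gives $p_k\to p_*$ strongly in $L^2(0,\tau';L^2(\Omega))$, hence $\sigma_k^3\to\sigma_*^3$ strongly in $L^2(0,\tau';E(\Omega))$, and the rest is indeed linear-continuity bookkeeping. Your argument is shorter, avoids Helly's theorem, and does not rely on the particular structure of ${\cal R}$ beyond its norm-continuity; it would work for any ${\cal R}\in{\cal L}_c(E(\Omega);C^0(\Gamma_0))$, whereas the paper's argument needs ${\cal R}$ to be weak-to-pointwise continuous. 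Conversely, the paper's route would survive with only weak $H^1$-convergence of $\widetilde v_k$, which your approach does not.

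One caveat, shared with the paper: the last identification --- ``comparing with the weak limit $\widetilde\ell_*$'' --- equates the strong limit of $\ell_{k+1}$ with the weak limit of $\ell_k$ along the extracted subsequence; this step deserves one more line (e.g.\ extract the subsequence from the outset so that both $\widetilde\ell_{k}$ and $\widetilde\ell_{k+1}$ converge weakly, and observe that your strong convergence of $\sigma_k^3$ also forces strong convergence of $\sigma_{k+1}^3$ via the same energy argument applied to $(P_{k+1})$, so that the two limits coincide). The paper's ``which allows us to conclude'' glosses over exactly the same point.
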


\begin{proof}
Let  $\varphi \in \bigl({\cal D}(\Omega)\bigr)^3$ and $\chi \in {\cal D}(0, \tau')$. We have
\begin{eqnarray} \label{point-fixe3}
\begin{array}{ll}
\displaystyle \left\langle {d \over d t} \left( \widetilde{v}_*, \varphi \right) ,\pm \chi\right\rangle_{{\cal D}'(0,\tau'), {\cal D}(0,\tau')} 
+ \int_0^{\tau'} a(\widetilde{v}_*+ v^0 \zeta, \pm \varphi \chi) \, dx dt \\
\displaystyle 
- \bigl\langle \bigl(p_* , div(\varphi)  \bigr), \pm \chi \bigr\rangle_{{\cal D}'(0,\tau'), {\cal D}(0,\tau')} 
 \\
\displaystyle  \geq
\bigl\langle (f,\varphi ), \pm \chi \bigr\rangle_{{\cal D}'(0,\tau'), {\cal D}(0,\tau')} 
 - \left\langle \left(v^{0} \frac{\partial \zeta}{\partial t} ,\varphi \right),
 \pm \chi \right\rangle_{{\cal D}'(0,\tau'), {\cal D}(0,\tau')}  .
\end{array}
\end{eqnarray}
Similarly, for all $k \ge 0$ we have also
\begin{eqnarray} \label{point-fixe4}
\begin{array}{ll}
\displaystyle \left\langle {d \over d t} \left( \widetilde{v}_k, \varphi \right) ,\pm \chi\right\rangle_{{\cal D}'(0,\tau'), {\cal D}(0,\tau')} 
+ \int_0^{\tau'} a(\widetilde{v}_k+ v^0 \zeta,\pm \varphi \chi) \, dx dt  \\
\displaystyle 
- \bigl\langle \bigl(p_k , div(\varphi)  \bigr), \pm \chi \bigr\rangle_{{\cal D}'(0,\tau'), {\cal D}(0,\tau')}  \\
\displaystyle   \geq
\bigl\langle (f,\varphi ), \pm \chi \bigr\rangle_{{\cal D}'(0,\tau'), {\cal D}(0,\tau')} 
 - \left\langle \left(v^{0} \frac{\partial \zeta}{\partial t},\varphi \right),
 \pm \chi \right\rangle_{{\cal D}'(0,\tau'), {\cal D}(0,\tau')}  .
\end{array}
\end{eqnarray}
By substracting (\ref{point-fixe3}) to (\ref{point-fixe4}) we obtain
\begin{eqnarray*} 
\displaystyle \int_0^{\tau'} \int_{\Omega}  {\partial ( \widetilde{v}_k - \widetilde{v}_* ) \over\partial t} \cdot \varphi   \chi \, dx dt
+ \int_0^{\tau' } \int_{\Omega} \sum_{i, j =1}^3 ( \sigma_{ij k} - \sigma_{ij *} ) \frac{\partial \varphi_i}{\partial x_j} \chi \, dx dt
=0 \quad \forall k \ge 0.
\end{eqnarray*}
Thus
\begin{eqnarray*}
 \bigl\| div( \sigma_k - \sigma_* ) \bigr\|_{L^2(0, \tau'; {\bf L}^2(\Omega) )}  
\displaystyle = \left\|  {\partial (\widetilde{v_k} - \widetilde{v_*} ) \over\partial t} \right\|_{L^2(0, \tau'; {\bf L}^2(\Omega) )} \quad \forall k \ge 0.
\end{eqnarray*}
Since $\displaystyle \left( {\partial \widetilde{v}_k  \over\partial t} \right)_{k \ge 0}$ converges strongly in $L^2 \bigl( 0, \tau'; {\bf L}^2(\Omega) \bigr)$ to $\displaystyle {\partial \widetilde{v}_*  \over\partial t}$, we infer that $\bigl( div(\sigma_k) \bigr)_{k \ge 0}$ converges also strongly to $div(\sigma_*)$ in $L^2 \bigl( 0, \tau'; {\bf L}^2(\Omega) \bigr)$. Hence
\begin{eqnarray*}
\begin{array}{ll}
\displaystyle \left\| {\partial \widetilde{v}_k  \over\partial t} - {\partial \widetilde{v}_*  \over\partial t} \right\|_{{\bf L}^2 (\Omega) } \to 0 \quad \hbox{ strongly in $L^2(0, \tau')$,} \\
\displaystyle \bigl\| div(\sigma_k) - div (\sigma_*) \bigr\|_{{\bf L}^2 (\Omega) } \to 0 \quad \hbox{ strongly in $L^2(0, \tau')$,}
\end{array}
\end{eqnarray*}
and we infer that, possibly extracting a subsequence still denoted $({\widetilde v}_k, p_k)_{k \ge 0}$, there exists a negligible subset $A$ of $(0, \tau')$ such that 
\begin{eqnarray} \label{final1}
\begin{array}{ll}
\displaystyle  {\partial \widetilde{v}_k  \over\partial t} (t) , div(\sigma_k) (t) \to  {\partial \widetilde{v}_*  \over\partial t} (t) , div (\sigma_*) (t)  \\
\displaystyle  \quad \qquad  \hbox{ strongly in ${\bf L}^2(\Omega)$, for all $t \in (0 , \tau') \setminus A$.} 
\end{array}
\end{eqnarray}
On the other hand, $({\widetilde v}_k )_{k \ge 0}$ is bounded in $W^{1,2} (0, \tau'; {\cal V}_{0 div})$ and by using Helly's theorem (see \cite{mmm} for instance) we obtain that, possibly extracting another subsequence, still denoted $({\widetilde v}_k, p_k)_{k \ge 0}$, we have
\begin{eqnarray} \label{final2}
{\widetilde v}_k (t) \rightharpoonup \Lambda (t) \quad \hbox{ weakly in ${\cal V}_{0 div}$, for all $t \in [0, \tau']$}
\end{eqnarray}
with $\Lambda \in BV(0, \tau'; {\cal V}_{0 div})$. Then, for all $\varphi \in {\cal V}_{0 div}$ and for all $\chi \in {\cal D} (0, \tau')$ we have
\begin{eqnarray*}
  \bigl( {\widetilde v}_k (t) , \varphi \bigr)_{{\bf H}^1(\Omega)}   \chi(t)  \to \bigl(\Lambda (t), \varphi \bigr)_{{\bf H}^1(\Omega)} \chi (t) 
\quad \hbox{ for all $t \in [0, \tau']$}
\end{eqnarray*}
where $( \cdot, \cdot)_{{\bf H}^1(\Omega)}$ denotes the inner product of ${\bf H}^1(\Omega)$ and 
\begin{eqnarray*}
\bigl|   \bigl( {\widetilde v}_k (t) , \varphi \bigr)_{{\bf H}^1(\Omega)}   \chi(t)  \bigr| 
\le  \| \chi\|_{C^0([0, \tau'])} \| \varphi \|_{{\bf H}^1 (\Omega)} \|  {\widetilde v}_k \|_{L^{\infty} (0, \tau'; {\bf H}^1 (\Omega))}
\quad \hbox{ for all $t \in [0, \tau']$.}
\end{eqnarray*}
We may apply Lebesgue's dominated theorem and we get
\begin{eqnarray*}
\int_0^{\tau'}  \bigl( {\widetilde v}_k (t) , \varphi \bigr)_{{\bf H}^1(\Omega)}   \chi(t)  \, dt
\to \int_0^{\tau'}   \bigl(\Lambda (t), \varphi \bigr)_{{\bf H}^1(\Omega)} \chi (t) \, dt .
\end{eqnarray*}
Owing that $({\widetilde v}_k )_{k \ge 0}$ converges weakly to ${\widetilde v}_*$  in $L^{2} (0, \tau'; {\cal V}_{0 div})$, we infer that $\Lambda = {\widetilde v}_*$ in $L^{2} (0, \tau'; {\cal V}_{0 div})$, i.e.
\begin{eqnarray*}
\int_0^{\tau'} \bigl\| \Lambda (t) - {\widetilde v}_* (t) \bigr\|^2_{{\bf H}^1(\Omega)} \, dt = 0 . 
\end{eqnarray*}
It follows that there exists another negligible subset $A'$ of $(0, \tau')$ such that
\begin{eqnarray} \label{final2bis}
\Lambda (t) = {\widetilde v}_* (t) \quad  \hbox{ in ${\cal V}_{0 div}$, for all $t \in (0, \tau') \setminus A'$.}
\end{eqnarray}
Recalling that,   $\displaystyle \frac{\partial \widetilde v_k}{\partial t}$ belongs to $C^0 \bigl( [0,\tau'], \bigl({\bf H}^1_{0 div} (\Omega) \bigr)' \bigr) \cap L^{\infty} \bigl(0, \tau', {\bf L}^2 (\Omega) \bigr)$ for all $k \ge 0$, we infer that $\displaystyle \frac{\partial \widetilde v_k}{\partial t}$ is weakly continuous with values in ${\bf L}^2(\Omega)$ on $[0, \tau']$ and 
\begin{eqnarray*}
\left\| \frac{\partial \widetilde v_k}{\partial t} ( t) \right\|_{{\bf L}^2(\Omega)} \le \left\| \frac{\partial \widetilde v_k}{\partial t}  \right\|_{L^{\infty} (0, \tau'; {\bf L}^2(\Omega)) } \quad \hbox{ for all $t \in [0, \tau']$}
\end{eqnarray*}
(see lemma 1.4 page 263 in \cite{temam}). Now, let $k \ge 0$. For all $t \in [0,\tau']$ we define $f_k(t) \in {\bf H}^{-1}(\Omega)$  by
\begin{eqnarray*}
\begin{array}{ll}
\displaystyle \bigl\langle f_k (t) , \varphi \bigr\rangle_{{\bf H}^{-1} (\Omega), {\bf H}^1_0 (\Omega) }  = \left( \frac{\partial \widetilde v_k}{\partial t} ( t) + v^0 \frac{\partial \zeta}{\partial t} (t) , \varphi \right)  + a \bigl( \widetilde v_k (t) + v^0 \zeta (t) , \varphi \bigr) \\
\displaystyle  - \bigl(f (t), \varphi \bigr) \quad \forall \varphi \in {\bf H}^1_0 (\Omega).
\end{array}
\end{eqnarray*}
Then, consider now $\varphi \in {\bf H}^1_{0 div} ( \Omega) $. With (\ref{NS-25-k}), we obtain that
\begin{eqnarray*}
\int_0^{\tau'} \bigl\langle f_k (t) , \varphi \bigr\rangle_{{\bf H}^{-1} (\Omega), {\bf H}^1_0 (\Omega) } \chi (t) \, dt = 0
\quad \forall \chi \in {\cal D} (0, \tau').
\end{eqnarray*}
So 
\begin{eqnarray*}
 \bigl\langle f_k (t) , \varphi \bigr\rangle_{{\bf H}^{-1} (\Omega), {\bf H}^1_0 (\Omega) } = 0 \quad \hbox{a.e. in $(0, \tau')$}
 \end{eqnarray*}
and, using the continuity of the mapping $t \mapsto  \bigl\langle f_k (t) , \varphi \bigr\rangle_{{\bf H}^{-1} (\Omega), {\bf H}^1_0 (\Omega) }$ on $[0, \tau']$, we infer that the previous equality is valid for all $t \in [0, \tau']$. It follows that there exists a mapping $\widetilde p_k : [0, \tau'] \to L^2_0 (\Omega)$ such that, for all $t \in [0, \tau']$ 
\begin{eqnarray*}
 \bigl\langle f_k (t) , \varphi \bigr\rangle_{{\bf H}^{-1} (\Omega), {\bf H}^1_0 (\Omega) } = \bigl\langle \nabla \widetilde p_k (t) , \varphi \bigr\rangle_{{\cal D}'(\Omega), {\cal D}(\Omega)}
 \quad \forall \varphi \in {\cal D}(\Omega).
 \end{eqnarray*}
 But, for all $t \in [0, \tau']$, we have $\widetilde p_k (t) \in L^2_0(\Omega)$ and thus
 \begin{eqnarray*}
 \begin{array}{ll}
\displaystyle  \bigl\langle \nabla \widetilde p_k (t) , \varphi \bigr\rangle_{{\cal D}'(\Omega), {\cal D}(\Omega)} = - \bigl\langle \widetilde p_k (t) , div (\varphi) \bigr\rangle_{{\cal D}'(\Omega), {\cal D}(\Omega)} \\
 \displaystyle = - \bigl( \widetilde p_k (t) , div (\varphi) \bigr) \quad  \forall \varphi \in {\cal D}(\Omega).
\end{array}
 \end{eqnarray*}
It follows that, for all $t \in [0,\tau']$, 
\begin{eqnarray*}
\begin{array}{ll}
\displaystyle - \bigl( \widetilde p_k (t) , div (\varphi) \bigr) = \left( \frac{\partial \widetilde v_k}{\partial t} ( t) + v^0 \frac{\partial \zeta}{\partial t} (t) , \varphi \right)  + a \bigl( \widetilde v_k (t) + v^0 \zeta (t) , \varphi) \\
\displaystyle  - \bigl(f (t), \varphi \bigr) \quad  \forall \varphi \in {\cal D}(\Omega)
\end{array}
\end{eqnarray*}
and by density of ${\cal D}(\Omega)$ into ${\bf H}^1_0(\Omega)$, the same equality is valid for all $\varphi \in {\bf H}^1_0(\Omega)$.
With the same arguments as in Theorem \ref{tresca-existence} and Theorem \ref{tresca-regularity}, we obtain also that 
$\widetilde p_k \in L^{\infty} \bigl(0, \tau'; L^2_0(\Omega) \bigr)$
and $p_k = \widetilde p_k$ in $L^{\infty} \bigl(0, \tau'; L^2_0(\Omega) \bigr)$. Thus possibly modifying $p_k$ on a negligible subset of $(0, \tau')$ we have
\begin{eqnarray} \label{final3bis}
\begin{array}{ll}
\displaystyle - \bigl(  p_k (t) , div (\varphi) \bigr) = \left( \frac{\partial \widetilde v_k}{\partial t} ( t) + v^0 \frac{\partial \zeta}{\partial t} (t), \varphi \right)  + a \bigl( \widetilde v_k (t) + v^0 \zeta (t) , \varphi \bigr) \\
\displaystyle  - \bigl(f (t), \varphi \bigr) \quad  \forall \varphi \in {\bf H}^1_0 (\Omega), \quad \forall t \in [0, \tau'].
\end{array}
\end{eqnarray}
Similarly, possibly modifying $p_*$ on a negligible subset of $(0, \tau')$, we have 
\begin{eqnarray} \label{final3ter}
\begin{array}{ll}
\displaystyle - \bigl(  p_* (t) , div (\varphi) \bigr) = \left( \frac{\partial \widetilde v_*}{\partial t} ( t) + v^0 \frac{\partial \zeta}{\partial t} (t), \varphi \right)  + a \bigl( \widetilde v_* (t) + v^0 \zeta (t) , \varphi \bigr) \\
\displaystyle  - \bigl(f (t), \varphi \bigr) \quad  \forall \varphi \in {\bf H}^1_0 (\Omega), \quad \forall t \in [0, \tau'].
\end{array}
\end{eqnarray}

\bigskip

Now let $\tilde w \in L^2(\Omega)$ and $w \in L^2_0 (\Omega)$ be given by
\begin{eqnarray*}
w= \tilde w - \frac{1}{|\Omega|} \int_{\Omega} \tilde w \, dx  .
\end{eqnarray*}
For all $k \ge 0$ and for all $t \in [0, \tau']$ we have
\begin{eqnarray*}
\begin{array}{ll}
\displaystyle  \bigl(  p_k (t) - p_* (t)  , \tilde w \bigr) =  \bigl(  p_k (t) - p_* (t)  ,  w \bigr)
 \\
 \displaystyle
 = 
 -  \left( \frac{\partial \widetilde v_k}{\partial t} (\cdot, t) -\frac{\partial \widetilde v_*}{\partial t} (\cdot, t), P(w) \right)   - a \bigl( \widetilde v_k (t) -\widetilde v_* (t)  , P(w) \bigr) 
%
 \end{array}
 \end{eqnarray*}
 where $P$ is the linear continuous operator from $ L^2_0(\Omega)$ to $ {\bf H}^1_0(\Omega)$ such that $div \bigl( P(w) \bigr) = w $ for all $w \in L^2_0(\Omega)$ (\cite{girault}).
With (\ref{final1}) and (\ref{final2})-(\ref{final2bis}) we get
\begin{eqnarray*}
 \int_{\Omega} \bigl( p_k (t) - p_* (t) \bigr) \tilde w  \, dx \to 0 \quad \hbox{ for all $\tilde w \in L^2(\Omega)$, for all $t \in (0, \tau') \setminus (A \cup A')$}
 \end{eqnarray*}
 which implies that
 \begin{eqnarray} \label{final3bis}
p_k(t) \rightharpoonup p_* (t) \quad \hbox{ weakly in $L^2(\Omega)$, for all $t \in (0, \tau') \setminus (A \cup A') $.}
\end{eqnarray}
 Then with (\ref{final1}), (\ref{final2})-(\ref{final2bis}) and (\ref{final3bis}) we may conclude that
\begin{eqnarray*}
\sigma_k (t) \rightharpoonup \sigma_* (t) \quad \hbox{ weakly in ${\bf E}(\Omega)$, for all $t \in (0, \tau') \setminus (A \cup A')$.}
\end{eqnarray*}


 \bigskip

 By using the definition of ${\cal R}$, we obtain that
 \begin{eqnarray*}
 \begin{array}{ll}
\displaystyle  {\cal R} \bigl( \sigma_k^3 (\cdot, t)  \bigr) (x')  = \bigl\langle \gamma_n \bigl(  \sigma_k^3 (t) \bigr) , f_{x'} \bigr\rangle_{H^{-1/2} (\partial \Omega), H^{1/2} (\partial \Omega)} \\
 \displaystyle \to {\cal R} \bigl( \sigma_*^3 (\cdot, t) \bigr) (x') \quad \hbox{ for all $x' \in \Gamma_0$, for all $t \in (0, \tau') \setminus (A \cup A')$.}
 \end{array}
 \end{eqnarray*}
 Moreover, for all $k \ge 0$, 
 \begin{eqnarray*}
\bigl|  {\cal R} \bigl( \sigma_k^3 (\cdot, t) \bigr) (x')  \bigr| \le C_{{\cal R}} \bigl\| \sigma_k^3 ( \cdot, t) \bigr\|_{E(\Omega)} \le C_{{\cal R}} \| \sigma_k^3  \|_{L^{\infty} (0, \tau'; E(\Omega))}
\end{eqnarray*}
 for all $x' \in \Gamma_0$, for almost every $t \in [0, \tau']$,  i.e. there exists a negligible subset $A_k$ of $(0, \tau')$ such that 
  \begin{eqnarray*}
\bigl|  {\cal R} \bigl( \sigma_k^3 (\cdot, t) \bigr) (x')  \bigr| \le C_{{\cal R}} \bigl\| \sigma_k^3 ( \cdot, t) \bigr\|_{E(\Omega)} \le C_{{\cal R}} \| \sigma_k^3  \|_{L^{\infty} (0, \tau'; E(\Omega))}
\end{eqnarray*}
 for all $x' \in \Gamma_0$, for all $t \in (0, \tau') \setminus A_k$. Let $\displaystyle A'' = A \cup A' \cup \bigl( \cup_{k \ge 0} A_k \bigr)$.
 By applying Lebesgue's dominated theorem, we get
 \begin{eqnarray*}
\bigl| {\cal R} ( \sigma_k^3 ) (\cdot,t) \bigr| \to \bigl| {\cal R} ( \sigma_*^3) (\cdot,t) \bigr| \quad \hbox{ strongly in  $L^2( \Gamma_0)$, for all $t \in (0, \tau') \setminus A''$}
 \end{eqnarray*}

 Thus, observing that $A''$ is also a negligible subset of $(0, \tau')$, we may apply twice Lebesgue's dominated theorem and we get
 \begin{eqnarray*}
 \begin{array}{ll}
\displaystyle \int_0^t S(t-s) \bigl| {\cal R} ( \sigma_k^3 ) (\cdot,s) \bigr| \, ds  \to \int_0^t S (t-s) \bigl| {\cal R} ( \sigma_*^3) (\cdot,s) \bigr| \, ds \\
\displaystyle \qquad \qquad \hbox{ strongly in  $L^2( \Gamma_0)$, for all $t \in [0, \tau']$}
\end{array}
 \end{eqnarray*}
and
\begin{eqnarray*}
\begin{array}{ll}
\displaystyle \int_0^{\star} S(\star -s) \bigl| {\cal R} ( \sigma_k^3 ) (\cdot,s) \bigr| \, ds 
\to \int_0^{\star} S(\star -s) \bigl| {\cal R} ( \sigma_*^3 ) (\cdot,s) \bigr| \\
\displaystyle \qquad  \quad \hbox{ strongly in $L^2\bigl( 0, \tau', L^2(\Gamma_0) \bigr)$}
\end{array}
\end{eqnarray*}
which allows us to conclude.

\end{proof}

Gathering all the previous results we have  obtained that $({\widetilde v}_*, p_*)$ is a solution of problem $(P)$ on $[0, \tau']$ and $({\widetilde v}_*, p_*)$ is an extension of $(\bar v, \bar p)$ to $[\tau, \tau']$. Indeed, $\ell_k = \ell_0 ={\cal F} (\cdot, \cdot, \bar \sigma)$ on $(0, \tau)$ for all $k \ge 0$, thus $\ell_* = {\cal F} (\cdot, \cdot, \bar \sigma)$ on $(0, \tau)$. Moreover $\tau' - \tau$ is independent of $\tau$. Thus, starting from $\tau =0$, we may conclude with a  finite induction argument that

\begin{theorem} \label {coulomb-existence}
Let assumptions (\ref{zeta})-(\ref{v0-1})-(\ref{v0})-(\ref{ell})-(\ref{coulomb.2})-(\ref{coulomb.2bis}) hold. Then, the non-local friction problem $(P)$ admits a solution $({\widetilde v}, p)$ such that 
\begin{eqnarray*}
\widetilde v \in W^{1,\infty} \bigl( 0,T; {\bf L}^2 (\Omega) \bigr) \cap W^{1,2} (0,T; {\cal V}_{0 div}), \quad p \in L^{\infty} \bigl(0,T; L^2_0 (\Omega) \bigr)
\end{eqnarray*}
and
\begin{eqnarray*}
 \frac{\partial^2 \widetilde v}{\partial t^2} \in  L^2 \bigl(0,T; \bigl( {\bf H}^1_{0 div}(\Omega) \bigr)' \bigr).
\end{eqnarray*}
\end{theorem}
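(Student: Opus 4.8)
The plan is to obtain existence by a time-continuation argument resting on the successive-approximation scheme developed just above. The structural observation is that, \emph{if} problem $(P)$ already admits a solution $(\bar v,\bar p)$ on some interval $[0,\tau]$ with $0\le\tau<T$, having $\bar\sigma=-\bar p\,{\rm Id}+2\mu D(\bar v+v^0\zeta)\in L^\infty\bigl(0,T;(L^2(\Omega))^{3\times 3}\bigr)$ and $div(\bar\sigma)\in L^\infty\bigl(0,T;{\bf L}^2(\Omega)\bigr)$, then it can be continued to $[0,\tau']$ where the step $\tau'-\tau$ depends only on the data $\mu,\zeta,f,v^0$ — through the constant $C'_{\rm data}$ of Proposition \ref{estimation} — and is therefore bounded below by a fixed positive number. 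Since the case $\tau=0$ is immediate (there $\ell_0={\cal F}^0$ and $\widetilde v^0=0$), a finite number of such continuation steps covers $[0,T]$, and the solution produced inherits on each subinterval the regularity of Theorem \ref{tresca-regularity} and Proposition \ref{proposition1}, which is exactly the regularity asserted.

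Concretely, the first task is to carry out one continuation step. Given $(\bar v,\bar p)$ on $[0,\tau]$ I set $\ell_0(x',t)={\cal F}(x',t,\bar\sigma)$ for $t\in[0,\tau]$ and freeze the history integral at its value at $t=\tau$ for $t\in[\tau,T]$; assumptions (\ref{coulomb.2})--(\ref{coulomb.2bis}) ensure $\ell_0\in W^{1,2}\bigl(0,T;L^2(\Gamma_0;\br^+)\bigr)$. I then run the iteration $\ell_k\mapsto(\widetilde v_k,p_k)\mapsto\sigma_k\mapsto\ell_{k+1}={\cal F}(\cdot,\cdot,\sigma_k)$ via Theorem \ref{tresca-existence}, Theorem \ref{tresca-regularity} and Proposition \ref{proposition1}, invoke Proposition \ref{estimation}, and fix $\tau'>\tau$ so that $C'_{\rm data}\bigl(\sqrt{\tau'-\tau}+(\tau'-\tau)^{(p-2)/(2p)}\bigr)\le 1/2$. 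An immediate induction then yields the uniform bound $\|\widetilde\ell_k\|_{W^{1,2}(0,\tau';L^2(\Gamma_0))}\le 2C'_\tau$, and hence uniform bounds on $\widetilde v_k$, $\frac{\partial^2\widetilde v_k}{\partial t^2}$, $p_k$, $\sigma_k$, $div(\sigma_k)$ in the spaces of Theorem \ref{tresca-regularity}. Extracting weak-$*$, weak and (through Aubin's, Simon's and Helly's lemmas) strong subsequential limits, I pass to the limit $k\to+\infty$ in (\ref{NS-25-k})--(\ref{NS-25init-k}): the friction term converges because $\widetilde v_k\to\widetilde v_*$ strongly in $L^2\bigl(0,\tau';{\bf L}^2(\Gamma_0)\bigr)$ while $\widetilde\ell_k\rightharpoonup\widetilde\ell_*$ weakly in $L^2\bigl(0,\tau';L^2(\Gamma_0)\bigr)$, so $(\widetilde v_*,p_*)$ solves $(P)$ on $[0,\tau']$ with threshold $\ell_*={\cal F}^0+\widetilde\ell_*$, and an adaptation of Proposition \ref{proposition1} gives the regularity of $\sigma_*$.

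The main obstacle is Lemma \ref{ell*}: showing that $\ell_*={\cal F}(\cdot,\cdot,\sigma_*)$, i.e. that the nonlinear, history-dependent threshold map is closed under this limit. The route I would follow is to subtract the two weak identities satisfied by $\sigma_k$ and $\sigma_*$, obtaining $\|div(\sigma_k-\sigma_*)\|_{L^2(0,\tau';{\bf L}^2(\Omega))}=\|\frac{\partial}{\partial t}(\widetilde v_k-\widetilde v_*)\|_{L^2(0,\tau';{\bf L}^2(\Omega))}\to 0$; then to combine this with Helly's theorem for $(\widetilde v_k)$ bounded in $W^{1,2}(0,\tau';{\cal V}_{0 div})$ and a pointwise-in-time recovery of the pressure via the lifting operator $P$ of \cite{girault}, so that $\sigma_k(t)\rightharpoonup\sigma_*(t)$ weakly in $E(\Omega)$, whence ${\cal R}(\sigma_k^3(\cdot,t))(x')\to{\cal R}(\sigma_*^3(\cdot,t))(x')$ for a.e.\ $t$ and all $x'\in\Gamma_0$; and finally to apply Lebesgue's dominated convergence twice — using the uniform bound $|{\cal R}(\sigma_k^3(\cdot,t))|\le C_{\cal R}\|\sigma_k^3\|_{L^\infty(0,\tau';E(\Omega))}$ — to pass the $S$-convolution through the limit, first in $L^2(\Gamma_0)$ for each $t$ and then in $L^2\bigl(0,\tau';L^2(\Gamma_0)\bigr)$. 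With this closedness in hand, uniqueness for the Tresca problem (Theorem \ref{tresca-existence}) forces $\widetilde v_*=\bar v$, $p_*=\bar p$ on $(0,\tau)$ — since $\ell_*=\ell_0={\cal F}(\cdot,\cdot,\bar\sigma)$ there — so $(\widetilde v_*,p_*)$ is a genuine solution of the non-local Coulomb problem on $[0,\tau']$ extending $(\bar v,\bar p)$, and the finite induction on $\tau$ (with $\tau'-\tau$ independent of $\tau$) concludes. I expect the technical heart to be precisely upgrading the weak convergence of $\sigma_k$ to a convergence strong enough to control the modulus $|{\cal R}(\sigma_k^3)|$ inside the memory kernel.
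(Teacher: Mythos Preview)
Your proposal is correct and follows essentially the same route as the paper: the time-continuation via the successive-approximation scheme, the choice of $\tau'$ through Proposition \ref{estimation}, the compactness package (Aubin, Simon, Helly), and in particular your outline of Lemma \ref{ell*}---subtracting the weak identities to get strong convergence of $div(\sigma_k)$, recovering pointwise-in-$t$ weak convergence of $\sigma_k$ in $E(\Omega)$ via Helly plus the pressure lifting, and then two applications of dominated convergence for the memory term---match the paper's argument step for step. The one point you leave implicit is the easy induction showing $\ell_k=\ell_0$ on $(0,\tau)$ for all $k$ (hence $\ell_*=\ell_0$ there), which the paper establishes before passing to the limit rather than after.
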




\bigskip
\bigskip

\bibliographystyle{elsarticle-num}


\end{document}